\numberwithin{equation}{section}
\theoremstyle{plain}
\numberwithin{equation}{section}
\newtheorem{theorem}{Theorem}[section]
\newtheorem{lemma}[theorem]{Lemma}
\newtheorem{proposition}[theorem]{Proposition}
\newtheorem{assumption}[theorem]{Assumption}
\newenvironment{remark}[1][Remark]{\begin{trivlist}
\item[\hskip \labelsep {\bfseries #1}]}{\end{trivlist}}
\newcommand{\ignore}[1]{}
\begin{document}

\newcommand*{\tran}{^{\mkern-1.5mu\mathsf{T}}}
\newcommand{\reff}[1]{(\ref{#1})}
\newcommand{\sF}{\mathcal{F}}
\newcommand{\sG}{\mathcal{G}}
\newcommand{\sK}{\mathcal{K}}
\newcommand{\sL}{\mathcal{L}}
\newcommand{\sI}{\mathcal{I}}
\newcommand{\sT}{\mathcal{T}}
\newcommand{\sN}{\mathcal{N}}
\newcommand{\sH}{\mathcal{H}}
\newcommand{\IE}{\mathbb{E}}
\newcommand{\IZ}{\mathbb{Z}}
\newcommand{\IN}{\mathbb{N}}
\newcommand{\IR}{\mathbb{R}}
\newcommand{\Ii}{\mathbf{1}}
\newcommand{\IP}{\mathbb{P}}

\def\argmin{\mathop{\rm argmin}}
\def\argmax{\mathop{\rm argmax}}

\begin{frontmatter}

\title{Simultaneous inference for time-varying models}
\runtitle{Simultaneous inference for time-varying models}

\begin{aug}

  \author{\fnms{Sayar} \snm{Karmakar$^{1}$}\ead[label=e1]{sayarkarmakar@ufl.edu}},
  \author{\fnms{Stefan}  \snm{Richter$^{2}$}\ead[label=e2]{stefan.richter@iwr.uni-heidelberg.de}}
  \and
  \author{\fnms{Wei Biao}  \snm{Wu$^{3}$}\ead[label=e3]{wbwu@galton.uchicago.edu}}

  \runauthor{S. Karmakar et al.}

  \affiliation{University of Florida, Heidelberg University and University of Chicago}

\end{aug}

\begin{abstract}

A general class of non-stationary time series is considered in this paper. We estimate the time-varying coefficients by using local linear M-estimation. For these estimators, weak Bahadur representations are obtained and are used to construct simultaneous confidence bands. For practical implementation, we propose a bootstrap based method to circumvent the slow logarithmic convergence of the theoretical simultaneous bands. Our results substantially generalize and unify the treatments for several time-varying regression and auto-regression models. The performance for tvARCH and tvGARCH models is studied in simulations and a few real-life applications of our study are presented through the analysis of some popular financial datasets.
\end{abstract}

\begin{keyword}
\kwd{Time-varying regression}
\kwd{Time-series models} 
\kwd{Generalized linear models}
\kwd{Simultaneous confidence band} 
\kwd{Gaussian approximation}
\kwd{Bootstrap}
\end{keyword}

\end{frontmatter}

\section{Introduction}\label{sec:introduction}

Time-varying dynamical systems have been studied extensively in the literature of statistics, economics and related fields. For stochastic processes observed over a long time horizon, stationarity is often an over-simplified assumption that ignores systematic deviations of parameters from constancy. For example, in the context of financial datasets, empirical evidence shows that external factors such as war, terrorist attacks, economic crisis, some political event etc. introduce such parameter inconstancy. As \citet{bai97} points out, `failure to take into account parameter changes, given their presence, may lead to incorrect policy implications and predictions'. Thus functional estimation of unknown parameter curves using time-varying models has become an important research topic recently. In this paper, we propose a general setting for simultaneous inference of local linear M-estimators in semi-parametric time-varying models. Our formulation is general enough to allow unifying time-varying models from the usual linear regression, generalized regression and several auto-regression type models together. Before discussing our new contributions in this paper, we provide a brief overview of some previous works in these areas. 

In the regression context, time-varying models are discussed over the past two decades to describe non-constant relationships between the response and the predictors; see, for instance, \citet{fan99}, \citet{fan2000}, \citet{hoover98}, \citet{huang04}, \citet{lin01}, \citet{ramsay05}, \citet{zhang02} among others.  Consider the following two regression models
$$\text{Model I: } y_i = x_i\tran \theta_i +e_i, \quad \text{Model II: } y_i = x_i\tran\theta_0 + e_i, \quad\quad i = 1, \ldots ,n,$$

\noindent where $x_i \in \IR^d$ ($i = 1,\ldots,n$) are the covariates, $\tran$ is the transpose, $\theta_0$ and $\theta_i = \theta(i/n)$ are the regression coefficients. Here, $\theta_0\in\IR^d$ is a constant parameter and $\theta : [0, 1] \to  \mathbb{R}^d$ is a smooth function. Estimation of $\theta(\cdot)$ has been considered by \citet{hoover98}, \citet{cai07}) and \citet{zhouwu10} among others. Hypothesis testing is widely used to choose between model I and model II, see, for instance, \citet{regression2012}, \citet{regression2015}, \citet{chow60}, \citet{brown75}, \citet{nabeya88},  \citet{leybourne89}, \citet{nyblom89}, \citet{ploberger89}, \citet{andrews93} and \citet{lin99}. \citet{zhouwu10} discussed obtaining  simultaneous confidence bands (SCB) in model I, i.e. with additive errors. However their treatment is heavily based on the closed-form solution and it does not extend to processes defined by a more general recursion. Little has been known for time-varying models in this direction previously. 

The results from time-varying linear regression can be naturally extended to time-varying AR, MA or ARMA processes. However, such an extension is not obvious for conditional heteroscedastic (CH) models. These are difficult to estimate but also often more useful in analyzing and predicting financial datasets. Since \citet{engle82} introduced the classical ARCH model and \citet{bollerslev} extended it to a more general GARCH model, these have remained primary tools for analyzing and forecasting certain trends for stock market datasets. As the market is vulnerable to frequent changes, non-uniformity across time is a natural phenomenon. The necessity of extending these classical models to a set-up where the parameters can change across time has been pointed out in several references; for example \citet{stuaricua2005}, \citet{engle05} and \citet{fry08}. Towards time-varying parameter models in the CH setting, numerous works discussed the CUSUM-type procedure, for instance, \citet{kim00} for testing for changes in the parameters of a  GARCH(1,1) time series. \citet{kulperger05} studied the high moment partial sum process based on residuals and applied it to residual CUSUM tests in GARCH models. Interested readers can find some more change–point detection results in the context of CH models in \citet{chu95}, \citet{chen97}, \citet{linbook99}, \citet{kokoszka00} or \citet{andreou06}.

Historically in the analysis of financial datasets, the common practice to account for the time-varying nature of the parameter curves was to transfer a stationary tool/method in some ad hoc way. For example, in \citet{mikosch2004}, the authors analyzed S\&P500 data from 1953-1990 and suggested that time-varying parameters are more suitable due to such a long time-horizon. They re-estimated the parameters for every block of 100 sample points and to account for the abrupt fluctuation of the coefficients, they generated re-estimates of parameters for samples of size $100,200, \ldots.$ This treatment suffers from different degree of reliability of the estimators at different parts of the time horizon. There are examples outside the analysis of economic datasets, where similar approach of splitting the time-horizon has been adapted to fit CH type models. For example, in \citet{giaco12}, the authors analyzed Italian mortality rates from 1960-2003 using an AR(1)-ARCH(1) model and observed abrupt behavior of yearwise coefficients. Our framework can simultaneously capture these models and provide significant improvements over such heuristic treatments.

A time-varying framework and a pointwise curve estimation using M-estimators for locally stationary ARCH models was provided by \citet{dahlhaussubbarao2006}. Since then, while several pointwise approaches were discussed in the tvARMA and tvARCH case (cf. \citet{dahlhaus2009}, \citet{dahlhaussubbarao2006}, \citet{fry08}), pointwise theoretical results for estimation in tvGARCH processes were discussed in \citet{tvgarch2013} and \citet{rohan13} for GARCH(1,1) and GARCH($p$,$q$) models. Even though the conditional heteroscedastic model remained widely popular in analyzing many different types of econometric data, the topic of simultaneous inference in this field remains relatively untouched. Consider the simple tvARCH(1) model
$$X_i= \sigma_i\zeta_i,\quad  \zeta_i \sim N(0,1),\quad  \sigma_i^2= \alpha_0(i/n)+\alpha_1(i/n)X_{i-1}^2.$$
Typically, it is considered that for large number of realizations, the corresponding parameters $\alpha_0, \alpha_1$ vary smoothly over time and can be modeled as smooth functions $\alpha_0,\alpha_1:[0,1] \to \IR$. Pointwise confidence bands for these functions do not help to infer about their overall pattern (like testing for constancy or some specific parametric form). While one remedy could be to subjectively assume a certain class of functions for $\alpha_0,\alpha_1$ such as linear or polynomial and perform a hypothesis test, this can be problematic for many real life datasets. See for example the intercept function for the USGBP analysis in Section \ref{sec:simureal}. We rather take an objective approach where we do \emph{not} assume any parametric form as such and wish to establish valid simultaneous inference. In this paper, we therefore derive \emph{simultaneous} confidence bands which cover $\alpha_0, \alpha_1$ over the \emph{whole time interval} $t \in (0,1)$ with a given confidence. After construction, one can perform many hypothesis tests such as time-constancy, linearity etc. in one go.
To the best of our knowledge, no theoretical results for simultaneous confidence intervals for nonstationary time series were derived before this work.

We next summarize our contributions in this paper.
We use Bahadur representations, a Gaussian approximation theorem from \citet{zhouwu09} and extreme value theory for Gaussian processes to obtain simultaneous confidence bands for contrasts of  parameter curves in very general time-varying models. These intervals provide a generalization from testing parameter constancy to testing any particular parametric form such as linear, quadratic, exponential etc. To deal with bias expansions, we use a theory for locally stationary processes which was recently formalized in \citet{dahlhaus2017}. 

Moving on to some practical applicability of our results, we show how our result applies to time-varying ARCH and GARCH models. 
For tv(G)ARCH models, we improve the existing conditions in \cite{subbarao2008} (we only need that the innovation process has $4+a$ moments for some $a > 0$ compared to 8 moments needed therein) for constructing  confidence intervals and provide \emph{simultaneous} instead of pointwise confidence intervals. We provide an empirical justification of how the coverage can be significantly improved by a wild bootstrap technique and use Gaussian approximation theory to theoretically establish it. Finally we also provide some data analysis and volatility forecasting. First we show for numerous real-life datasets that the time-varying fit does better than the time-constant ones in short-range forecasts. This  underlines the importance to decide whether a constant or a time-varying model should be used. One interesting find from our analysis is that  simultaneous inference can lead to models where a subset is time-varying and these semi-time varying model can sometimes achieve both statistical confidence and better forecasting ability.




The rest of the article is organized as follows. In Section \ref{sec:model}, we state two specific classes of time series models and the related assumptions. For the sake of better focus and readability, we decided to narrow down the scope of the paper to these specific models. However our theoretical results of M-estimation and the SCBs allow to treat much more general models. The more general assumptions are given in the Appendix (cf. Assumption \ref{ass1} therein). In Section \ref{sec:main results} we provide our main results, namely a Bahadur representation of the estimators of the parameter functions and a SCB result for the related contrasts. Section \ref{sec:implementation} is dedicated to practical issues which arise when using the SCBs, like estimation of the dispersion matrix of the estimator, bandwidth selection and a wild Bootstrap procedure to overcome the slow logarithmic convergence from the theoretical SCB.
Some summarized simulation studies and real data applications  can be found in Section \ref{sec:simureal}. The proofs of the main results are deferred to Appendix, while the proof of several more elementary lemmata and a more general assumption set for tvGARCH processes can be found in the Supplementary material.

\section{Model assumptions and estimators}\label{sec:model}

\subsection{The model}\label{ssc:themdoel}
Suppose that $\zeta_i$, $i\in\IZ$ is a sequence of i.i.d. random variables. We consider the following two time series models. In both cases, $\Theta$ denotes a parameter space specified below in Section \ref{sec:assum_new}.
\begin{itemize}
    \item Case 1: Recursively defined time series. Suppose that for $i,\ldots,n$,
\begin{equation}
    Y_i = \mu(Y_{i-1},...,Y_{i-p}, \theta(i/n)) + \sigma(Y_{i-1},...,Y_{i-p}, \theta(i/n)) \zeta_i,\label{eq:tvrec_model}
\end{equation}
where $\theta = (\alpha_1,\ldots,\alpha_k,\beta_0,\ldots,\beta_l)\tran:[0,1]\to \Theta \subset \IR^{k+l+1}$ and
\begin{eqnarray*}
    \mu(x,\theta) := \sum_{i=1}^{k}\alpha_i m_i(x),\quad\quad \sigma(x,\theta) := \big(\sum_{i=0}^{l}\beta_i \nu_i(x)\big)^{1/2},
\end{eqnarray*}
with some functions $m_i:\IR^p \to \IR$, $\nu_i:\IR^p \to \IR_{\ge 0}$. Put $X_i^c = (Y_{i-1},\ldots,Y_{1 \vee (i-p)},0,\ldots)\tran$.

This model covers, for instance, tvARMA and tvARCH models.
    \item Case 2: tvGARCH. For $i = 1,\ldots,n$, consider the recursion
\begin{eqnarray*}
    Y_i &=& \sigma_i^2 \zeta_i^2,\\
    \sigma_i^2 &=& \alpha_0(i/n) + \sum_{j=1}^{m}\alpha_j(i/n) Y_{i-j} + \sum_{j=1}^{l}\beta_{j}(i/n)\sigma_{i-j}^2,
\end{eqnarray*}
where $\theta = (\alpha_0,\ldots,\alpha_m,\beta_1,\ldots,\beta_l) : [0,1] \to \Theta \subset \IR^{m+l+1}$. Put $X_i^c := (Y_{i-1},...,Y_{1},0,0,...)$.

\end{itemize}

\noindent Case 1 does not directly cover the tvGARCH model, we therefore operate with it separately as Case 2 throughout the paper. In either case, our goal is to estimate $\theta(\cdot)$ from the observations $Z_i^c = (Y_i, X_i^c)$, $i = 1,\ldots,n$.

\subsection{The estimator}\label{section_estimator} In this paper, we focus on local M-estimation: Let $K(\cdot)\in \sK$, where 
$\sK$ is the family of non-negative symmetric kernels with support $[-1,1]$ which are continuously differentiable on $[-1,1]$ such that $\int_{-1}^{1} |K'(u)|^2 d u > 0$. We consider as objective function $\ell(z,\theta)$ the negative conditional Gaussian likelihood. This reads
\begin{itemize}
    \item in Case 1:
    \[
    \ell(y,x,\theta) = \frac{1}{2}\Big[ \Big(\frac{y-\mu(x,\theta)}{\sigma(x,\theta)}\Big)^2 + \log \sigma(x,\theta)^2\Big],
\]
    \item in Case 2:
    \[
    \ell(y,x,\theta)=\frac{1}{2}\Big[ \frac{y}{\sigma(x,\theta)^2}+\log(\sigma(x,\theta)^2)\Big],
\]
where here, $\sigma(x,\theta)^2$ is recursively defined via $\sigma(x,\theta)^2 = \alpha_0 + \sum_{j=1}^{m}\alpha_j x_j + \sum_{j=1}^{l}\beta_j \sigma(x_{j\rightarrow},\theta)^2$ and $x_{j\rightarrow}:= (x_{j+1},x_{j+2},\ldots)$.
\end{itemize}
For some bandwidth $b_n > 0$, define the local linear likelihood function
\begin{equation}
    L_{n,b_n}^c(t,\theta,\theta') := (n b_n)^{-1}\sum_{i=1}^{n}K_{b_n}(t-i/n) \ell(Z_i^c, \theta + \theta'\cdot  (i/n-t)),\label{eq:likelihood}
\end{equation}
where $K_{b_n}(\cdot) := K(\cdot/b_n)$. Let $\Theta' := [-R,R]^k$ with some $R > 0$. A local linear estimator of $\theta(t)$, $\theta'(t)$ is given by
\begin{equation}
    (\hat \theta_{b_n}(t),\widehat{\theta'}_{b_n}(t)) = \argmin_{(\theta,\theta') \in \Theta\times\Theta'} L_{n,b_n}^c(t,\theta,\theta'), \quad\quad t \in [0,1].\label{eq:estimator}
\end{equation}

\begin{remark}
As defined above, we consider a quite  specific form of the objective function $\ell$. In the Appendix, we allow $\ell$ to be much more general. Basically, it has to be twice continuously differentiable and 'compatible' with the time series model. A referee asked if also the  differentiability assumption on $\ell$ might be relaxed. A relaxation might be possible by using sharper and more recent Gaussian approximation results from \cite{karmakarsinica} and empirical process results for dependent data. However, this would significantly increase the complexity on the assumptions on $\ell$, since its smoothness is used for several completely different key steps in the proofs, such as Bahadur representations, a bias expansion and the quantification of the underlying dependence.
\end{remark}

\subsection{Assumptions}\label{sec:assum_new}

For our main results, we need the following assumptions on our time series models.

\begin{assumption}[Case 1]\label{ass:case1} Assume that
\begin{enumerate}
    \item $\zeta_i$ are i.i.d. with $\IE \zeta_i = 0$, $\IE \zeta_i^2 = 1$ and for some $a > 0$, $\IE |\zeta_i|^{(2+a)M} < \infty$. Here, $M=3$. In the special case $\sigma(x,\theta)^2 \equiv \beta_0$, one can choose $M = 2$.
    \item For all $t \in [0,1]$, the sets
    \[
        \{m_1(\tilde X_0(t)), \ldots, m_k(\tilde X_0(t))\}, \quad\quad \{\nu_0(\tilde X_0(t)), \ldots, \nu_l(\tilde X_0(t))\}
    \]
    are (separately) linearly independent in $L^2(\IP)$.
    \item There exist $(\kappa_{ij}) \in \IR_{\ge 0}^{k\times p}$, $(\rho_{ij}) \in \IR_{\ge 0}^{(l+1)\times p}$ such that for all $i$:
    \begin{equation}
        \sup_{x\not=x'}\frac{|m_i(x) - m_i(x')|}{|x-x'|_{\kappa_{i\cdot},1}} \le 1, \quad\quad  \sup_{x\not=x'}\frac{|\sqrt{\nu_i(x)} - \sqrt{\nu_i(x')}|}{|x-x'|_{\rho_{i\cdot},1}} \le 1.\label{example:tvrec_eq1}
    \end{equation}
    Let $\nu_{min} > 0$ be some constant such that for all $x\in\IR$, $\nu_0(x) \ge \nu_{min}$. With some $\beta_{min} > 0$, choose $\tilde \Theta \subset \IR^{k} \times \IR_{\ge \beta_{min}}^{l+1}$ such that for all $\theta \in \tilde \Theta$,
    \begin{equation}
        \sum_{j=1}^{p}\Big(\sum_{i=1}^{k}|\alpha_i| \kappa_{ij} + \|\zeta_0\|_{2M}\cdot \sum_{i=0}^{l}\sqrt{\beta_i}\rho_{ij}\Big) < 1.\label{example:parameterconditions}
    \end{equation}
    \item $\Theta \subset \tilde \Theta$ is compact and for all $t \in [0,1]$, $\theta(t)$ lies in the interior of $\Theta$. Each component of $\theta(\cdot)$ is in $C^3([0,1])$.
\end{enumerate}
\end{assumption}

\noindent For the tvAR($k$) model (cf. \cite{richterdahlhaus2017}, Example 4.1), one may choose $p = k$, $m_1(x) = x_1$, ..., $m_k(x) = x_k$, $l = 0$, $\nu_0(x) = 1$, leading to the rather strong condition $\sum_{i=1}^{k}|\alpha_i| < 1$ in \reff{example:parameterconditions}. However, as it can be seen in the proof of Proposition \ref{example:tvrec} in the appendix, the condition \reff{example:parameterconditions} is only needed to guarantee the existence of the process and corresponding moments. By using techniques which are more specific to the model, one can obtain much less strict assumptions such as $\Theta$ being a compact subset of
\[
    \{\theta = (\alpha_1,...,\alpha_k,\beta_0) \in \IR^{k} \times (0,\infty): \alpha(z) = 1 + \sum_{i=1}^{k}\alpha_i z^i \text{ has only zeros outside the unit circle}\},
\]
cf. \cite{richterdahlhaus2017}, Example 4.1. In the tvARCH case, the above Assumption \ref{ass:case1} asks for $\IE |\zeta_1|^{6+a} < \infty$ with some $a > 0$. 

In the following, we consider Case 2, the tvGARCH model. In this specific model, the moment conditions can be relaxed to $\IE|\zeta_1|^{4+a} < \infty$. The tvGARCH model was for instance studied in the stationary case in \citet{garch2004}. More recently, pointwise asymptotic results were obtained in \citet{tvgarch2013}. For a matrix $A$, we define $\|A\|_q := (\|A_{ij}\|_q)_{ij}$ as a component-wise application of $\|\cdot\|_q$. For matrices $A,B$, let $A \otimes B$ denote the Kronecker product and 
\begin{equation}\label{eq:kronecker} 
A^{\otimes k}=A \otimes \ldots \otimes A    
\end{equation}
\noindent denote the $k$-fold Kronecker product. Let $\rho(A)$ denote the spectral norm of $A$.

\begin{assumption}[Case 2]\label{ass:case2}
 Let $ f(\theta) = (\alpha_1,\ldots,\alpha_m,\beta_1,\ldots,\beta_l)\tran$ and let $e_j = (0,\ldots,0,1,0,\ldots,0)\tran$ be the unit column vector with $j$th element being 1, $1 \le j \le l+m$. Define $M_i(\theta) = (f(\theta)\zeta_i^2, e_1,\ldots,e_{m-1},f(\theta),e_{m+1},\ldots,e_{m+l-1})\tran$. Let $\alpha_{min} > 0$ and $\tilde \Theta \subset \IR_{\ge \alpha_{min}}\times \IR_{>0}^{m+l}$ such that for all $\theta,\theta' \in \tilde \Theta$,
\begin{equation}
    \rho(\IE[ M_0(\theta)\otimes M_0(\theta')]) < 1.\label{ass:case2_parametercondition}
\end{equation}
Suppose that
\begin{enumerate}
    \item[(i)] $\Theta \subset \tilde \Theta$ is compact and for all $t \in [0,1]$, $\theta(t)$ lies in the interior of $\Theta$. Each component of $\theta(\cdot)$ is in $C^3[0,1]$,
    \item[(ii)] $\zeta_i$ are i.i.d. with $\IE \zeta_i = 0$, $\IE \zeta_i^2 = 1$ and $\IE |\zeta_i|^{4+a} < \infty$ with some $a > 0$.
\end{enumerate}
\end{assumption}

\noindent In the important GARCH(1,1) case, a straightforward calculation shows that the condition \reff{ass:case2_parametercondition} can be translated to 
\begin{equation}
    \rho(\IE[M_0(\theta)^{\otimes 2}]) < 1\quad \Longleftrightarrow\quad \beta_1^2 + 2 \alpha_1 \beta_1 + \alpha_1^2 \|\zeta_0\|_4^4 < 1 \label{eq:discussionmomentgarch}
\end{equation}
If $\zeta_0 \sim N(0,1)$, it holds that $\|\zeta_0\|_4^2 = \sqrt{3} \approx 1.73$. \citet{bollerslev} proved that stationary GARCH(1,1)  processes have 4th moments under the exact same condition (\ref{eq:discussionmomentgarch}). In Section \ref{sec:implementation}, Remark \ref{rem:garch} therein, we further talk about the applicability of \reff{eq:discussionmomentgarch}.

We \emph{conjecture} that also for general GARCH($l,m$) models, \reff{ass:case2_parametercondition} is equivalent to the condition
\[
    \text{ for all }\theta \in \tilde \Theta: \quad\quad \rho(\IE[M_0(\theta)^{\otimes 2}]) < 1,
\]
which would then exactly meet the condition from  \cite{bollerslev}. Note that estimation and the true curve $\theta(\cdot)$ lie in $\Theta$ which is has to be a compact subset of $\tilde \Theta$. Therefore, we automatically ask that \emph{all} parameters of the GARCH process are nonzero. Again, this condition could in principle be relaxed which would add a significant amount of technicalities.


\section{Main results}\label{sec:main results} We discuss the theoretical confidence band result in this section. 
We directly start with a weak Bahadur representation which plays a key role for introducing simultaneity.  For $l \ge 0$, define
\[
    \mu_{K,l} := \int K(x) x^l dx, \quad\quad\sigma_{K,l}^2 := \int K(x)^2 x^l dx.
\]
We now have to define some quantities $V(t), I(t), \Lambda(t)$ which are needed to provide the theoretical results. They correspond to the so-called (miss-specified) Fisher information matrices which occur naturally as variance of the M-estimators. These quantities need not to be known in practice because they are estimated. They depend on the so-called \emph{stationary approximation} $\tilde Y_i(t)$ of the considered time-varying process $Y_i$. In case 1 and case 2, this is given as follows: For $t \in [0,1]$,
\begin{itemize}
    \item $\tilde Y_i(t)$ is the solution of
    \[
        \tilde Y_i(t) = \mu(\tilde Y_{i-1}(t),...,\tilde Y_{i-p}(t),\theta(t)) + \sigma(\tilde Y_{i-1}(t),...,\tilde Y_{i-p}(t),\theta(t)),\quad i\in\IZ,
    \]
    \item $\tilde Y_i(t)$ is the solution of
    \begin{eqnarray*}
        \tilde Y_i(t) &=& \tilde\sigma_i(t)^2 \zeta_i^2,\\
    \tilde\sigma_i(t)^2 &=& \alpha_0(t) + \sum_{j=1}^{m}\alpha_j(t) \tilde Y_{i-j}(t) + \sum_{j=1}^{l}\beta_{j}(t)\tilde\sigma_{i-j}(t)^2, \quad i\in\IZ.
    \end{eqnarray*}
\end{itemize}

For $t\in [0,1]$, let $\tilde Z_j(t) := (\tilde Y_j(t),\tilde Y_{j-1}(t),...)$ denote the infinite vector containing the stationary approximations. We now define
\begin{eqnarray}
        V(t) &=& \IE \nabla_{\theta}^2 \ell(\tilde Z_0(t), \theta(t)),\label{eq:vdef_paper}\\
        I(t) &=& \IE[\nabla_{\theta} \ell(\tilde Z_0(t), \theta(t))\cdot \nabla_{\theta} \ell(\tilde Z_0(t), \theta(t))\tran],\label{eq:idef_paper}\\
        \Lambda(t) &=& \sum_{j\in\IZ}\IE[\nabla_{\theta} \ell(\tilde Z_0(t), \theta(t))\cdot \nabla_{\theta} \ell(\tilde Z_j(t), \theta(t))\tran].\label{eq:lambdadef_paper}
\end{eqnarray}

In our theoretical models, these quantities can be related to each other. The following lemma (a direct implication of Propositions \ref{example:tvrec} and  \ref{example:garch} in the appendix) summarizes these forms.
\begin{lemma}\label{lemma:simpler_forms}
    \begin{itemize}
        \item Case 1: It holds that $V(t) = \Lambda(t)$.\\
        If additionally (i) $\IE \zeta_0^3 = 0$, or (ii) $\mu(x,\theta) \equiv 0$ or (iii) $\sigma(x,\theta) \equiv \beta_0$ and $\IE m(\tilde X_0(t)) = 0$, then $$I(t) = \big(\begin{smallmatrix}I_k & 0\\
0 & (\IE \zeta_0^4 - 1) I_{l+1}/2\end{smallmatrix}\big)\cdot V(t),$$ where $I_d$ denotes the $d$-dimensional identity matrix.
        \item Case 2: It holds that $\Lambda(t) = I(t) = ((\IE \zeta_0^4 - 1)/2)V(t)$.
    \end{itemize}
\end{lemma}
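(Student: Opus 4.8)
The plan is to obtain the lemma by specialising the general expressions for $V(t)$, $I(t)$ and $\Lambda(t)$ furnished by Propositions \ref{example:tvrec} and \ref{example:garch}; concretely, I would compute the score $\nabla_\theta \ell$ and the Hessian $\nabla_\theta^2\ell$ of the Gaussian quasi-likelihood at the stationary approximation, evaluated along the true curve $\theta(t)$, exploiting two structural facts: at $\theta(t)$ the standardised residual equals $\zeta_0$ exactly, and $\zeta_0$ is independent of the past $\tilde Z_{-1}(t)$ (hence of everything built from $\tilde Y_{-1}(t),\tilde Y_{-2}(t),\ldots$). Throughout write $\mathcal{F}_j=\sigma(\zeta_j,\zeta_{j-1},\ldots)$.

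I would first dispose of $\Lambda(t)$. Differentiating once gives, in Case 1, $\nabla_\theta \ell(\tilde Z_0(t),\theta(t)) = -\zeta_0\, g_\alpha + (1-\zeta_0^2)\, g_\beta$, where $g_\alpha = \sigma^{-1}(m_i(\tilde X_0(t)))_i$ collects the mean-parameter directions and $g_\beta = (2\sigma^2)^{-1}(\nu_i(\tilde X_0(t)))_i$ the variance-parameter directions, both $\mathcal{F}_{-1}$-measurable; in Case 2 only the second summand survives, with $g=(2\sigma^2)^{-1}\nabla_\theta\sigma^2$. Since $\IE[\zeta_0\mid\mathcal{F}_{-1}]=0$ and $\IE[1-\zeta_0^2\mid\mathcal{F}_{-1}]=0$, the score is a martingale difference for $(\mathcal{F}_j)$, so $\IE[\nabla_\theta\ell(\tilde Z_0(t),\theta(t))\,\nabla_\theta\ell(\tilde Z_j(t),\theta(t))\tran]=0$ for every $j\neq0$ and the sum defining $\Lambda(t)$ collapses to its $j=0$ term, giving $\Lambda(t)=I(t)$ in both cases. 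This is already the first identity of Case 2, and in Case 1 it combines with the information relation below to pin down the stated forms.

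Next I would compute $V(t)=\IE\nabla_\theta^2\ell$. Upon a second differentiation, every term carrying an undifferentiated factor $\nabla_\theta^2\mu$ or $\nabla_\theta^2\sigma^2$ is multiplied by $\zeta_0$ or by $1-\zeta_0^2$ and drops out in expectation, leaving $V(t)=\IE[\sigma^{-2}\,\mu_\theta\mu_\theta\tran]+\tfrac12\IE[\sigma^{-4}\,\sigma^2_\theta(\sigma^2_\theta)\tran]$ with $\mu_\theta=\nabla_\theta\mu$, $\sigma^2_\theta=\nabla_\theta\sigma^2$. Evaluating $I(t)=\IE[\nabla_\theta\ell\,\nabla_\theta\ell\tran]$ with the moment identities $\IE\zeta_0^2=1$, $\IE[\zeta_0(1-\zeta_0^2)]=-\IE\zeta_0^3$ and $\IE[(1-\zeta_0^2)^2]=\IE\zeta_0^4-1$ produces the same leading block $\IE[\sigma^{-2}\mu_\theta\mu_\theta\tran]$, a variance block carrying the factor $\tfrac{\IE\zeta_0^4-1}{2}$ relative to that of $V(t)$, and a mean–variance cross block equal to $\tfrac{\IE\zeta_0^3}{2}\IE[\sigma^{-3}\mu_\theta(\sigma^2_\theta)\tran]$. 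In Case 1 the parametrisation decouples ($\mu$ depends only on $\alpha$, $\sigma^2$ only on $\beta$), so $V(t)$ is block diagonal with blocks $\IE[\sigma^{-2}(m_im_j)]$ and $\tfrac12\IE[\sigma^{-4}(\nu_i\nu_j)]$; matching these against $I(t)$ yields precisely the multiplier $\mathrm{diag}(I_k,\tfrac{\IE\zeta_0^4-1}{2}I_{l+1})$ as soon as the cross block of $I(t)$ vanishes, which is exactly what each of (i) $\IE\zeta_0^3=0$, (ii) $m_i\equiv0$, or (iii) $\sigma^2\equiv\beta_0$ together with $\IE m(\tilde X_0(t))=0$ guarantees. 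In Case 2 there is no mean part, only the variance block remains, and $I(t)=\tfrac{\IE\zeta_0^4-1}{2}V(t)$ follows at once; together with $\Lambda(t)=I(t)$ this gives the full chain.

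The step I expect to be the genuine obstacle is the rigorous justification that the discarded second-order terms integrate to zero and that $V(t),I(t)$ are finite — in particular in Case 2, where $\sigma^2$ and hence $\nabla_\theta\sigma^2$ are defined only through the recursion, so that one must bound $\IE[\sigma^{-4}\,\nabla_\theta\sigma^2(\nabla_\theta\sigma^2)\tran]$ uniformly. This is where the moment assumptions ($\IE|\zeta_0|^{4+a}<\infty$ in Case 2, and the stronger bound in Case 1) and the existence and geometric decay of the stationary derivative processes of $\tilde Y_i(t)$ — the derivative-process theory of \citet{dahlhaus2017} invoked in Propositions \ref{example:tvrec} and \ref{example:garch} — must enter. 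Once those integrability facts are secured, the algebra above is routine.
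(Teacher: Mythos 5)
Your approach is the one the paper actually takes: Lemma \ref{lemma:simpler_forms} is proved inside Propositions \ref{example:tvrec} and \ref{example:garch} by writing out $\nabla_\theta\ell$ and $\nabla_\theta^2\ell$ explicitly, using that at $\theta(t)$ the standardised residual equals $\zeta_0$ and is independent of $\sF_{-1}$, killing the Hessian terms carrying $\zeta_0$ or $1-\zeta_0^2$ in expectation, and comparing the mean block, variance block and cross block of $I(t)$ with those of $V(t)$. Your block formulas (identical leading block, factor $(\IE \zeta_0^4-1)/2$ on the variance block, cross block $\tfrac{\IE\zeta_0^3}{2}\IE[\sigma^{-3}\mu_\theta(\sigma^2_\theta)\tran]$) agree with the paper's displayed matrices, your handling of conditions (i)--(iii) and of Case 2 is correct, and the integrability point you flag at the end is precisely what the moment and geometric-decay machinery of those propositions supplies.

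The one place your argument does not deliver the statement is the unconditional Case 1 identity $V(t)=\Lambda(t)$. The martingale-difference property of the score gives $\Lambda(t)=I(t)$ (only the $j=0$ term of the sum survives); it says nothing about $V(t)$. Moreover your own computation shows $I(t)$ and $V(t)$ cannot coincide in general: their variance blocks differ by the factor $(\IE\zeta_0^4-1)/2$, which equals $1$ only if $\IE\zeta_0^4=3$. So the remark that the MDS identity ``combines with the information relation below to pin down the stated forms'' glosses over the fact that the chain you actually establish is $\Lambda(t)=I(t)$ together with $I(t)=\big(\begin{smallmatrix}I_k & 0\\ 0 & (\IE\zeta_0^4-1)I_{l+1}/2\end{smallmatrix}\big)V(t)$ under (i)--(iii), from which $V(t)=\Lambda(t)$ does not follow. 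For what it is worth, the paper's own proof makes the identical leap (it writes ``is a martingale difference sequence, showing that $V(t)=\Lambda(t)$'' and, two lines later, ``by the martingale difference property, $I(t)=\Lambda(t)$''), so this appears to be an inconsistency in the source --- the first bullet of Case 1 should presumably read $I(t)=\Lambda(t)$ --- but as a proof of the lemma as literally stated, yours inherits the same hole and you should either flag it or add the condition $\IE\zeta_0^4=3$.
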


\subsection{A weak Bahadur representation for \texorpdfstring{$\hat \theta_{b_n}$}{thetaest}}\label{ssc:bahadur}


In the following, we obtain a weak Bahadur representation of $\hat \theta_{b_n}$
which will be used to construct simultaneous confidence bands.  The first part of Theorem \ref{theorem:bahadur_paper} shows that $\hat \theta_{b_n}(t) - \theta(t)$ can be approximated by the expression $V(t)^{-1}\nabla_{\theta}L_{n,b_n}^c(t,\theta(t),\theta'(t))$ as expected due to a standard Taylor argument. The second part of Theorem \ref{theorem:bahadur_paper} deals with approximating this term by a weighted sum of $t$-free terms, namely
\[
    (nb_n)^{-1}\sum_{i=1}^{n}K_{b_n}(i/n-t)h_i, \quad\quad h_i := \nabla_{\theta}\ell(\tilde Z_i(i/n),\theta(i/n)),
\]
which is necessary to apply some earlier results from \citet{zhouwu10}. 
Let $\sT_n := [b_n, 1-b_n]$. For some vector or matrix $x$, let $|x|:= |x|_2$ denote its Euclidean or Frobenius norm, respectively.

\begin{theorem}[Weak Bahadur representation of $\hat \theta_{b_n}$]\label{theorem:bahadur_paper}
Let $\beta_n = (n b_n)^{-1/2}b_n^{-1/2}\log(n)^{1/2}$ and put
    \[
        \tau_n^{(1)} = (\beta_n + b_n)( (nb_n)^{-1/2}\log(n) + b_n^{2}).
    \]
    Let Assumption \ref{ass:case1} or \ref{ass:case2} hold. Then it holds that
        \begin{eqnarray}
        && \sup_{t \in \sT_n}\Big| V(t)\cdot\big\{\hat \theta_{b_n}(t) - \theta(t)\big\} -\nabla_{\theta} L_{n,b_n}^c(t,\theta(t),\theta'(t))\Big| = O_{\IP}(\tau_n^{(1)}),\label{eq:bahadur_paper}\\
        &&\sup_{t\in \sT_n}\big|\nabla_{\theta} L_{n,b_n}^c(t,\theta(t),\theta'(t)) - b_n^2\frac{\mu_{K,2}}{2}V(t) \theta''(t)\label{eq:bahadur_localstat_paper}\\
        &&\quad\quad\quad\quad\quad\quad - (n b_n)^{-1}\sum_{i=1}^{n}K_{b_n}(i/n-t)h_i \big| = O_{\IP}(\beta_n b_n^2 + b_n^3 + (nb_n)^{-1}).\nonumber
    \end{eqnarray}
\end{theorem}

\subsection{Simultaneous confidence bands for \texorpdfstring{$\hat \theta_{b_n}$}{thetaest}}

Based on the weak Bahadur result, we use results from \citet{MR2827528} to obtain a Gaussian analogue of
\[
\frac{1}{n b_n}\sum_{i=1}^{n}K_{b_n}(t-i/n) C\tran V(t)^{-1} \nabla_{\theta}\ell(\tilde Z_i(i/n), \theta(i/n)) =:\frac{1}{n b_n}\sum_{i=1}^{n}K_{b_n}(t-i/n)\tilde h_i(i/n)
\]
for some $C \in \IR^{s \times k}$. For a positive semidefinite matrix $A$ with eigendecomposition $A = QDQ\tran$, where $Q$ is orthonormal and $D$ is a diagonal matrix, define $A^{1/2} = Q D^{1/2}Q\tran$, where $D^{1/2}$ is the elementwise root of $D$. Then the following asymptotic statement for simultaneous confidence bands for $\theta(\cdot)$ holds.

\begin{theorem}[Simultaneous confidence bands for $\theta(\cdot)$]\label{th:scb_paper}
Let $C$ be a fixed $k \times s$ matrix with rank $s \le k$. Define $\hat \theta_{b_n,C}(t) := C\tran \hat \theta_{b_n}(t)$ 
and $\theta_C(t) := C\tran \theta(t)$, $A_C(t) := V(t)^{-1} C$, $\Sigma_C^2(t) := A_C\tran(t) \Lambda(t) A_C(t)$.

Let Assumption \ref{ass:case1} or \ref{ass:case2} be fulfilled. Assume that, for some  $\alpha_{exp}<\frac{1}{2}$,
\[
    \log(n)^4 \big( b_n n^{\alpha_{exp}}\big)^{-1} \to 0, \quad\quad n b_n^7 \log(n) \to 0.
\]
Then with $\hat K(x) = K(x) x$,
\begin{eqnarray}
    &&\lim_{n\to\infty}\IP\Big( \frac{\sqrt{n b_n}}{\sigma_{K,0}} \sup_{t \in \sT_n}\Big| \Sigma_C^{-1}(t)\Big\{ \hat \theta_{b_n,C}(t) - \theta_C(t) - b_n^2\frac{\mu_{K,2}}{2} \theta_C''(t)\Big\}\Big| \nonumber\\
    &&\quad\quad\quad\quad\quad\quad\quad\quad\quad\quad- B_K(m^{*}) \le \frac{u}{\sqrt{2 \log(m^{*})}}\Big) = \exp(-2 \exp(-u)), \label{eq:scb_paper}
\end{eqnarray}
where in both cases $\sT_n = [b_n,1-b_n]$, $m^{*} = 1/b_n$ and
\begin{equation}\label{eq:Bk}
    B_K(m^{*}) = \sqrt{2\log(m^{*})} + \frac{\log(C_K) + (s/2-1/2)\log(\log(m^{*})) - \log(2)}{\sqrt{2\log(m^{*})}},
\end{equation}
with
\[
    C_K = \frac{\Big\{\int_{-1}^{1}|K'(u)|^2 d u / \sigma_{K,0}^2 \pi\Big\}^{1/2}}{\Gamma(s/2)}.
\]
\end{theorem}

\begin{remark} The conditions on $b_n$ are fulfilled for bandwidths $b_n = n^{-\alpha}$, where $\alpha \in (0,1)$ satisfies
\[
    \frac{1}{7} < \alpha < \alpha_{exp}.
\]
\noindent The bandwidths $b_n = c n^{-1/5}$ are covered in both cases. 
\end{remark}

\noindent Note that for practical use of the SCB in \reff{eq:scb_paper}, one needs to estimate the bias term, choose a proper bandwidth $b_n$ and estimate $\Sigma_C(t)$. Furthermore, the theoretical SCB only has slow logarithmic convergence, thus one requires huge $n$ to achieve the desired coverage probability. To tackle these type of problems, we discuss practical issues in the next Section \ref{sec:implementation}.

\section{Implementational issues}\label{sec:implementation}

In this section, we discuss some issues which arise by implementing the procedure from Theorem \ref{th:scb_paper}. We focus on estimation of $\hat \theta_{b_n}$ and optimization of the corresponding SCBs.

\subsection{Bias correction} \label{ssc:bias correction} There are several possible ways to eliminate the bias term in Theorem \ref{th:scb_paper}. A natural way is to estimate $\theta''(t)$ by using a local quadratic estimation routine with some bandwidth $b_n' \ge b_n$. However the estimation of $\theta''(t)$ may be unstable due to the convergence condition $nb_n^5 \to \infty$ which may be hard to realize together with $n b_n^7 \log(n) \to 0$ from Theorem \ref{th:scb_paper} in practice. Here instead we propose a bias correction via the a jack-knife method inspired from \cite{hardle1986}. We define 
\begin{eqnarray}\label{eq:bias corrected theta hat}
    \tilde \theta_{b_n}(t) := 2\hat \theta_{b_n/\sqrt{2}}(t) - \hat \theta_{b_n}(t).
\end{eqnarray}
Since the weak Bahadur representation from Theorem \ref{theorem:bahadur_paper} holds both for $\hat \theta_{b_n/\sqrt{2}}$ and $\hat \theta_{b_n}(t)$, we obtain
\[
    \sup_{t\in \sT_n}\big|V(t)\cdot \{\tilde \theta_{b_n}(t) - \theta(t)\} - (nb_n)^{-1}\sum_{i=1}^{n}\tilde K_{b_n}(i/n-t)h_i\big| = O_{\IP}(\tau_n^{(2)} + \beta_n b_n^2 + b_n^3 + (nb_n)^{-1}),
\]
where $\tilde K(x) := 2\sqrt{2}K(\sqrt{2}x) - K(x)$. Note that the bias term of order $b_n^2$ is eliminated by construction. This shows that Theorem \ref{th:scb_paper} still holds true for $\tilde \theta_{b_n}(\cdot)$ with kernel $K$ replaced by the fourth-order kernel $\tilde K$ and with no bias term of order $b_n^2$.

\subsection{Estimation of the covariance matrix  \texorpdfstring{$\Sigma_C(t)$}{SigmaC(t)}}\label{ssc:covariance estimation}
In this subsection, we discuss the estimation  of $\Sigma_C^2(t)$ since this term is generally unknown but arises in the SCB in Theorem \ref{th:scb_paper}. By Lemma \ref{lemma:simpler_forms}, one has in both cases that $\Lambda(t) = I(t)$, which shows that
\begin{equation}
        \Sigma_C^2(t) = C \tran (V(t)^{-1})\tran\Lambda(t) V(t)^{-1} C = C \tran (V(t)^{-1})\tran I(t) V(t)^{-1} C.\label{eq:sigma_estimation1}
\end{equation}
As pointed out by a referee, $\Sigma_C^2(t)$ is of the  well-known 'sandwich'-form (cf. \cite{bollerslevwooldridge1992}). Even if the distribution of the innovations $\zeta_i$ is misspecified by the likelihood, one can typically simplify the representation  \reff{eq:sigma_estimation1}. If the distribution of $\zeta_0$ is correctly specified, one has $V(t) = I(t)$ and thus
\begin{equation}
    \Sigma_C^2(t) = C\tran I(t)^{-1} C.\label{eq:sigma_estimation2}
\end{equation}
If the distribution of $\zeta_0$ is misspecified by the likelihood, Lemma \ref{lemma:simpler_forms} shows that $V(t) = c_0\cdot I(t)$ with some constant matrix $c_0$ which only depends on the fourth moment $\IE[\zeta_0^4]$ of $\zeta_0$. Then one has
\begin{equation}
    \Sigma_C^2(t) = c_0\cdot C\tran I(t)^{-1} C\label{eq:sigma_estimation3}
\end{equation}
This also means that the representation \reff{eq:sigma_estimation2} is stable under misspecification of the innovation distribution as long as one corrects the expression with the factor $c_0$. To do so, one needs a possibility to estimate $\IE[\zeta_0^4]$ from the data. A possibility how to do this for linear processes was discussed in \cite{bootstrap4cumulant}.

In summary, the representation \reff{eq:sigma_estimation1} holds always true, the simpler representations \reff{eq:sigma_estimation2} and \reff{eq:sigma_estimation3} can be used under additional assumptions or if stable estimators of $c_0$ are available.

To cover all possible situations above, we discuss both estimation of $V(t)$ and $I(t)$. We propose the (boundary-corrected) estimators
\begin{eqnarray}
    \hat V_{b_n}(t) &:=& (nb_n \hat \mu_{K,0,b_n}(t))^{-1}\sum_{i=1}^{n}K_{b_n}(i/n-t) \nabla_{\theta}^2\ell(Z_i^c, \hat \theta_{b_n}(t) + (i/n-t)\widehat \theta_{b_n}'(t)),\label{proposal_est_V}\\
    \hat I_{b_n}(t) &:=& (nb_n\hat \mu_{K,0,b_n}(t))^{-1}\sum_{i=1}^{n}K_{b_n}(i/n-t) \nabla_{\theta}\ell(Z_i^c, \hat \theta_{b_n}(t) + (i/n-t)\widehat \theta_{b_n}'(t))\label{proposal_est_I}\\
    &&\quad\quad\quad\quad\quad\quad\quad\quad\quad\quad\times \nabla_{\theta}\ell(Z_i^c, \hat \theta_{b_n}(t) + (i/n-t)\widehat \theta_{b_n}'(t))\tran,\nonumber
\end{eqnarray}
where $\hat \mu_{K,0,b_n}(t) := \int_{-t/b_n}^{(1-t)/b_n} K(x) dx$. The convergence of these estimators is given in the next Proposition. Note that the following Proposition also holds if $\widehat \theta_{b_n}'$ in \reff{proposal_est_V} and \reff{proposal_est_I} is replaced by $0$.
\begin{proposition}\label{prop:v_i_estimation}
    Let Assumption \ref{ass:case1} or \ref{ass:case2} hold. Let $(\beta_n + b_n)\log(n)^2 \to 0$. Then
    \begin{enumerate}
        \item[(i)] $\sup_{t\in (0,1)}|\hat V_{b_n}(t) - V(t)| = O_{\IP}((\log n)^{-1}).$
        \item[(ii)] If $r > 4$, then $\sup_{t\in (0,1)}|\hat I_{b_n}(t) - I(t)| = O_{\IP}((\log n)^{-1}).$
    \end{enumerate}
\end{proposition}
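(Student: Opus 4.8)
The plan is to prove part (i) by a three-fold decomposition and then to obtain part (ii) by an essentially identical argument with adjusted moment bookkeeping. Introduce the \emph{oracle} version of the estimator,
\[
\bar V_{b_n}(t) := (n b_n \hat\mu_{K,0,b_n}(t))^{-1}\sum_{i=1}^n K_{b_n}(i/n - t)\,\nabla_\theta^2\ell(\tilde Z_i(i/n), \theta(i/n)),
\]
in which both the estimated parameters are replaced by the true curve $\theta(\cdot)$ and the observed vectors $Z_i^c$ are replaced by their stationary approximations $\tilde Z_i(i/n)$. I would then write
\[
\hat V_{b_n}(t) - V(t) = \underbrace{\big[\hat V_{b_n}(t) - \bar V_{b_n}(t)\big]}_{(\mathrm{A})} + \underbrace{\big[\bar V_{b_n}(t) - \IE \bar V_{b_n}(t)\big]}_{(\mathrm{B})} + \underbrace{\big[\IE \bar V_{b_n}(t) - V(t)\big]}_{(\mathrm{C})},
\]
and bound each term uniformly in $t\in(0,1)$. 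The stated rate $(\log n)^{-1}$ is deliberately crude: under the hypothesis $(\beta_n + b_n)\log(n)^2 \to 0$ each of the three terms is in fact $o_{\IP}((\log n)^{-1})$, which is exactly what the simultaneous band of Theorem \ref{th:scb_paper} needs.

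For term $(\mathrm{A})$ I would use the smoothness of $\ell$ in $\theta$. Since $\nabla_\theta^2\ell(z,\cdot)$ is Lipschitz on the compact set $\Theta$ with a data-dependent Lipschitz constant of finite moments (supplied by Assumption \ref{ass:case1}/\ref{ass:case2} together with Propositions \ref{example:tvrec} and \ref{example:garch}), replacing $\hat\theta_{b_n}(t) + (i/n-t)\widehat\theta_{b_n}'(t)$ by $\theta(i/n)$ costs at most
\[
\sup_{|i/n-t|\le b_n}\big|\hat\theta_{b_n}(t) + (i/n-t)\widehat\theta_{b_n}'(t) - \theta(i/n)\big| \le \sup_{t}|\hat\theta_{b_n}(t)-\theta(t)| + b_n\sup_{t}|\widehat\theta_{b_n}'(t)-\theta'(t)| + O(b_n^2),
\]
times a kernel-weighted average of Lipschitz constants that is $O_{\IP}(1)$ uniformly. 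The two suprema are controlled by the weak Bahadur representation of Theorem \ref{theorem:bahadur_paper} and are of order $\beta_n + b_n^2$, while the $O(b_n^2)$ is the Taylor remainder of $\theta\in C^4$. Replacing $Z_i^c$ by $\tilde Z_i(i/n)$ is then handled by the functional dependence measure, the locally stationary approximation error being summable so that its kernel average is of smaller order. Hence $(\mathrm{A})$ is $O_{\IP}((\beta_n + b_n)\,\mathrm{polylog}(n)) = o_{\IP}((\log n)^{-1})$.

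Term $(\mathrm{C})$ is pure bias. By stationarity of the approximating sequence $\{\tilde Z_j(s)\}_j$ one has $\IE\nabla_\theta^2\ell(\tilde Z_i(i/n),\theta(i/n)) = V(i/n)$, so $\IE\bar V_{b_n}(t)$ is a boundary-corrected kernel average of the smooth map $s\mapsto V(s)$; the normalization $\hat\mu_{K,0,b_n}(t)=\int_{-t/b_n}^{(1-t)/b_n}K$ rescales the kernel mass so that the estimate is valid uniformly over all of $(0,1)$, endpoints included. Smoothness of $V(\cdot)$, which follows from $\theta\in C^4$ and the differentiability in $t$ of the stationary approximation (formalized via the derivative-process theory of \citet{dahlhaus2017}), gives $|(\mathrm{C})| = O(b_n) = o((\log n)^{-1})$. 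The delicate term is $(\mathrm{B})$, the uniform stochastic fluctuation of a kernel-weighted sum of the locally stationary, weakly dependent array $\nabla_\theta^2\ell(\tilde Z_i(i/n),\theta(i/n))$: here I would discretize $(0,1)$ on a fine grid, establish a concentration bound of order $\sqrt{\log n/(nb_n)}$ at each grid point by pairing the functional dependence measure with a Rosenthal/Burkholder-type moment inequality for dependent sums, union-bound over the grid, and control the oscillation between grid points through the Lipschitz continuity of $K$ on $[-1,1]$. Under $(\beta_n+b_n)\log(n)^2\to 0$ one checks $\sqrt{\log n/(nb_n)} = o((\log n)^{-1})$, so $(\mathrm{B})$ is negligible at the stated rate. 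This maximal-inequality step — attaining the $\sqrt{\log n/(nb_n)}$ rate uniformly in $t$ while the summands are only short-range dependent and carry merely the moments granted by the assumptions — is the main obstacle of the proof.

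Part (ii) follows from the same decomposition applied to the product summand $\nabla_\theta\ell\,\nabla_\theta\ell\tran$. The bias and oracle-replacement steps go through verbatim; the only change is that, the summand being quadratic in the gradient, the moment inequality in step $(\mathrm{B})$ and the Lipschitz bound in step $(\mathrm{A})$ now require $\nabla_\theta\ell$ to possess more than four moments, which is precisely the role of the hypothesis $r>4$.
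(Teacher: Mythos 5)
Your proof is correct and follows essentially the same route as the paper's: a decomposition into the plug-in/localization error, the centered stochastic fluctuation of the kernel sum, and the deterministic bias, with the first controlled via the uniform Bahadur rates and Lipschitz continuity in $\theta$, the second via a chaining/maximal inequality for the weakly dependent array, and the third via smoothness of $t\mapsto V(t)$. The only cosmetic differences are that the paper bounds the centered empirical process uniformly over the whole parameter set $E_n$ (so the estimated parameter enters only through the deterministic limit $V^{\circ}(t,\cdot)$, which is Lipschitz) rather than Lipschitz-expanding the random summands around $\theta(i/n)$, and that the paper's uniform fluctuation rate is $\beta_n=(nb_n)^{-1/2}b_n^{-1/2}\log(n)^{1/2}$ rather than your slightly optimistic $\sqrt{\log n/(nb_n)}$ — which changes nothing under the hypothesis $(\beta_n+b_n)\log(n)^2\to 0$.
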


\noindent This shows uniform consistency of $\hat V_{b_n}(\cdot)$, $\hat I_{b_n}(\cdot)$ if $(\beta_n + b_n)\log(n)^2 \to 0$. Note that in (ii), we need more moments to discuss $\nabla_{\theta}\ell \cdot \nabla_{\theta}\ell\tran \in \sH(2M_y,2M_x,\chi,\bar{\bar C})$ ($\bar{\bar C} > 0$). In many special cases, this may be relaxed.

In either case \reff{eq:sigma_estimation1} or \reff{eq:sigma_estimation2}, we define $\hat \Sigma_{C}(t)$ by replacing $V(t),I(t)$ by the corresponding estimators $\hat V_{b_n}(t)$, $\hat I_{b_n}(t)$.

\subsection{Bandwidth selection}\label{ssc:bandwidth} Based on the asymptotic squared error decomposition
\[
    \big|\hat \theta_{b_n,C}(t) - \theta_{C}(t)\big| \approx \Big|\frac{b_n^2}{2}\mu_{K,2}\theta''_C(t)\Big|^2 + \frac{\sigma_{K,0}^2}{nb_n}\mathrm{tr}(\Sigma_C(t)),
\]
which can be read off the weak Bahadur representation \reff{eq:scb_paper}, the squared error global optimal bandwidth choice reads
\begin{equation}
    \hat b_n = n^{-1/5}\cdot \Big(\frac{\sigma_{K,0}^2 \int_0^{1}\text{tr}(C \tran V(t)^{-1}I(t) V(t)^{-1} C) dt}{\mu_{K,2}^2 \int_0^{1} |\theta_C''(t)|^2 dt}\Big)^{1/5}.\label{optimalbandwidthchoice}
\end{equation}

\noindent In practice, $\hat b_n$ is not available due to the unknown quantities on the right hand side, in particular $\theta''(t)$. We therefore adapt a model-based cross validation method from \citet{richterdahlhaus2017}, which was shown to work even if the underlying parameter curve is only H{\" o}lder continuous and $\nabla_{\theta}\ell(\tilde Z_i(t),\theta(t))$ is uncorrelated. Here, we reformulate this selection procedure for the local linear setting. For $j = 1,\ldots,n$, define the leave-one-out local linear likelihood
\begin{equation}
    L_{n,b_n,-j}^c(t,\theta,\theta') := (nb_n)^{-1}\sum_{i=1, i\not= j}^{n}K_{b_n}(i/n-t) \ell(Z_i^c, \theta + (i/n-t)\theta')\label{eq:cv1}
\end{equation}
and the corresponding leave-one-out estimator \[
    (\hat \theta_{b_n,-j}(t), \hat \theta_{b_n,-j}'(t)) = \argmin_{\theta \in \Theta, \theta' \in \Theta'} L_{n,b_n,-j}^c(t,\theta,\theta').
\]
The bandwidth $\hat b_n^{CV}$ is chosen via minimizing
\begin{equation}
    CV(b_n) := n^{-1}\sum_{i=1}^{n}\ell(Z_i^c, \hat \theta_{b_n,-i}(i/n)) w(i/n),\label{eq:cv2}
\end{equation}
where $w(\cdot)$ is some weight function to exclude boundary effects. A possible choice is $w(\cdot) := \Ii_{[\gamma_0,1-\gamma_0]}$ with some fixed $\gamma_0 > 0$. Note that it is important to use  the modified local linear approach due to the different bias terms. In  \citet{richterdahlhaus2017}, it was shown that the local constant version of this procedure selects asymptotically optimal bandwidths and works even if a model misspecification is present, i.e. if the function $\ell$ leads to estimators $\hat \theta_{b_n}$ which are not consistent. This motivates that a similar behavior should hold for the local constant version.


\subsection{Bootstrap method}\label{ssc:bootstrap}
The SCB for $\theta_C(t)$ obtained in Theorem \ref{th:scb_paper} provides a slow logarithmic rate of convergence to the Gumbel distribution. Thus, even for  moderately large values of sample size $n$, it is practically infeasible to use such a theoretical SCB as the coverage will possibly be lower than the specified nominal level. First we show an empirical coverage comparison of how far the theoretical confidence intervals lag behind in achieving their nominal coverage. We use the same simulation setting (cf. Section \ref{ssc:simulations}) for the tvGARCH case: 

 $$X_i = \sigma_i \zeta_i, \sigma_i^2 = \alpha_0(i/n) + \alpha_1(i/n) X_{i-1}^2 + \beta_1(i/n) \sigma_{i-1}^2,$$ where $\alpha_0(t) = 1.0 + 0.2 \sin(2\pi t)$, $\alpha_1(t) = 0.45 + 0.1 \sin(\pi t)$ and $\beta_1(t) = 0.1 + 0.1\sin(\pi t)$, $\zeta_i$ is i.i.d. standard normal distributed. For estimation, we choose $K(x) = \frac{3}{4}(1 - x^2)\Ii_{[-1,1]}(x)$ to be the Epanechnikov kernel, $n = 2000, 5000 $ for several different $b_n$. From Table \ref{tab:data_garchnonboot} one can see that the simultaneous coverage is never even positive for the SCB specified in Theorem \ref{th:scb_paper}. The individual coverages are very low for small bandwidth and with higher bandwidth they over-compensate. The performance for $n= 5000$ observations is slightly better, hinting at the logarithmic rate of convergence in Theorem \ref{th:scb_paper}.

\begin{table}[!ht]
\caption{Coverage probabilities of the SCB in (b) for $n = 2000, 5000$, $b_n=0.25, 0.3, 0.35$}
\centering
\label{tab:data_garchnonboot}
\begin{tabular}{| l | l || c | c | c | c || c | c | c | c |}
\hline
& &  \multicolumn{4}{c||}{$\alpha = 90\%$} &  \multicolumn{4}{c|}{$\alpha = 95\%$}\\
 $n$ & $b_n$ & $\alpha_0$ & $\alpha_1$ & $\beta_1$ & $(\alpha_0,\alpha_1,\beta_1)\tran$ & $\alpha_0$ & $\alpha_1$ & $\beta_1$ & $(\alpha_0,\alpha_1,\beta_1)\tran$ \\
\hline\hline
2000 & $0.3$ Gumbel & 0.778 & 0.405 & 0.638 & 0 & 0.629 & 0.215 & 0.42  & 0 \\
\hline
& $0.3$ Bootstrap & 0.944 & 0.822 & 0.924 & 0.869 & 0.967 & 0.89 & 0.961  & 0.92 \\
\hline
 & $0.35$ Gumbel & 0.954 & 0.891 & 0.94 & 0 & 0.871 & 0.714  & 0.818 & 0 \\
\hline
 & $0.35$ Bootstrap & 0.941 & 0.843 & 0.923 & 0.864 & 0.967 & 0.908 & 0.945  & 0.913 \\
\hline
\hline
5000 & $0.25$ Gumbel & 0.6 & 0.248 & 0.481 & 0 & 0.481 & 0.126 & 0.298 & 0 \\
\hline
 & $0.25$ Bootstrap & 0.955 & 0.855 & 0.936 & 0.886 & 0.974 & 0.903 & 0.959 & 0.929 \\
\hline
 & $0.30$ Gumbel & 0.756 & 0.461 & 0.642 & 0 & 0.643 & 0.261 & 0.432 & 0 \\
\hline
 & $0.30$ Bootstrap & 0.941 & 0.889 & 0.939 & 0.904 & 0.974 & 0.931 & 0.967 & 0.949 \\
\hline
 & $0.35$ Gumbel & 0.96 & 0.917 & 0.957 & 0 & 0.878 & 0.701 & 0.84 & 0 \\
\hline
 & $0.35$ Bootstrap & 0.949 & 0.903 & 0.941 & 0.903 & 0.972 & 0.95 & 0.975 & 0.946 \\
\hline
\end{tabular}
\end{table}

We circumvent this convergence issue in this subsection by proposing a wild bootstrap algorithm. Recall the jackknife-based bias corrected estimator $\tilde \theta_{b_n}$ from (\ref{eq:bias corrected theta hat}). Let $\tilde{\theta}_C(t)=C^T \tilde{\theta}_{b_n}(t)$. We have the following proposition as the key idea behind the bootstrap method. 

\begin{proposition}\label{prop:prop1}
Suppose that Assumption \ref{ass:case1} or Assumption \ref{ass:case2} holds. Furthermore, assume that $b_n =O(n^{-\kappa})$ with $1/7 < \kappa < \frac{1}{2}$. Then on a richer probability space, there are i.i.d. $V_1,V_2,\ldots,\sim N(0, Id_s)$ such that
\begin{eqnarray}\label{eq:prop1}
    \sup_{t \in \sT_n} |\hat \theta_{b_n,C}(t)-\theta_C(t)- \Sigma_C(t) Q_{b_n}^{(0)}(t)|=O_{\IP}\big(\frac{n^{-\nu}}{\sqrt{n b_n} \log(n)^{1/2}}\big),
\end{eqnarray}

\noindent where $\nu = \min\{\frac{1}{4}-\kappa/2, 7\kappa/2-1/2,\kappa/2\} > 0$ and $$Q_{b_n}^{(0)}(t)=\frac{1}{nb_n}\sum_{i=1}^n V_i K_{b_n}(i/n-t).$$
\end{proposition}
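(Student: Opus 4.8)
The plan is to reduce the left-hand side of \reff{eq:prop1} to a single kernel-smoothed linear functional of the score vectors, and then to feed that functional into the Gaussian approximation machinery of \citet{MR2827528}, which is exactly what the weak Bahadur representation was engineered to serve. First I would apply the bias-corrected Bahadur representation. Since Theorem \ref{theorem:bahadur_paper} holds for both $\hat\theta_{b_n/\sqrt2}$ and $\hat\theta_{b_n}$, the jackknife estimator \reff{eq:bias corrected theta hat} obeys the representation displayed in Section \ref{ssc:bias correction},
$$\sup_{t\in\sT_n}\Bigl|V(t)\{\tilde\theta_{b_n}(t)-\theta(t)\}-(nb_n)^{-1}\sum_{i=1}^n\tilde K_{b_n}(i/n-t)h_i\Bigr|=O_\IP\bigl(\tau_n^{(2)}+\beta_n b_n^2+b_n^3+(nb_n)^{-1}\bigr),$$
with $h_i=\nabla_\theta\ell(\tilde Z_i(i/n),\theta(i/n))$ and with the $O(b_n^2)$ bias removed by construction. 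Left-multiplying by $C\tran V(t)^{-1}$, which is bounded with bounded inverse on the compact $\Theta$, and using the Lipschitz continuity of $V(\cdot)$ (from $\theta\in C^4$) to exchange the prefactor $V(t)^{-1}$ for $V(i/n)^{-1}$ inside the sum, I obtain
$$\sup_{t\in\sT_n}\Bigl|\hat\theta_{b_n,C}(t)-\theta_C(t)-(nb_n)^{-1}\sum_{i=1}^n\tilde K_{b_n}(i/n-t)\,g_i\Bigr|=O_\IP(r_n),\qquad g_i:=C\tran V(i/n)^{-1}h_i,$$
where $r_n$ collects the remainder above together with the $O(b_n)$ cost of the prefactor swap. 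Tracking these remainder terms in powers of $n$ under $b_n\asymp n^{-\kappa}$ is what produces the $7\kappa/2-1/2$ and $\kappa/2$ exponents inside $\nu$.

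Next I would identify the limiting covariance. The vectors $g_i$ are centred, since $\theta(i/n)$ solves $\IE\nabla_\theta\ell(\tilde Z_0(t),\theta(t))=0$, and by the definition \reff{eq:lambdadef_paper} of $\Lambda$ and $A_C(t)=V(t)^{-1}C$ their long-run covariance at time $t$ equals
$$C\tran V(t)^{-1}\Lambda(t)V(t)^{-1}C=A_C\tran(t)\Lambda(t)A_C(t)=\Sigma_C^2(t).$$
Lemma \ref{lemma:simpler_forms} furnishes $\Lambda(t)$ in closed form (e.g.\ $\Lambda(t)=\tfrac{\IE\zeta_0^4-1}{2}V(t)$ in Case 2), so $\Sigma_C^2(t)$ and its symmetric square root $\Sigma_C(t)$ are well defined and Lipschitz in $t$. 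With this in hand I would invoke the Gaussian approximation of \citet{MR2827528} for the $t$-free sum $(nb_n)^{-1}\sum_i\tilde K_{b_n}(i/n-t)g_i$: after checking, through the functional dependence measure and the moment bounds of Assumption \ref{ass:case1}/\ref{ass:case2} (the $4+a$ moments in Case 2), that $(g_i)$ has fast-decaying dependence and enough finite moments, this result gives, on a richer probability space, i.i.d.\ $V_1,V_2,\ldots\sim N(0,\mathrm{Id}_s)$ whose partial sums strongly approximate those of $\Sigma_C(i/n)V_i$.

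The coupling is then transferred to the smoothed statistic by summation by parts: because the discrete total variation of $i\mapsto\tilde K_{b_n}(i/n-t)$ is $O(1)$ uniformly in $t$, the smoothed coupling error is of order $(nb_n)^{-1}$ times the partial-sum coupling rate, which is $O_\IP(n^{1/4})$ under the available moments; dividing by $nb_n\asymp n^{1-\kappa}$ and comparing with the target scale $(nb_n)^{-1/2}=n^{-(1-\kappa)/2}$ produces precisely the exponent $1/4-\kappa/2$. Finally I would replace $\Sigma_C(i/n)$ by $\Sigma_C(t)$ inside the kernel window --- legitimate since $\Sigma_C$ is Lipschitz and $\tilde K_{b_n}(i/n-t)$ is supported on $|i/n-t|\le b_n$ --- which factors out $\Sigma_C(t)$ and leaves $\Sigma_C(t)\,Q_{b_n}^{(0)}(t)$. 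Taking the worst of the three error contributions yields the rate $n^{-\nu}(nb_n)^{-1/2}\log(n)^{-1/2}$ with $\nu=\min\{1/4-\kappa/2,\,7\kappa/2-1/2,\,\kappa/2\}$, strictly positive on the stated range of $\kappa$.

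The hard part will be the Gaussian coupling step: one must verify that the transformed score process $(g_i)$ meets the hypotheses of the approximation theorem of \citet{MR2827528} --- in particular that the stationary approximations $\tilde Z_i(i/n)$ arising from the nonlinear recursions of Cases 1 and 2 admit a functional dependence measure decaying fast enough under only $4+a$ moments --- and that the coupling is uniform in $t$ with the time-varying covariance $\Sigma_C(\cdot)$ reproduced correctly, so that the secondary $O_\IP(n^{-1/2})$ covariance-matching term (responsible for the $\kappa/2$ exponent) is controlled. By contrast, the Bahadur reduction of the first step and the Lipschitz localization of $\Sigma_C$ are routine given Theorem \ref{theorem:bahadur_paper} and the smoothness of $V(\cdot)$ and $\Lambda(\cdot)$.
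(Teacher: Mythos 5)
Your proposal follows essentially the same route as the paper: the paper's proof is simply the assembly of four approximation rates --- \reff{eq:applicatbahadur1} for the Bahadur linearization, \reff{eq:proofbahadur1} for swapping the prefactor $A_C(t)\tran$ against $A_C(i/n)\tran$ inside the sum, \reff{eq:localscb1} for the Gaussian coupling via Theorem \ref{theorem:gaussapprox_localstat} plus summation by parts, and \reff{eq:localscb2} for localizing $\Sigma_C(i/n)$ to $\Sigma_C(t)$ --- which are exactly your four steps, and your bookkeeping of the exponents $1/4-\kappa/2$, $7\kappa/2-1/2$ and $\kappa/2$ under $b_n\asymp n^{-\kappa}$ matches the paper's. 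The one minor imprecision is that the prefactor swap costs $O_{\IP}(\beta_n b_n)$ (a stochastic chaining bound, relatively of size $b_n^{1/2}\log n$) rather than a deterministic $O(b_n)$, but since this is precisely the term generating the $\kappa/2$ exponent you already include in $\nu$, the conclusion is unaffected.
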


\noindent The proof of Proposition \ref{prop:prop1} is immediate from the approximation rates \reff{eq:localscb1}, \reff{eq:localscb2}, \reff{eq:proofbahadur1} and \reff{eq:applicatbahadur1} in the appendix which, ignoring the $\log(n)$ terms, are of the form $c_n \cdot (nb_n)^{-1/2}\log(n)^{-1/2}$ with 
\[
    c_n \in \{\big(b_n n^{(2\gamma+\varsigma \gamma-\varsigma)/(\varsigma+4\gamma+2\gamma \varsigma)}\big)^{-1/2}, b_n^{1/2}, b_n, (n b_n^{7})^{1/2}, (n b_n^2)^{-1/2}\},
\]
where $\gamma > 1$ is arbitrarily large and $\varsigma > 0$.

\noindent One can interpret \reff{eq:prop1} in the sense that $\Sigma_C(t)Q_{b_n}^{(0)}(t)$ approximates the stochastic variation in $\hat \theta_{b_n,C}(t)-\theta_C(t)$ uniformly over $t \in \sT_n$. Thus it can be used as a margin for the noise to construct confidence bands, provided one can consistently estimate $\Sigma_C(t)$.

\subsubsection{Boundary considerations} The results shown above only hold for $t \in \sT_n$. For inference of some time series models like ARCH or GARCH, large bandwidths are needed to get sufficiently smooth and stable estimators even for a large number of observations. It seems hard to generalize the SCB result Theorem \ref{th:scb_paper} to the whole interval $t \in (0,1)$. However it is possible to generalize the bootstrap procedure which may be more important in practice:

\begin{proposition}\label{prop:prop2}
Suppose that the conditions on $\kappa,\nu$ of Proposition \ref{prop:prop1} hold. Then on a richer probability space, there exist i.i.d. $V_1,V_2,\ldots,\sim N(0, Id_s)$ such that
\begin{eqnarray*}
    &&\sup_{t \in (0,1)} |N_{b_n}^{(0)}(t)\cdot \big\{\hat{\theta}_{b_n,C}(t)-\theta_C(t)\big\} + b_n^2 N_{b_n}^{(1)}(t)\theta_{C}''(t) -  \Sigma_C(t) W_{b_n}(t)|=O_{\IP}\big(\frac{n^{-\nu}}{\sqrt{n b_n} \log(n)^{1/2}}\big),
\end{eqnarray*}

\noindent where
\begin{equation}
 W_{b_n}(t)= Q_{b_n}^{(0)}(t) - \frac{\hat \mu_{K,1,b_n}(t)}{\hat \mu_{K,2,b_n}(t)} \cdot Q_{b_n}^{(1)}(t)\label{eq:boundary_corrected_bootstrap}
\end{equation}
and $N_{b_n}^{(j)}(t) := \frac{\hat \mu_{K,j,b_n}(t) \hat \mu_{K,j+2,b_n}(t) - \hat \mu_{K,j+1,b_n}(t)^2}{\hat \mu_{K,2,b_n}(t)}$, $\hat \mu_{K,j,b_n}(t) := \int_{-t/b_n}^{(1-t)/b_n}K(x) x^j dx$,
\[
    Q_{b_n}^{(j)}(t)=\frac{1}{nb_n}\sum_{i=1}^n V_i K_{b_n}(i/n-t)\big[(i/n-t)b_n^{-1}\big]^{j}, \quad\quad  (j = 0,1).
\]
\end{proposition}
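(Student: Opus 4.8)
The plan is to reproduce the argument behind Proposition~\ref{prop:prop1}, but \emph{without} restricting $t$ to the interior set $\sT_n$, keeping the genuinely $t$-dependent truncated moments $\hat\mu_{K,j,b_n}(t)$ in the design and retaining the $O(b_n^2)$ curvature bias explicitly. First I would write the local-linear normal equations $\nabla_\theta L_{n,b_n}^c=0$, $\nabla_{\theta'}L_{n,b_n}^c=0$ at $(\hat\theta_{b_n}(t),\widehat\theta_{b_n}'(t))$ and Taylor-expand them about $(\theta(t),\theta'(t))$. Using the full-interval version of the weak Bahadur representation (the appendix companion of Theorem~\ref{theorem:bahadur_paper}, run on all of $(0,1)$ rather than on $\sT_n$), this yields
\[
\begin{pmatrix}\hat\mu_{K,0,b_n}(t) & \hat\mu_{K,1,b_n}(t)\\ \hat\mu_{K,1,b_n}(t) & \hat\mu_{K,2,b_n}(t)\end{pmatrix}\otimes V(t)\cdot\begin{pmatrix}\hat\theta_{b_n}(t)-\theta(t)\\ b_n(\widehat\theta_{b_n}'(t)-\theta'(t))\end{pmatrix} = -\begin{pmatrix}\nabla_\theta L\\ b_n^{-1}\nabla_{\theta'}L\end{pmatrix}+\mathrm{rem},
\]
where all gradients are at $(t,\theta(t),\theta'(t))$. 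By the second part of Theorem~\ref{theorem:bahadur_paper} the two rows of the right-hand gradient split into the $t$-free stochastic sums $(nb_n)^{-1}\sum_i K_{b_n}(i/n-t)h_i$ and $(nb_n)^{-1}\sum_i K_{b_n}(i/n-t)(i/n-t)b_n^{-1}h_i$, plus the second-order Taylor (curvature) contribution $-\tfrac12 V(t)\theta''(t)b_n^2$ carrying the weights $\hat\mu_{K,2,b_n}(t)$ and $\hat\mu_{K,3,b_n}(t)$ respectively, plus a remainder that is uniformly of the order appearing in Proposition~\ref{prop:prop1}.

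Next I would solve this $2\times2$ block system. Rather than invert the determinant $\hat\mu_{K,0,b_n}\hat\mu_{K,2,b_n}-\hat\mu_{K,1,b_n}^2$ directly, which is awkward to bound near the edge, I multiply the $\theta$-row of the solution by $N_{b_n}^{(0)}(t)=(\hat\mu_{K,0,b_n}\hat\mu_{K,2,b_n}-\hat\mu_{K,1,b_n}^2)/\hat\mu_{K,2,b_n}$; this cancels the determinant and leaves only $1/\hat\mu_{K,2,b_n}(t)$, giving
\[
N_{b_n}^{(0)}(t)\{\hat\theta_{b_n}(t)-\theta(t)\} = -V(t)^{-1}\Big[\nabla_\theta L-\frac{\hat\mu_{K,1,b_n}(t)}{\hat\mu_{K,2,b_n}(t)}\,b_n^{-1}\nabla_{\theta'}L\Big]+\mathrm{rem}.
\]
Applying $C\tran$, the stochastic part becomes $C\tran V(t)^{-1}$ times the difference of the $K_{b_n}$- and $(\hat\mu_{K,1,b_n}/\hat\mu_{K,2,b_n})$-scaled $\hat K_{b_n}$-weighted sums of $h_i$, i.e.\ the analogue of $W_{b_n}(t)$ with $h_i$ in place of the Gaussian $V_i$; using the elementary identity $\hat\mu_{K,2,b_n}-(\hat\mu_{K,1,b_n}/\hat\mu_{K,2,b_n})\hat\mu_{K,3,b_n}=-N_{b_n}^{(1)}(t)$, the curvature part collapses into precisely the bias $b_n^2 N_{b_n}^{(1)}(t)\theta_C''(t)$ (sign and constant fixed by this algebra). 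The crucial uniform bound is $\hat\mu_{K,2,b_n}(t)=\int_{-t/b_n}^{(1-t)/b_n}K(x)x^2\,dx\ge\tfrac12\int_{-1}^1 K(x)x^2\,dx>0$ for every $t\in(0,1)$ by symmetry of $K$, so all reciprocals, together with $V(t)^{-1}$, stay uniformly bounded.

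Then I would invoke the Gaussian approximation of \citet{MR2827528,zhouwu09} to pass from weighted sums of $h_i$ to weighted sums of i.i.d.\ $N(0,Id_s)$ vectors. Because the \emph{same} increments $V_i$ must feed both $Q_{b_n}^{(0)}$ and $Q_{b_n}^{(1)}$, this is a single strong approximation of the partial-sum process of $\tilde h_i(i/n)=C\tran V(i/n)^{-1}\nabla_\theta\ell(\tilde Z_i(i/n),\theta(i/n))$ on a richer space; summation by parts against the weights $K_{b_n}(i/n-t)$ and $K_{b_n}(i/n-t)(i/n-t)b_n^{-1}$ then turns the two sums into $\Sigma_C(t)Q_{b_n}^{(0)}(t)$ and $\Sigma_C(t)Q_{b_n}^{(1)}(t)$ simultaneously, the factor $\Sigma_C(t)=(A_C\tran(t)\Lambda(t)A_C(t))^{1/2}$ emerging as the local long-run standard deviation of $\tilde h_i$. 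Collecting terms gives $C\tran V(t)^{-1}[\,\cdots]=\Sigma_C(t)W_{b_n}(t)+\mathrm{error}$, which is the asserted identity; since the underlying partial-sum approximation is global in $t$, nothing is tied to $\sT_n$, and the accumulated error is controlled by the identical rate bookkeeping (the list of $c_n$) used for Proposition~\ref{prop:prop1}.

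The hard part is uniformity all the way to the boundary. On $(0,b_n)\cup(1-b_n,1)$ the kernel window is one-sided, so $\hat\mu_{K,1,b_n}(t)$ no longer vanishes and the whole design is genuinely $t$-dependent; I must (a) extend the weak Bahadur representation and the underlying uniform consistency of $(\hat\theta_{b_n},\widehat\theta_{b_n}')$ from $\sT_n$ to all of $(0,1)$, (b) show the local-linear design stays uniformly well-conditioned there, which is exactly what the $N_{b_n}^{(0)}$ normalization and the uniform lower bound on $\hat\mu_{K,2,b_n}(t)$ provide, and (c) check that the window $[t-b_n,t+b_n]\cap[0,1]$ always has length at least $b_n$ and hence contains at least of order $nb_n$ design points, so that neither the Bahadur remainder nor the Gaussian-approximation error degrades at the edge. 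Once these uniform boundary estimates are secured, the interior reasoning of Proposition~\ref{prop:prop1} transfers verbatim with the $t$-dependent moments retained.
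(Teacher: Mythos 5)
Most of your plan coincides with the paper's actual proof: the block normal equations with the truncated moments $\hat\mu_{K,j,b_n}(t)$ retained, the multiplication by $N_{b_n}^{(0)}(t)$ to cancel the determinant rather than inverting it near the edge, the identity $\hat\mu_{K,2,b_n}(t)-\hat\mu_{K,1,b_n}(t)\hat\mu_{K,3,b_n}(t)/\hat\mu_{K,2,b_n}(t)=-N_{b_n}^{(1)}(t)$ producing the bias term, and a single strong Gaussian approximation of the partial sums of $\nabla_\theta\ell(\tilde Z_i(i/n),\theta(i/n))$ feeding both $Q_{b_n}^{(0)}$ and $Q_{b_n}^{(1)}$ by summation by parts. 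That part of the bookkeeping is fine.

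The genuine gap is in your item (a). You treat the extension of the weak Bahadur representation to all of $(0,1)$ as something that follows from well-conditioning of the local-linear design (your lower bound on $\hat\mu_{K,2,b_n}(t)$ and the $N_{b_n}^{(0)}$ normalization) together with the count of design points in the window. For a general nonlinear M-estimator this is insufficient: before you may Taylor-expand the normal equations at all, you need $\sup_{t\in(0,1)}\big|\hat\eta_{b_n}(t)-(\theta(t)\tran,0)\tran\big|=o_{\IP}(1)$, and near the boundary the limiting criterion is the one-sided average $\tilde L^{\circ}_{b_n}(t,\eta)=\int_{-t/b_n}^{(1-t)/b_n}K(x)\,L(t,\eta_1+\eta_2x)\,dx$, for which unique minimization of $L(t,\cdot)$ at $\theta(t)$ does not immediately give a well-separated minimum in $(\eta_1,\eta_2)$ — on a one-sided window a displacement in $\eta_1$ can be partly offset by $\eta_2x$. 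The paper closes this with an identification argument that your proposal lacks: by optimality of $\hat\eta_{b_n}(t)$ and uniform convergence of the empirical criterion, it suffices to lower-bound one of the two half-integrals $\int_{-1}^{0}$, $\int_{0}^{1}$ of $K(x)\{L(t,\hat\theta_{b_n}(t)+b_n\widehat\theta_{b_n}'(t)x)-L(t,\theta(t))\}\,dx$ (for $b_n<1/2$ at least one half-interval lies entirely in the window); a case split on whether $|\hat\theta_{b_n}(t)-\theta(t)|\ge\tfrac12|b_n\widehat\theta_{b_n}'(t)|$ or not shows that $\hat\theta_{b_n}(t)+b_n\widehat\theta_{b_n}'(t)x$ stays a fixed distance from $\theta(t)$ either for $|x|\le 1/4$ or for $|x|\ge 3/4$, so one quarter-window integral is bounded below by a constant, yielding a contradiction unless the estimator is uniformly consistent. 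Design conditioning enters only after this step. Without supplying some argument of this kind, your plan cannot get started on $(0,1)\setminus\sT_n$, which is exactly the region the proposition is about.
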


Note that the additional term in \reff{eq:boundary_corrected_bootstrap} reduces to $Q_{b_n}^{(0)}(t)$ for $t \in \sT_n$.

To eliminate the bias inside $t \in \sT_n$, it is still recommended to use the jack-knife estimator $\tilde \theta_C(t)$. From Proposition \ref{prop:prop2} we obtain
\begin{eqnarray}
    &&\sup_{t\in(0,1)}\big|N_{b_n}^{(0)}(t) N_{b_n/\sqrt{2}}^{(0)}(t)\big\{\tilde \theta_C(t) - \theta(t)\big\} + b_n^2\big\{N_{b_n/\sqrt{2}}^{(1)}(t)N_{b_n}^{(0)}(t)-N_{b_n}^{(1)}(t)N_{b_n/\sqrt{2}}^{(0)}(t)\big\}\theta_C''(t)\nonumber\\
    &&\quad\quad\quad\quad\quad\quad\quad\quad - \Sigma_C(t)W_{b_n}^{(debias)}(t)\big| = O_{\IP}\big(\frac{n^{-\nu}}{\sqrt{n b_n} \log(n)^{1/2}}\big),\label{eq:boundary_corrected_bootstrap2}
\end{eqnarray}
where
\begin{equation}
    W_{b_n}^{(debias)}(t) = 2 N_{b_n}^{(0)}(t)\cdot \big[ Q_{b_n/\sqrt{2}}^{(0)}(t) - \frac{\hat \mu_{K,1,b_n/\sqrt{2}}(t)}{\hat \mu_{K,2,b_n/\sqrt{2}}(t)} Q_{b_n/\sqrt{2}}^{(1)}(t)\big] - N_{b_n/\sqrt{2}}^{(0)}(t)\cdot \big[ Q_{b_n}^{(0)}(t) - \frac{\hat \mu_{K,1,b_n}(t)}{\hat \mu_{K,2,b_n}(t)} Q_{b_n}^{(1)}(t)\big].\label{eq:bootstrap_corrected_empirical_qunatile_generator}
\end{equation}
The additional factor $N_{b_n}^{(0)}(t) N_{b_n/\sqrt{2}}^{(0)}(t)$ in \reff{eq:boundary_corrected_bootstrap2} serves as an indicator how near $t$ is to the boundary. For $t \in \sT_n$, this factor is 1 while for $t \in (0,1)\backslash \sT_n$, $N_{b_n}^{(0)}(t) N_{b_n/\sqrt{2}}^{(0)}(t)$ may be very small, inducing large diameters of the band near the boundary.
Note that the bias correction of the jack-knife estimator $\tilde \theta_C(t)$ may be useless in $t \in (0,1) \backslash \sT_n$ since $N_{b_n/\sqrt{2}}^{(1)}(t)N_{b_n}^{(0)}(t)\not= N_{b_n}^{(1)}(t)N_{b_n/\sqrt{2}}^{(0)}(t)$. However it is necessary from a theoretical point of view to use the same estimator for the whole region $(0,1)$ to get a uniform band based on the approximation \reff{eq:boundary_corrected_bootstrap2}.

In practice, the result \reff{eq:boundary_corrected_bootstrap2} can be used as follows: We can create a large number of i.i.d. copies $W_{b_n}^{(boot,debias)}(t)$ of $W_{b_n}^{(debias)}(t)$ by creating i.i.d. copies
\begin{eqnarray}\label{eq:boot}
\\    
  \nonumber Q_{b_n}^{(0),boot}(t)=\frac{1}{nb_n}\sum_{i=1}^n V_i^* K_{b_n}(i/n-t), \quad\quad Q_{b_n}^{(1),boot} \frac{1}{nb_n}\sum_{i=1}^n V_i^* K_{b_n}(i/n-t)\cdot (i/n-t)b_n^{-1}
\end{eqnarray}

\noindent where $V_1^*, V_2^*,\ldots ,$  are i.i.d. $N(0, I_{s \times s})$-distributed random variables, and computing $W_{b_n}^{(boot,debias)}(t)$ according to \reff{eq:bootstrap_corrected_empirical_qunatile_generator}. Quantiles of $W_{b_n}^{(debias)}(t)$ then can be determined by using the corresponding empirical quantile of the copies $W_{b_n}^{(boot,debias)}(t)$. Then one can use \reff{eq:boundary_corrected_bootstrap2} to construct the confidence band for $\theta_C(t)$. For convenience of the readers, we provide a summarized algorithm of the above discussion.

\textbf{Algorithm for constructing SCBs of $\theta_C(t)$:}
\begin{itemize}
\item Compute the appropriate bandwidth $b_n$ based on the cross validation method in Subsection \ref{ssc:bandwidth} and compute $\tilde{\theta}_{C}(t)$ based on the jackknife-based estimator from \ref{ssc:bias correction}.
\item For $r = 1,\ldots,N$ with some large $N$, generate $n$ i.i.d. $N(0,I_{s\times s})$ random variables $V_1^{*},\ldots,V_n^{*}$ and compute $q_r=\sup_{t \in (0,1)}|W_{b_n}^{(boot,debias)}(t)|$, where $W_{b_n}^{(boot,debias)}(t)$ is computed according to \reff{eq:bootstrap_corrected_empirical_qunatile_generator}, \reff{eq:boot}.
\item Compute $u_{1-\alpha} = q_{\lfloor (1-\alpha) N\rfloor}$, the empirical $(1-\alpha)$th quantile of $\sup_{t\in[0,1]}|W_{b_n}^{(debias)}(t)|$. 
\item Calculate $\hat{\Sigma}_C(t) = \{ C^T \hat{V}(t)^{-1} \hat{\Lambda}(t) \hat{V}(t)^{-1} C\} ^{1/2}$ with the estimators proposed in Subsection \ref{ssc:covariance estimation}. As mentioned there,  $V(t)^{-1}\Lambda(t)V(t)^{-1}$ can often be simplified. 
\item The SCB for $\theta_C(t)$ is $\tilde \theta_{C,b_n}(t)+\hat{\Sigma}_C(t) u_{1-\alpha} \mathcal{B}_s$, where $\mathcal{B}_s = \{x\in \IR^{s}: |x| \le 1\}$ is the unit ball in $\mathbb{R}^s$.
\end{itemize}

\begin{remark}\label{rem:garch}(Discussion of the tvGARCH parameter restriction) A very valid question was asked by a reviewer about the applicability of the assumption \reff{eq:discussionmomentgarch}. We would like to point out that this assumption is necessary under the fourth moment assumption of the GARCH process. Investigating the proof of Proposition \ref{example:garch} very minutely, it seems that it might be possible to relax the existence of $4+a$ moments for the GARCH process to only $2+a$ moments which could potentially improve the condition \reff{eq:discussionmomentgarch} to $$ \alpha_1(\cdot)+\beta_1(\cdot) <1.$$ However, the entire bias expansion arguments in the proof of Theorem \ref{th:scb_paper}  would change based on this relaxed moment assumption and it would require a different notion of local-stationarity that allows more approximating terms. To keep the general theme of the paper, we decided against proving a separate result for just GARCH(1,1). Moreover, from a practical point of view, when we estimate $\Sigma_C(t)$, we use $I_{b_n}(t)$ from section 4 which is only consistent under at least 4th moment existence of the GARCH process. We also found that, for some very popular stock market datasets (one such example is given in Section \ref{sec:simureal}) one can reasonably assume that the condition \reff{eq:discussionmomentgarch} is satisfied.   
\end{remark}

\section{Simulation results and applications}\label{sec:simureal}
This section consists of some summarized simulations and some real data applications related to our theoretical results. Because of the generality of our theoretical framework, it is impossible to report simulation performance even for the most prominent examples in these different classes. Therefore we restrict ourselves to conditional heteroscedasticity (CH) models for simulations and real data applications. For the time-varying simultaneous band, to the best of our knowledge, there is no or little simulation results reported. For the tvAR, tvMA and tvARMA processes we obtained quite satisfactory results, but they are omitted here to keep this discussion concise.


\subsection{Simulations}\label{ssc:simulations}

In this section, we study the finite sample coverage probabilities of our SCBs for theoretical coverage $\alpha = 0.9$ and $\alpha = 0.95$ in the following tvARCH(1) and tvGARCH(1,1) models:
\begin{itemize}
    \item[(a)] $X_i = \sqrt{\alpha_0(i/n) + \alpha_1(i/n) X_{i-1}^2}\zeta_i$, where $\alpha_0(t) = 0.8 + 0.3 \cos(\pi t)$, $\alpha_1(t) = 0.45+0.1 \cos(\pi t)$,
    \item[(b)] $X_i = \sigma_i \zeta_i$, $\sigma_i^2 = \alpha_0(i/n) + \alpha_1(i/n) X_{i-1}^2 + \beta_1(i/n) \sigma_{i-1}^2$, where $\alpha_0(t) = 2.4 + 0.02\cos(\pi t)$, $\alpha_1(t) = 0.4 + 0.1\cos(\pi t)$ and $\beta_1(t) = 0.5 - 0.1\cos(\pi t)$,
\end{itemize}
where $\zeta_i$ is i.i.d. standard normal distributed. For estimation, we choose $K(x) = \frac{3}{4}(1 - x^2)\Ii_{[-1,1]}(x)$ to be the Epanechnikov kernel, $n = 500, 1000, 2000, 5000 $ for several different $b_n$ (the optimal bandwidths \reff{optimalbandwidthchoice} are also reported for model (a) and model (b)). For each situation, $N = 2000$ replications are performed and it is checked if the obtained SCB based on \reff{eq:boundary_corrected_bootstrap2} contains the true curves in $t \in (0,1)$. In both models we have $\Lambda(t) = I(t) = V(t)$ and therefore estimate $\Sigma_C^2(t) = C\tran I(t)^{-1} C$ via replacing $I(t)$ by $\hat I_{b_n}(t)$ from \reff{proposal_est_I}. We obtained the results given in Tables \ref{tab:data_arch} and \ref{tab:data_garch}. The estimation, for smaller sample sizes $n$,  sometimes may lead to difficulties since the optimization routine (\emph{optim} in programming language R) may not converge. We decided to discard these pathological cases for simplicity. It can be seen that the empirical coverage probabilities are reasonably close to the nominal level for bandwidths close to the optimal ones and they do not differ too much for other bandwidths as well.



\begin{table}[!ht]
\caption{Coverage probabilities of the SCB in (a) for $n = 500, 1000, 2000,5000$;}
\centering
\label{tab:data_arch}
\begin{tabular}{| l | l || c | c | c || c | c | c |}
\hline
& & \multicolumn{3}{c||}{$\alpha = 90\%$} &  \multicolumn{3}{c|}{$\alpha = 95\%$}\\
 $n$ & $b_n$ & $\alpha_0$ & $\alpha_1$ & $(\alpha_0,\alpha_1)\tran$ & $\alpha_0$ & $\alpha_1$ & $(\alpha_0,\alpha_1)\tran$ \\
\hline\hline
500 &  0.45 & 0.859 & 0.839 & 0.833 & 0.948 & 0.912 & 0.900 \\
\hline
 &  0.5 & 0.869 & 0.864 & 0.842 & 0.937 & 0.914 & 0.895 \\
\hline
 &  0.55 & 0.864 & 0.849 & 0.832 & 0.930 & 0.901 & 0.898\\
\hline\hline
1000 &  0.4 & 0.873 & 0.846 & 0.845 & 0.937 & 0.906 & 0.900 \\
\hline
 &  0.45 & 0.885 & 0.875 & 0.879 & 0.941 & 0.925 & 0.927 \\
\hline
 &  0.5 & 0.887 & 0.876 & 0.864 & 0.948 & 0.926 & 0.931\\
\hline
&  0.55 & 0.871 & 0.870 & 0.866 & 0.931 & 0.925 & 0.921 \\
\hline \hline 
2000 & $ 0.3$ & 0.893 & 0.861 & 0.868 & 0.946 & 0.924 & 0.930 \\
\hline
&  0.35 & 0.886 & 0.872 & 0.866 & 0.938 & 0.928 & 0.921\\
\hline
&  0.4 & 0.891 & 0.878 & 0.874 & 0.937 & 0.926 & 0.933\\
\hline
&  0.45 & 0.874 & 0.873 & 0.883 & 0.940 & 0.937 & 0.937\\
\hline \hline
5000 &  0.25 & 0.885 & 0.883 & 0.882 & 0.941 & 0.931 & 0.936\\
\hline
&  0.3 & 0.892 & 0.883 & 0.889 & 0.949 & 0.938 & 0.941\\
\hline
&  0.35 & 0.900 & 0.891 & 0.894 & 0.948 & 0.945 & 0.938\\
\hline
&  0.4 & 0.900 & 0.899 & 0.894 & 0.953 & 0.947 & 0.937\\
\hline
&  0.45 & 0.878 & 0.880 & 0.881 & 0.934 & 0.937 & 0.930\\
\hline
\end{tabular}
\end{table}


\begin{table}[!ht]
\caption{Coverage probabilities of the SCB in (b) for $n = 500, 1000, 2000, 5000$}
\centering
\label{tab:data_garch}
\begin{tabular}{| l | l || c | c | c | c || c | c | c | c |}
\hline
& &  \multicolumn{4}{c||}{$\alpha = 90\%$} &  \multicolumn{4}{c|}{$\alpha = 95\%$}\\
 $n$ & $b_n$ & $\alpha_0$ & $\alpha_1$ & $\beta_1$ & $(\alpha_0,\alpha_1,\beta_1)\tran$ & $\alpha_0$ & $\alpha_1$ & $\beta_1$ & $(\alpha_0,\alpha_1,\beta_1)\tran$ \\
\hline
500 & $0.55$ & 0.943 & 0.778 & 0.902 & 0.793 & 0.962 & 0.851 & 0.939  & 0.854 \\
\hline
 & $0.6$ & 0.946 & 0.815 & 0.922 & 0.837 & 0.962 & 0.875 & 0.961  & 0.89 \\
\hline
 & $0.65$ & 0.947 & 0.828 & 0.92 & 0.841 & 0.966 & 0.884 & 0.956  & 0.899 \\
\hline
 & $0.7$ & 0.939 & 0.849 & 0.922 & 0.853 & 0.962 & 0.897 & 0.957  & 0.903 \\
\hline\hline
1000 & $0.5$ & 0.943 & 0.857 & 0.936 & 0.864 & 0.974 & 0.913 & 0.966  & 0.914 \\
\hline
 & $0.55$ & 0.947 & 0.869 & 0.926 & 0.891 & 0.97 & 0.929 & 0.954  & 0.937 \\
\hline
 & $0.6$ & 0.946 & 0.884 & 0.943 & 0.908 & 0.971 & 0.937 & 0.966  & 0.948 \\
\hline 
 & $0.65$ & 0.944 & 0.873 & 0.921 & 0.889 & 0.965 & 0.921 & 0.951  & 0.934 \\
\hline \hline
2000 & $0.3$ & 0.944 & 0.822 & 0.924 & 0.869 & 0.967 & 0.89 & 0.961  & 0.92 \\
\hline
 & $0.35$ & 0.941 & 0.843 & 0.923 & 0.864 & 0.967 & 0.908 & 0.945  & 0.913 \\
\hline
 & $0.40$ & 0.958 & 0.846 & 0.92 & 0.894 & 0.972 & 0.91 & 0.96  & 0.943 \\
\hline 
 & $0.45$ & 0.957 & 0.875 & 0.931 & 0.897 & 0.98 & 0.928 & 0.965  & 0.942 \\
\hline 
 & $0.50$ & 0.946 & 0.887 & 0.952 & 0.911 & 0.978 & 0.938 & 0.979  & 0.952 \\
\hline 
\hline
5000 & $0.25$ & 0.955 & 0.855 & 0.936 & 0.886 & 0.974 & 0.903 & 0.959 & 0.929 \\
\hline
 & $0.30$ & 0.941 & 0.889 & 0.939 & 0.904 & 0.974 & 0.931 & 0.967 & 0.949 \\
\hline
 & $0.35$ & 0.949 & 0.903 & 0.941 & 0.903 & 0.972 & 0.95 & 0.975 & 0.946 \\
\hline
 & $0.40$ & 0.954 & 0.882 & 0.94 & 0.92 & 0.968 & 0.94 & 0.969 & 0.959 \\
\hline
 & $0.45$ & 0.966 & 0.892 & 0.956 & 0.909 & 0.98 & 0.946 & 0.98 & 0.96 \\
\hline
 & $0.50$ & 0.948 & 0.886 & 0.932 & 0.896 & 0.976 & 0.937 & 0.976 & 0.957  \\
\hline
\end{tabular}
\end{table}

\subsection{Applications}\label{ssc:applications}

In this section, we consider a few real-data applications of our procedure. As mentioned in Section \ref{sec:introduction}, there are abundant results in the literature about time-varying regression but the results for time-varying autoregressive conditional heteroscedastic models are scarce. Thus it is important to evaluate the performance of our constructed SCBs for these type of models in both theoretical and real data scenarios. Among the popular heteroscedastic models, usually GARCH type models are most difficult to estimate due to the recursion of the variance term. 

We consider two examples from the class of conditional heteroscedastic models with two types of financial datasets: one foreign exchange and one stock market daily pricing dataset. As \citet{fry08} found out, ARCH models have good forecasting ability for currency exchange type data whereas for data coming from the stock market, GARCH models are preferred. Typically, these daily closing price datasets show unit root behavior and thus instead of using the daily price data, we model the log-return data. The log-return is defined as follows and is close to the relative return

$$Y_i=\log P_i- \log P_{i-1}=\log \left( 1+ \frac{P_i-P_{i-1}}{P_{i-1}}\right) \approx \frac{P_i-P_{i-1}}{P_{i-1}}, $$
where $P_i$ is the closing price on the $i^{th}$ day. Because of the apparent time-varying nature of volatility these log-return data typically show, conditional heteroscedastic models are used for analysis and forecasting.

\subsubsection{Real data application I: USD/GBP rates}
For the first application, we consider a tvARCH($p$) model with $p=1,2$. It has the following form
\[
    Y_{i}^2 = \sigma_i^2 \zeta_i^2, \quad\quad \sigma_i^2 = \alpha_0(i/n) + \alpha_1(i/n) Y_{i-1}^2 + \ldots + \alpha_p(i/n)Y_{i-p}^2.
\]

Many different exchange rates from 1990-1999 for USD with other currencies were analyzed in \cite{fry08} using tvARCH($p$) models with $p=0,1,2$. We collect the data for USD-GBP exchange rates from \url{www.federalreserve.gov/releases/h10/Hist/default1999.htm}. The authors suggested choosing $p=1$ for USD-GBP exchange rates and we also decided to restrict ourselves to fitting a tvARCH(1) model only. Note that in principle, our simultaneous bands can be used to decide whether the additional parameter in a tvARCH(2) model is needed or not. The dataset has sample size 2514 and we use a  cross-validated bandwidth of $b_n=0.26$. We also provide the plots for the log-returns and an ACF plot of the squared time series that shows the  evidence of conditional heteroscedasticity.

\begin{figure} [!ht]  
\centering
\includegraphics[width=15.5cm,height=13.2cm]{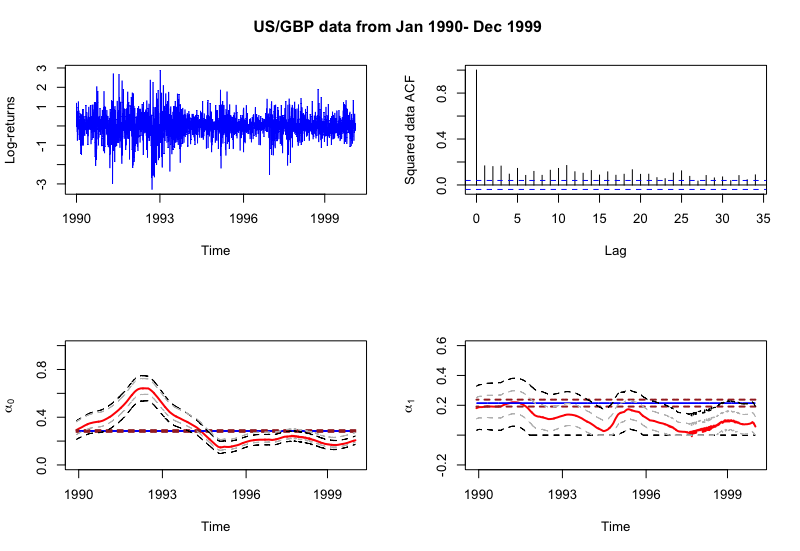}
\caption{Analysis of USD/GBP data from Jan 1990 to Dec 1999. Top left: Log-returns. Top right: ACF plot. Bottom panel: Estimates of the parameters $\alpha_0(\cdot)$, $\alpha_1(\cdot)$, respectively (red) with 95\% SCBs (dashed) and estimates of the parameters assuming constancy (blue).  We also provide pointwise bands (grey) and time-constant estimate $\pm$ standard error band (brown). Optimal bandwidth was 0.26 and $\alpha_0(\cdot)$ is time-varying}
\label{fig:usgbp} 
\end{figure}

\noindent Based on Figure \ref{fig:usgbp}, time-constancy for the  parameter curve $\alpha_0(\cdot)$ is rejected at 5\% level of significance. For $\alpha_1(\cdot)$, the estimate generally stays below the stationary fit. This can be explained by the geometry of the parameter space of the GARCH model, cf. \cite{hillebrand_garch}. Also, one can see from the plot of actual log-returns that there are large shocks from 1990 to 1993 compared to those seen in 1993-1999. This can be explained through the high (low) values shown for the estimated curve $\alpha_0(\cdot)$ for the time-period 1990-1993 (1993-1999).

\subsubsection{Real data application II: NASDAQ index data}
In the empirical analysis of log-returns for stock market data,  \citet{palm96} and others have found that lower order GARCH models account sufficiently for conditional heteroscedasticity. Moreover, GARCH(1,1) and in a very few cases GARCH(1,2) and GARCH(2,1) models are used and higher order GARCH models are typically not necessary. Another advantage of using GARCH(1,1) over ARCH($p$) models is that one does not need to worry about choosing a proper lag $p$ as GARCH(1,1) can be thought as an ARCH model with $p=\infty$. In this subsection, we implement a time-varying version of GARCH(1,1) and obtain the bootstrapped SCB. A tvGARCH(1,1) model has the following form:
\[
    Y_{i}^2 = \sigma_i^2 \zeta_i^2, \quad\quad \sigma_i^2 = \alpha_0(i/n) + \alpha_1(i/n) Y_{i-1}^2 + \beta_1(i/n) \sigma_{i-1}^2.
\]

As our second example, we choose to analyze the log returns of NASDAQ from January 2011 to December 2018. This is an important index in the US stock market. We collect this one and all other stock index datasets in later analysis from \url{www.investing.com}. Our cross-validated bandwidth is $b_n=0.405$ for this dataset of size $n=1751$. Since our simulations show excellent performance for sample sizes around $n=2000$ and the estimated parameter functions satisfy the parameter restriction $\sup_{0 \leq t \leq 1}(\hat{\beta}_1(t)^2+2 \hat{\alpha}_1(t)\hat{\beta}_1(t)+3\hat{\alpha}_1(t)^2)<1$, it is reasonable to say our simultaneous confidence bands would also be valid here. As one can see from Figure \ref{fig:nasdaq}, the time series shows significant lags in its ACF plot after squaring; indicating conditional heteroscedasticity.

\begin{figure} [!ht]  
\centering
\includegraphics[width=15.5cm,height=13.2cm]{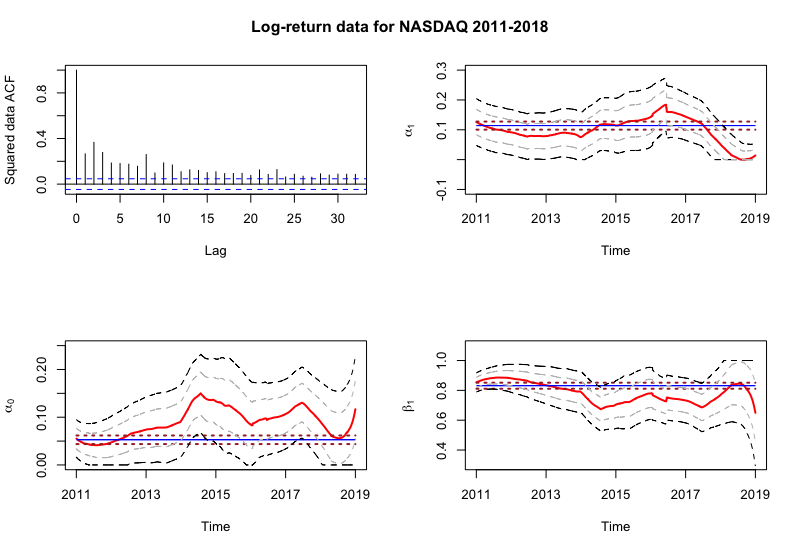}
\caption{Analysis of Nasdaq data from Jan 2011 to Dec 2018. Top left: ACF plot. Top right, bottom left, bottom right: Estimates of the parameters $\alpha_0(\cdot)$, $\alpha_1(\cdot)$ and $\beta_1(\cdot)$, respectively (red) with SCBs (dashed) and estimates of the parameters assuming constancy (blue). We also provide pointwise bands (grey) and time-constant estimate $\pm$ standard error band (brown). Optimal bandwidth was 0.405. A horizontal line does not pass through the SCB of $\alpha_1(\cdot)$ and thus it is time-varying. }
\label{fig:nasdaq} 
\end{figure}

One can see that the estimates for $\alpha_0(\cdot)$ is mostly above the corresponding time-constant fit. As mentioned in the caption $\alpha_1(\cdot)$ is time-varying since the SCB does not contain a horizontal line. The fit fluctuates around the time-constant fit. For $\beta_1(t)$, the time-varying fit is below the corresponding time-constant fit. Overall, since $\alpha_1(t)$ is deemed time-varying through this analysis the time-constant hypothesis can be rejected at 5\% level of significance.

\subsection{Forecasting volatility} It is a legitimate question whether   time-varying models in forecasting econometric time-series are more useful  compared to their time-constant analogue. Note that the main goal of this paper is not to build better forecasting models. The extension to predictive intervals from confidence intervals for conditional heteroscedastic models is not very straight-forward. Moreover, it is unclear how in-fill asymptotics discussed in this paper would extend to forecasting future trends or estimate time-varying functions with time arguments $t > 1$ in the future. Any asymptotic theory would need to consider the rescaling mechanism rigorously, keeping in mind the data observed up to a certain point. In this subsection we show empirically that time-varying models can indeed lead to better forecasts compared to time-constant analogues. 

\subsubsection{Short-range forecasts}
Following \cite{starica2003garch} and \cite{subbarao2008}, we show for a wide range of econometric datasets that time-varying models can provide better short range forecasts. We also allow multiple windows of forecasting and multiple start points to highlight why most of these datasets call for a time-varying fit. In the following Tables \ref{tab:amse500} and \ref{tab:amse1000}, we use ARCH(1) models for the forex datsets and GARCH(1,1) for the stock market indices.  Our POOS (pseudo-out-of-sample) evaluation of forecasting is chalked out as follows:

Define, for a $h-$step ahead forecasting scheme, $\bar{\sigma}^2_{t,t+h}= \frac{1}{h}\sum_{i=t+1}^{t+h}\sum\sigma_{i|t}^2$ where $\sigma_{i|t}^2$ are the $(i-t)$ step ahead forecasts of time-constant or time-varying fit at time $t$. We compare this with the  `realized' volatility $\bar{X}^2_{t,t+h}=\frac{1}{h}\sum_{i=t+1}^{t+h}X_i^2$. We then compute the aggregated measure for a start point $s$ as following: $$ AMSE = \frac{1}{n-h-s}\sum_{s+1}^{n-h} (\bar{\sigma}^2_{t,t+h} - \bar{X}^2_{t,t+h}).$$ 

For the forecasting horizon $h$ values, we choose $h \in \{25,50,75,100,150,200\}$ and start points $s \in \{500,1000\}$. Our forecasting method is the same as the one implemented in the fGARCH R-package. For the time-constant fit at time $t$ we use the data from 1 to $t$  to predict the $h$-step ahead forecast. For the time-varying fit however, it is unclear what the time-varying projection will be. Following \cite{subbarao2008}, we assume the last $m$ points to be stationary for a small $m$ and use that to obtain the future forecasts. Since $m$ is a tuning parameter, we choose the value that produces the minimum $AMSE$ over $m\in \{100,200, \ldots, 500\}$. 

\begin{table}[!ht]
\caption{Forecasting volatility: Short range forecast comparison of time-varying and time-constant model for startpoint at 500. TV and TC stands for time-varying and time-constant models}
\centering
\label{tab:amse500}
\begin{tabular}{| l || c | c || c | c || c | c || c | c || c | c || c | c ||}
\hline
&  \multicolumn{2}{c||}{ahead=25} &  \multicolumn{2}{c||}{ahead=50} & \multicolumn{2}{c||}{ahead=75} & \multicolumn{2}{c||}{ahead=100} & \multicolumn{2}{c||}{ahead=150} & \multicolumn{2}{c||}{ahead=200}   \\
 $Data$ & TC & TV & TC & TV & TC & TV & TC & TV & TC & TV & TC & TV \\
\hline\hline
USGBP & 0.101 & 0.07 & 0.085 & 0.057 & 0.078 & 0.053 & 0.075 & 0.052 & 0.07 & 0.048 & 0.067 & 0.046 \\
USCHF & 2.968 & 2.775 & 1.702 & 1.528 & 1.269 & 1.115 & 1.048 &  0.099 & 0.803 & 0.601 & 0.646 & 0.524 \\
USCAD & 0.029 & 0.026 & 0.024 & 0.024 & 0.022 & 0.023 & 0.02 & 0.021 & 0.017 & 0.018 & 0.015 & 0.017 \\
EURGBP & 0.083 & 0.074 & 0.057 & 0.048 & 0.044 & 0.036 & 0.039 & 0.032 & 0.034 & 0.029 & 0.028 & 0.025 \\
EURUSD & 0.052 & 0.033 & 0.044 & 0.028 & 0.041 & 0.027 & 0.042 & 0.028 & 0.038 & 0.032 & 0.036 & 0.038 \\
\hline \hline
SP Merval & 7.722 & 7.094 & 6.632 & 4.679 & 5.317 & 4.376 & 5.763 & 4.514 & 5.372 & 4.529 & 5.086 & 4.620 \\
\hline 
BSE & 3219 & 3038 & 2916 & 1901 & 2313 & 2040 & 2259 & 1321 & 2000 & 1106 & 1897 & 1038 \\ 
\hline 
SP500 old & 5.475 & 5.346 & 8.752 & 7.511 & 4.265 & 4.048 & 3.339 & 2.344 & 2.092 & 1.079 & 1.768 & 0.939 \\
\hline
SP500 & 0.319 & 0.042 & 0.379 & 0.372 & 0.249 & 0.324 & 0.304 & 0.306 & 0.277 & 0.280 & 0.269 & 0.274 \\
\hline
Dow Jones & 0.037 & 0.0365 & 0.316 & 0.277 & 0.223 & 0.231 & 0.215 & 0.200 & 0.185 & 0.183 & 0.169 & 0.176 \\
\hline
FTSE & 0.470 & 0.461 & 0.456 & 0.283 & 0.322 & 0.279 & 0.346 & 0.206 & 0.336 & 0.240 & 0.319 & 0.278 \\
\hline 
NASDAQ & 0.424 &0.399 & 0.498 & 0.237 & 0.283 & 0.233 & 0.404 & 0.175 & 0.388 & 0.200 & 0.364 & 0.218 \\
\hline 
Smallcap & 0.448 & 0.292 & 1.035 & 0.189 & 0.398 & 0.157 & 0.981 & 0.143 & 0.979 & 0.144 & 0.970 & 0.143 \\ 
\hline
NYSE & 0.318 & 0.245 & 0.470 & 0.144 & 0.277 & 0.148 & 0.418 & 0.107 & 0.413 & 0.121 & 0.404 & 0.136 \\
\hline
DAX & 0.903 & 0.800 & 1.142 & 0.559 & 0.639 & 0.552 & 0.992 & 0.501 & 0.961 & 0.580 & 0.920 & 0.697 \\
\hline 
Apple & 3.736 & 3.535 & 1.853 & 2.100 & 2.069 & 1.816 & 1.380 & 1.787 & 1.305 & 1.818 & 1.280 & 1.749 \\
\hline 
Microsoft & 2.729 & 3.388 & 1.600 & 1.999 & 1.156 & 1.898 & 0.981 & 1.335 & 0.766 & 1.084 & 0.664 & 0.973 \\
\hline AXP & 2.350 & 2.13 & 2.142 & 1.500 & 1.456 & 1.264 & 1.726 & 1.065 & 1.759 & 0.984 & 1.625 & 0.928 \\ \hline  
\end{tabular}
\end{table}

\begin{table}[!ht]
\caption{Forecasting volatility: Short range forecast comparison of time-varying and time-constant model for startpoint at 1000. TV and TC stands for time-varying and time-constant models}
\centering
\label{tab:amse1000}
\begin{tabular}{| l || c | c || c | c || c | c || c | c || c | c || c | c ||}
\hline
&  \multicolumn{2}{c||}{ahead=25} &  \multicolumn{2}{c||}{ahead=50} & \multicolumn{2}{c||}{ahead=75} & \multicolumn{2}{c||}{ahead=100} & \multicolumn{2}{c||}{ahead=150} & \multicolumn{2}{c||}{ahead=200}   \\
 $Data$ & TC & TV & TC & TV & TC & TV & TC & TV & TC & TV & TC & TV \\
\hline\hline
USGBP & 0.068 & 0.019 & 0.064 & 0.017 & 0.062 & 0.015 & 0.061 & 0.014  & 0.059 & 0.013 & 0.058 & 0.014\\ \hline
USCHF & 4.344 & 4.009 & 1.796 & 1.584 & 1.074 & 0.891 & 0.799 & 0.624 & 0.559 & 0.392 & 0.419 & 0.278 \\\hline
USCAD & 0.029 & 0.028 & 0.024 & 0.027 & 0.022 & 0.024 & 0.019 & 0.023 & 0.017 & 0.021 & 0.015 & 0.020 \\\hline
EURGBP & 0.019 & 0.012 & 0.017 & 0.011 & 0.016 & 0.010 & 0.015 & 0.009 & 0.015 & 0.010 & 0.014 & 0.011 \\\hline
EURUSD & 0.039 & 0.028 & 0.034 & 0.029 & 0.032 & 0.031 & 0.032 & 0.034 & 0.029 & 0.040 & 0.027 & 0.045 \\ \hline
\hline 
SP Merval & 10.39 & 8.714 & 9.432 & 6.12 & 7.666 & 6.029 & 8,522 & 5.883 & 7.908 & 5.857 & 7.052 & 5.739 \\
\hline 
BSE & 2478 & 2126 & 2798 & 1437 & 1506 & 949 & 2288 & 783 & 2263 & 771 & 2362 & 945.9\\
\hline
SP500 OlD & 8.104 & 7.964 & 12.57 & 11.26 & 6.242 & 5.949 & 4.292 & 3.469 & 2.307 & 1.463 & 1.741 & 1.216 \\
\hline
SP500 & 0.371 & 0.503 & 0.328 & 0.276 & 0.259 & 0.236 & 0.231 & 0.196 & 0.183 & 0.166 & 0.155 & 0.152 \\
\hline 
DowJones & 0.484 & 0.509 & 0.351 & 0.400 & 0.263 & 0.335 & 0.210 & 0.292 & 0.167 & 0.267 & 0.145 & 0.238 \\
\hline
FTSE & 0.644 & 0.639 & 0.514 & 0.378 & 0.409 & 0.394 & 0.358 & 0.270 & 0.347 & 0.353 & 0.314 & 0.419 \\  
\hline
NASDAQ & 0.519 & 0.505 & 0.482 & 0.345 & 0.332 & 0.377 & 0.340 & 0.267 & 0.312 & 0.338 & 0.284 & 0.365 \\
\hline
Smallcap & 0.457 & 0.374 & 0.671 & 0.243 & 0.338 & 0.243 & 0.558 & 0.229 & 0.488 & 0.232 & 0.433 & 0.206 \\
\hline
NYSE & 0.375 & 0.329 & 0.374 & 0.207 & 0.271 & 0.212 & 0.287 & 0.153 & 0.260 & 0.194 & 0.237 & 0.228 \\
\hline 
DAX & 1.245 & 1.123 & 1.130 & 0.755 & 0.842 & 0.708 & 0.925 & 0.762 & 0.897 & 0.896 & 0.832 & 1.102 \\
\hline
Apple & 2.465 & 2.275 & 1.799 & 1.279 & 1.530 & 1.206 & 1.374 & 0.871 & 1.276 & 1.036 & 1.237 & 1.052 \\
\hline
Microsoft & 2.868 & 3.668 & 1.720 & 2.295 & 1.214 & 2.147 & 1.057 & 1.596 & 0.851 & 1.335 & 0.770 & 1.232 \\
\hline
AXP & 2.837 & 2.668 & 1.749 & 1.462 & 1.395 & 1.347 & 1.151 & 0.825 & 0.982 & 0.681 & 0.834 & 0.581 \\
\hline \hline

\end{tabular}
\end{table}

One can see from Table \ref{tab:amse500} and \ref{tab:amse1000} how for a wide range of datasets, starting points and forecasting horizons the AMSE of the time-varying forecast is considerably smaller than the one for the time-constant version. We believe, even if prediction and forecasting is more important from an economist's perspective, this POOS analysis provides a strong motivation to choose a time-varying model over a time-constant one.

\subsubsection{One-step ahead forecasting and semi-timevarying models from inference}
We use the following one-step ahead $AMSE_1$, inspired from \cite{tvgarch2013} to validate our models:

$$AMSE_1= \frac{1}{n}\sum_{i=1}^n (X_i^2 - \hat{\sigma}^2(i/n))^2$$.

Here, $X_t$ are the log-returns and $\hat{\sigma}^2(\cdot)$ refers to the fitted model using ARCH(1) for foreign exchange datasets, and GARCH(1,1) for stock market indices. In each row we exhibit the best model in bold.

\begin{table}[!ht]
\caption{Forecasting volatility: Choosing between asymptotically 95\% correct time-varying, semi-time-varying and time-constant models}
\centering
\label{tab:data_forecast}
\begin{tabular}{| l || c | c || c || c| }
\hline
 Index (opt $\hat{b}_n$) & Time-varying & Time-constant & Time-constant coefficients & Semi-timevarying model  \\
\hline\hline
USGBP (0.26) &  0.5701109 & 0.5956046 & $\alpha_1$ &  \textbf{0.5697231}  \\
\hline
USCHF (0.24) &  \textbf{45.42274}  & 45.61378 & None & NA \\
\hline
USCAD (0.22) &  \textbf{0.1604907}  & 0.1672808 & $\alpha_1$ & 0.1616965\\
\hline 
EURGBP (0.34) & \textbf{0.778459}  & 0.789287 & $\alpha_1$ &  0.7837281\\
\hline
EURUSD (0.23) &  \textbf{0.3234517}  & 0.3399221 & None & NA\\
\hline \hline
SP Merval (0.36) & \textbf{60.62419}  & 62.02226 & $\alpha_1, \beta_1$ & 61.58843 \\
\hline 
SP500 old (0.29) &  \textbf{26.48272} & 26.12195 & None & NA \\
\hline 
SP500 recent (0.22) & 3.54   & \textbf{3.527918} & $\alpha_1, \beta_1$ & 3.529355\\
\hline
Wilshire 5k (0.35) & \textbf{1.943517} & 1.964926 & $\alpha_0, \alpha_1, \beta_1$ & 1.972347 \\
\hline 
FTSE (0.45) &  \textbf{3.650331}  & 3.687818 & None &  NA\\
\hline 
NASDAQ (0.405) & \textbf{5.4258546} & 5.446805  & $\alpha_1,\beta_1$ & 5.611163 \\
\hline 
Smallcap (0.35) &  \textbf{12.36647} & 12.56344 & $\alpha_1,\beta_1$ &  12.53796\\
\hline 
NYSE (0.24)&  \textbf{4.830424} & 4.841198 & $\alpha_0,\beta_1,\beta_1$ & 5.287622  \\
\hline 
DAX (0.44) &  \textbf{9.908042} & 9.987583 & None & NA  \\
\hline 
Apple (0.27) &  48.06839 & \textbf{46.4968} & None & NA  \\
\hline 
Microsoft (0.39)&  \textbf{38.08752} & 38.33715 & None & NA  \\
\hline 
AXP (0.28) &  \textbf{37.39755} & 37.90381 & $\alpha_1,\beta_1$ & 38.1988  \\

\hline 

\end{tabular}
\end{table}

Note that the examples exhibited here show that often the time-constant model has poor one-step ahead forecasting quality compared to their time-varying analogue. However, from Table \ref{tab:data_forecast}, one can see that in some of these time-varying models, we have a subset of parameters not rejecting the hypothesis of time-constancy. We suspect that setting a subset of parameters to be time-constant and allowing the rest to vary over time can improve forecasting over both models. One finding of this analysis is that for some of the datasets such as USGBP, the semi-time-varying model may outperform the time-constant model in terms of forecasting. Note that it is easy to tailor and find time-constant fits that allow for even better forecasts, but those models do not have proper confidence (in terms of closeness to the true model) for the already observed data. For the numbers in the above table on the semi-time-varying column, we kept the time-varying coefficients as they are and searched for the best (in terms of $AMSE_1$) constant for the time-constant coefficients among the horizontal lines that fit within the bands entirely. Here we would like also to put a word of caution: Note that the semi-time-varying analysis is somewhat adhoc. In principle, one can also re-run the optimization by fitting only a proper subset as time-varying and the rest as time-constant. We have checked this with multiple of the above datasets and the AMSE were not too different from that reported above. The major takeaway from this analysis remains that our time-varying fit, albeit not meant for prediction and constructed only for building simultaneous confidence intervals, can achieve better forecasts than the corresponding time-constant fits. Additionally, our theory can also lead to new models which have only a subset of coefficients time-varying.

\section{Acknowledgement}
We are grateful to the editor, associate editor and two anonymous referees for their valuable comments and feedback in different rounds which has helped in significantly improving this paper. This research was partially supported by NSF/DMS 1405410.

\appendix

\section{Technical tools and more general assumptions}\label{sec:tech}

\subsection{The functional dependence measure}

To state the structure of dependence we use throughout the appendix, we introduce a functional dependence measure on the underlying process using the idea of coupling as done in \citet{MR2172215}. Let $q > 0$ and let $Z_i$, $i\in\IZ$ a stationary process which admits the causal representation 
\begin{eqnarray}\label{eq:representation2}
Z_i=J(\zeta_i, \zeta_{i-1},\ldots).
\end{eqnarray}
\noindent Suppose that $(\zeta^{*}_i)_{i \in \IZ}$ is an independent copy of $(\zeta_i)_{i \in \IZ}$. For some random variable $Z$, let $\|Z\|_q := (\IE |Z|^q)^{1/q}$ denote the $\sL_q$-norm of $Z$. For $i \geq 0$, define the functional dependence measure 
\begin{eqnarray}\label{eq:fdm}
\delta^{Z}_{q}(i)= \| Z_i - Z_{i}^{*} \|_q,
\end{eqnarray}
\noindent where $Z_{i}^{*} = J(\sF_{i}^{*})$ with
\begin{eqnarray}\label{eq:copy of Fi}
\sF^{*}_i= (\zeta_i, \zeta_{i-1},  \cdots, \zeta_1,\zeta_0^{*}, \zeta_{-1}, \zeta_{-2}, \cdots )
\end{eqnarray}
 a coupled version of $\sF_i$ with $\zeta_0$ in $\sF_i$ replaced by an independent copy $\zeta_0^{*}$. 
Note that $\delta_q^{Z}(i)$ measures the dependence of $Z_i$ on $\zeta_{0}$ in terms of the $q$th moment. The tail cumulative dependence measure $\Delta^{Z}_{q}(j)$ for $j \geq 0$ is defined as 
\begin{eqnarray}\label{eq:cum fdm} 
\Delta^{Z}_{q}(j)=\displaystyle\sum_{i=j}^{\infty}\delta_{q}^{Z}(i). 
\end{eqnarray}

Let $\alpha > 0$. We define the adjusted dependence measure $\|\cdot \|_{q,\alpha}$ as follows (cf. \cite{zhangwu2017}): For a stationary process $Z_i = J(\zeta_i, \zeta_{i-1},...)$, let
\[
    \|Z\|_{q,\alpha} := \sup_{m\ge 0}(m+1)^{\alpha}\Delta^{Z}_q(m).
\]

\subsection{The class \texorpdfstring{$\sH(M,\chi,\bar C)$}{H(M,chi,C)} for Case 1} \label{ssc:smooth} To prove uniform convergence of $L_{n,b_n}^c$ and its derivatives w.r.t. $\theta$, we require the objective function $\ell$ introduced in Section \ref{section_estimator} to be Lipschitz continuous in the direction of $\theta$ and to grow at most polynomially in the direction of $z = (y,x)$, where the degree is measured by real numbers $M \ge 1$. We will therefore ask $\ell$ and its derivatives to be in the class $\sH(M,\chi,\bar C)$ which is now defined.

Let $\chi = (\chi_i)_{i=1,2,\ldots}$ be a sequence of nonnegative real numbers with $|\chi|_1 := \sum_{i=1}^{\infty}\chi_i < \infty$, and $\bar C > 0$ be some constant. Let $|x|_{\chi,s} := (\sum_{j=1}^{\infty}\chi_j |x|^s)^{1/s}$, and put $|x|_{\chi} := |x|_{\chi,1}$. Put $\hat \chi =(1,\chi)$.

A function $g: \IR^{\IN} \times \Theta \to \IR$ is in $\sH(M,\chi,\bar C)$ if $\sup_{\theta \in \Theta}|g(0,\theta)| \le \bar C$,
    \[
        \sup_{z}\sup_{\theta \not= \theta'}\frac{|g(z,\theta) - g(z,\theta')|}{|\theta-\theta'|_1 (1 + |z|_{\hat\chi}^M)} \le \bar C
    \]
    and
    \begin{eqnarray*}
        \sup_{\theta}\sup_{z\not= z'}\frac{|g(z,\theta) - g(z',\theta)|}{|z-z'|_{\hat\chi}\cdot (1 + |z|_{\hat\chi}^{M-1} + |z'|_{\hat \chi}^{M-1})} \le \bar C.
    \end{eqnarray*}
    If $g$ is vector- or matrix-valued, $g \in \sH(M,\chi, \bar C)$ means that every component of $g$ is in $\sH(M,\chi,\bar C)$.

\subsection{A more general set of assumptions}\label{sec:assumption} We show the main theorems under a more general set of assumptions which contain more `high-level' properties of the process $Y_i$ and the objective function $\ell$. These assumptions can therefore be seen as an `intermediate step' in our proofs: We first derive these high-level properties of the processes $Y_i$ which hold under the more specific Assumptions \ref{ass:case1} and \ref{ass:case2}. These high-level assumptions are given in Assumption \ref{ass1} and \ref{ass3}. In Section \ref{sec:proofs_assumptions} at the end of the supplementary material we show that Assumption \ref{ass:case1} implies Assumption \ref{ass1} and Assumption \ref{ass:case2} implies Assumption \ref{ass3}. To keep the presentation concise, we only state Assumption \ref{ass1} here in the main part of the paper.

For each $t \in [0,1]$, let $\tilde Y_i(t) = \tilde J(t,\sF_i)$ with some measurable function $\tilde J$. This process serves as a stationary approximation of the observed process $Y_i$ (cf. the introduction in Section \ref{sec:main results}). The necessary properties are made rigorous in Assumption \ref{ass1} (for Case 1) or Assumption \ref{ass3} (for Case 2) below. Recall that $X_i := (Y_{j}: -\infty < j \le i-1)$ and $Z_i := (Y_i, X_i)$, and $\tilde X_i(t) := (\tilde Y_j(t): -\infty < j\le i-1)$,  $\tilde Z_i(t) := (\tilde Y_i(t), \tilde X_{i-1}(t))$.

\begin{assumption}\label{ass1}
Suppose that for some $r \ge 2$ and some $\gamma > 1$,
    \begin{enumerate}[label=(A\arabic*),ref=(A\arabic*)]
        \item\label{ass1_smooth} (Smoothness in $\theta$-direction) $\ell$ is twice continuously differentiable w.r.t. $\theta$. It holds that $\ell, \nabla_{\theta} \ell, \nabla^2_{\theta} \ell \in \sH(M,\chi,\bar C)$ for some $M\ge 1$, $\bar C > 0$ and $\chi = (\chi_i)_{i=1,2,\ldots}$ with $\chi_i = O(i^{-(1+\gamma)})$.
        \item\label{ass1_theta} (Assumptions on unknown parameter curve) $\Theta$ is compact and for all $t \in [0,1]$, $\theta(t)$ lies in the interior of $\Theta$. Each component of $\theta(\cdot)$ is in $C^3[0,1]$.
        \item\label{ass1_model} (Correct model specification) For all $t \in [0,1]$, the function $\theta \mapsto L(t,\theta) := \IE \ell(\tilde Z_0(t), \theta)$ is uniquely minimized by $\theta(t)$.
        \item\label{ass1_matrix} The eigenvalues of the matrices $V(t)$, $I(t)$ and $\Lambda(t)$ defined in \reff{eq:vdef_paper}, \reff{eq:idef_paper} and \reff{eq:lambdadef_paper} are bounded from below by some $\lambda_0 > 0$, uniformly in $t$.
    \item\label{ass1_stat} (Stationary approximation) There exist $C_A,C_B,D > 0$ such that for all $n\in\IN$, $i = 1,\ldots,n$, $t,t' \in [0,1]$:
        \[
            \max\{\|Y_i\|_{rM}, \ \|\tilde Y_0(t)\|_{rM}\} \le D,
        \]
        \begin{equation}
            \|Y_{i} - \tilde Y_{i}(i/n)\|_{rM} \le C_A n^{-1},\quad\quad \|\tilde Y_{0}(t) - \tilde Y_{0}(t')\|_{rM} \le C_B |t-t'|\label{eq:ass1_stat1}
        \end{equation}
        \item\label{ass1_dep} (Weak dependence) $\sup_{t\in [0,1]}\delta_{rM}^{\tilde Y(t)}(k) = O(k^{-(1+\gamma)})$.
    \end{enumerate}
\end{assumption}

\section{Proofs of the theorems}\label{sec:proofs}

In this section, we show the validity of the theorems stated in the paper under the more general Assumption \ref{ass1} or Assumption \ref{ass3}, respectively. We make use of the elementary lemmas derived in Section \ref{sec:elementaryresults}. Let us introduce some notation. For $\eta = (\eta_1,\eta_2) \in \Theta \times (\Theta'\cdot b_n) =: E_n$, define
\[
    L^{\circ,c}_{n,b_n}(t,\eta) := (nb_n)^{-1}\sum_{i=1}^{n}K_{b_n}(i/n-t)\ell(Z_i^c, \eta_1 + \eta_2(i/n-t)b_n^{-1})
\]
and $\hat L_{n,b_n}^{\circ}$, $L_{n,b_n}^{\circ}$ similarly as  $L_{n,b_n}^{\circ,c}$ but with $Z_i^c$ replaced by $\tilde Z_i(i/n)$ or $Z_i$, respectively. 
Furthermore, put
\[
    \eta_{b_n}(t) = (\theta(t)\tran, b_n \theta'(t)\tran)\tran.
\]
$\eta_{b_n}(t)$ is estimated by
\[
    \hat \eta_{b_n}(t) = (\hat \theta_{b_n}(t)\tran, b_n \widehat \theta_{b_n}'(t)\tran)\tran \in \argmin_{\eta \in E_n}L_{n,b_n}^{\circ,c}(t,\eta).
\]

\subsection{Proof of Theorem \ref{theorem:bahadur_paper}}

\begin{proof}[Proof of Theorem \ref{theorem:bahadur_paper}]
    By Proposition \ref{example:tvrec},  Assumption \ref{ass1} is fulfilled with some $r > 2$. By  Proposition \ref{example:garch}, Assumption \ref{ass3} is fulfilled with some $r > 2$. In the following, we will only use the more general Assumptions \ref{ass1} or \ref{ass3}, respectively.
    
By Lemma \ref{lemma:empprocess2}(i),(iii)(a) and Lemma \ref{lemma:bias}(a) (in case Assumption \ref{ass1} holds) or Lemma \ref{lemma:empprocess2}(i),(iii)(c) and Lemma \ref{lemma:bias}(a)  (if Assumption \ref{ass3} holds) applied to $g = \ell$, we have that
\[
    \sup_{t\in \sT_n}\sup_{\eta \in E_n}|L_{n,b_n}^{\circ,c}(t,\eta) - L^{\circ}(t,\eta)| = O_{\IP}(\beta_n + (nb_n)^{-1}) + O(b_n),
\]
where
\[
    L^{\circ}(t,\eta) := \int_{-1}^{1}K(x) L(t,\eta_1 + \eta_2 x) dx.
\]
That is, $L_{n,b_n}^{\circ,c}(t,\eta)$ converges  to $L^{\circ}(t,\eta)$ uniformly in $t,\eta$ if $b_n =o(1)$ and $\beta_n = o(1)$. By Lemma \ref{lemma:hoelder}, $\eta \mapsto L^{\circ}(t,\eta)$ is Lipschitz continuous in both components. Since $\theta(t)$ is the unique minimizer of $\theta \mapsto L(t,\theta)$, we have that $\eta(t) = (\theta(t)\tran,0)\tran$ is the unique minimizer of $\eta \mapsto L^{\circ}(t,\eta)$.
Since $\hat \eta_{b_n}(t) = (\hat \theta_{b_n}(t)\tran, b_n \widehat\theta_{b_n}'(t)\tran)\tran$ is a minimizer of $L_{n,b_n}^{\circ,c}(t,\eta)$, standard arguments yield
\[
     \sup_{t \in \sT_n}|\hat \eta_{b_n}(t) - \eta(t)| = o_{\IP}(1).
\]
Since $\sup_{t\in \sT_n}|\eta(t) - \eta_{b_n}(t)| = o(1)$, we have
\begin{equation}
    \sup_{t \in \sT_n}|\hat \eta_{b_n}(t) - \eta(t)| = o_{\IP}(1).\label{mle:consistency}
\end{equation}
Thus for $n$ large enough, $\hat \eta_{b_n}(t)$ is in the interior of $E_n$ uniformly in $t$. By a Taylor expansion, we obtain for each $t \in \sT_n$:
\begin{equation}
    \hat \eta_{b_n}(t) - \eta_{b_n}(t) = -\big[ V^{\circ}(t) + R_{n,b_n}(t) \big]^{-1}\cdot \nabla_{\eta} L_{n,b_n}^{\circ,c}(t,\eta_{b_n}(t)),\label{eq:mlediff2}
\end{equation}
where
\[
    R_{n,b_n}(t) = \nabla_{\eta}^2 L_{n,b_n}^{\circ,c}(t,\bar \eta(t)) - V^{\circ}(t)
\]
with some $\bar \eta(t) \in E_n$ satisfying $|\bar \eta(t) - \eta_{b_n}(t)|_1 \le |\hat \eta_{b_n}(t) - \eta_{b_n}(t)|_1$, and
 \begin{equation}
     V^{\circ}(t) := \begin{pmatrix}
        1 & 0\\
        0 & \mu_{K,2}
    \end{pmatrix} \otimes V(t).\label{vcircdef_t}
\end{equation}

By Lemma \ref{lemma:empprocess2}(i),(iii)(a) and Lemma \ref{lemma:bias}(a) (if Assumption \ref{ass1} holds) or Lemma \ref{lemma:empprocess2}(i),(iii)(c) and Lemma \ref{lemma:bias}(b) (if Assumption \ref{ass3} holds) applied to $g = \nabla_{\theta}^2\ell$ and $\hat K(x) = K(x)$, $\hat K(x) = K(x)x$ or $\hat K(x) = K(x)x^2$, respectively, we have for some fixed $\iota' > 0$:
\begin{equation}
     \sup_{t\in \sT_n}\sup_{|\eta - \eta_{b_n}(t)| < \iota'}|\nabla_{\eta}^2 L_{n,b_n}^{\circ,c}(t,\eta) - V^{\circ}(t,\eta)| = O_{\IP}(\beta_n + (nb_n)^{-1}) + O(b_n),\label{eq:glmkonv2}
\end{equation}
where
\begin{equation}
    V^{\circ}(t,\eta) = \int_{-1}^{1}K(x) \begin{pmatrix}
    1 & x\\
    x & x^2
    \end{pmatrix} \otimes V(t,\eta_1 + \eta_2 x) dx.\label{eq:vcircdef_teta}
\end{equation}
For the moment, let $\tilde h_i(t) = \nabla_{\theta}\ell(\tilde Z_i(t),\theta(t))$. Note that $\IE \tilde h_0(t) = \IE \nabla_{\theta}\ell(\tilde Z_0(t), \theta(t)) = 0$ by Assumption \ref{ass1}\ref{ass1_model}, \ref{ass1_smooth} (or Assumption \ref{ass3}\ref{ass3_model}, \ref{ass3_smooth}).

By Lemma \ref{lemma:depmeasure}(i) (if Assumption \ref{ass1} holds) or Lemma \ref{lemma:depmeasuregarch}(i) (if Assumption \ref{ass3} holds), we have $\sup_{t}\delta_{2+\varsigma}^{\tilde h(t)_j}(k) = \sup_{t}\delta_{2+\varsigma}^{\nabla_{\theta_j}\ell(\tilde Z(t),\theta(t))}(k) = O(k^{-(1+\gamma)})$ for each $j = 1,\ldots,d_{\Theta}$. Using Lemma \ref{lemma:hoelder} (if Assumption \ref{ass1} holds) or Lemma \ref{lemma:hoeldergarch} (if Assumption \ref{ass3} holds), we see that the conditions of Lemma \ref{lemma:localscb} are fulfilled and thus, applied to $\tilde h_i(t)$,
\begin{equation}
    \sup_{t\in \sT_n}\big|(nb_n)^{-1}\sum_{i=1}^{n}K_{b_n}(i/n-t) \nabla_{\theta}\ell(\tilde Z_i(i/n), \theta(i/n))\big| = O_{\IP}( (nb_n)^{-1/2}\log(n)).\label{eq:glmkonv100}
\end{equation}
With Lemma \ref{lemma:biasapproxbahadur}, we obtain 
\[
    \sup_{t\in \sT_n}\big|\nabla_{\eta}\hat L_{n,b_n}^{\circ}(t,\eta_{b_n}(t)) - \IE \nabla_{\eta}\hat L_{n,b_n}^{\circ}(t,\eta_{b_n}(t))\big| = O_{\IP}((nb_n)^{-1/2}\log(n)  + \beta_n b_n^2).
\]
Since $\IE \nabla_{\theta} \ell(\tilde Z_0(t), \theta(t)) = 0$, we obtain with Lemma \ref{lemma:bias2} (a bias expansion result) and Lemma \ref{lemma:empprocess2}(i):
\begin{equation}
    \sup_{t\in \sT_n}|\nabla_{\eta_j}L_{n,b_n}^{\circ,c}(t,\eta_{b_n}(t))| = O_{\IP}((nb_n)^{-1/2}\log(n) + (nb_n)^{-1} + \beta_n b_n^2 + b_n^{1+j}),\label{eq:nablal}
\end{equation}
where $j = 1,2$. Since $\theta \mapsto V(t,\theta) = \IE \nabla^2_{\theta}\ell(\tilde Z_0(t), \theta)$ is Lipschitz continuous (apply Lemma \ref{lemma:hoelder} in case of Assumption \ref{ass1} or Lemma \ref{lemma:hoeldergarch} in case of Assumption \ref{ass3} to $\nabla^2 \ell$), the same holds for $\eta \mapsto V^{\circ}(t,\eta)$. We conclude that with some constant $C > 0$,
\begin{equation}
    \sup_{t\in \sT_n}|R_{n,b_n}(t)| \le \sup_{t\in \sT_n}\sup_{\eta \in E_n}|\nabla^2_{\eta}L_{n,b_n}^{\circ,c}(t,\eta) - V^{\circ}(t,\eta)| + C \sup_{t\in \sT_n}|\hat \eta_{b_n}(t) - \eta_{b_n}(t)|.\label{eq:rnb}
\end{equation}
Inserting \reff{eq:nablal}, \reff{eq:rnb} and \reff{mle:consistency} into \reff{eq:mlediff2}, we obtain
\begin{equation}
    \sup_{t\in \sT_n}|\hat \eta_{b_n,j}(t) - \eta_{b_n,j}(t)| = O_{\IP}((nb_n)^{-1/2}\log(n) + (nb_n)^{-1} + \beta_n b_n^2 + b_n^{1+j}),\label{eq:mlerate}
\end{equation}
where $j = 1,2$. Inserting \reff{eq:mlerate}, \reff{eq:glmkonv2} into \reff{eq:rnb}, we get $\sup_{t\in \sT_n}|R_{n,b_n}(t)| = O_{\IP}(\beta_n + b_n + (nb_n)^{-1})$. Together with
\begin{eqnarray*}
    && \big|V^{\circ}(t) \big(\hat \eta_{b_n}(t) - \eta_{b_n}(t)\big)  - \nabla L_{n,b_n}^{\circ,c}(t,\eta_{b_n}(t))\big|\\
    &\le& \big|\big[I_{2k\times 2k} +  V^{\circ}(t)^{-1} R_{n,b_n}(t)\big]^{-1} - I_{2k\times 2k}^{-1}\big|\cdot |\nabla_{\eta} L_{n,b_n}^{\circ,c}(t,\eta_{_n}(t))|\\
    &\le& \big|\big[I_{2k\times 2k} +  V^{\circ}(t)^{-1}R_{n,b_n}(t)\big]^{-1}\big| \cdot \big|V^{\circ}(t)^{-1}R_{n,b_n}(t)\big|\cdot |\nabla_{\eta} L_{n,b_n}^{\circ,c}(t,\eta_{b_n}(t))|,
\end{eqnarray*}
and \reff{eq:nablal} we obtain the assertion \reff{eq:bahadur_paper}. The other result \reff{eq:bahadur_localstat_paper} follows from Lemma \ref{lemma:empprocess2}(i), Lemma \ref{lemma:biasapproxbahadur} and Lemma \ref{lemma:bias2}.
\end{proof}

\subsection{Proof of Theorem \ref{th:scb_paper}}

In this section, we prove Theorem \ref{th:scb_paper} by proving its assertion under the morel general  Assumption \ref{ass1} or \ref{ass3}, respectively. We first cite some auxiliary results: Lemma \ref{lem:extreme value} is a confidence band result for i.i.d. Gaussian vectors, Lemma \ref{lemma:localscb} extends this result to sums of dependent variables by using a Gaussian approximation result (Theorem \ref{theorem:gaussapprox_localstat}, cf. \cite{MR2827528}). Theorem \ref{th:scb_paper} is then proven by applying Lemma \ref{lemma:localscb} to a Bahadur representation of $\hat \theta_{b_n}$ from Theorem \ref{theorem:bahadur_paper}.

From Lemma 1 in \cite{zhouwu10}, we adopt the following SCB result for Gaussian random vectors: 
\begin{lemma}\label{lem:extreme value}
Let $F_n(t)=\sum_{i=1}^n \hat K_{b_n}(t_i-t)V_i$, where $V_i, i \in \IZ$ are i.i.d. $N(0,I_{s\times s})$. $b_n \to 0$ and $nb_n/\log^2(n) \to \infty$. Let $m^*= 1/b_n.$ Then
\begin{equation}
\lim_{n \to \infty} \IP\Big( \frac{1}{\sigma_{\hat K,0} \sqrt{nb_n}}\sup_{t \in \sT_n}|F_n(t)|- B_{\hat K}(m^*) \leq \frac{u}{\sqrt{2 \log(m^*)}}\Big)= \exp (-2 \exp(-u) ). \label{eq:extreme value result}
\end{equation}

\noindent where $B_{\hat K}$ is defined in (\ref{eq:Bk}).
\end{lemma}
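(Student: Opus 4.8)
The plan is to recognize $\{F_n(t)\}_{t \in \sT_n}$ as a vector-valued Gaussian process and to read off its Gumbel extreme-value limit from the theory for the Euclidean norm of kernel-smoothed Gaussian processes. Since the $V_i \sim N(0,I_{s\times s})$ are i.i.d., for each fixed $t$ the vector $F_n(t)$ is centered Gaussian with covariance $\big(\sum_{i=1}^n \hat K_{b_n}(t_i-t)^2\big) I_{s\times s}$. First I would normalize: setting $G_n(t) := F_n(t)/(\sigma_{\hat K,0}\sqrt{nb_n})$ and using a Riemann-sum approximation together with $nb_n/\log^2(n)\to\infty$, one checks that $\frac{1}{nb_n}\sum_{i=1}^n \hat K_{b_n}(t_i-t)^2 \to \int \hat K(x)^2\,dx = \sigma_{\hat K,0}^2$ uniformly over $t \in \sT_n$, so that each coordinate of $G_n(t)$ is asymptotically standard normal. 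Thus $|G_n(t)|$ behaves like a $\chi_s$-process and the quantity of interest is $\sup_{t\in\sT_n}|G_n(t)|$.

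Next I would establish the local covariance structure that fixes the extreme-value constants. A second Riemann-sum computation shows that the coordinatewise correlation between $G_n(t)$ and $G_n(t')$ is asymptotically $\rho((t-t')/b_n)$ with $\rho(h) = \sigma_{\hat K,0}^{-2}\int \hat K(x)\hat K(x-h)\,dx$, so $G_n$ is essentially a rescaled stationary Gaussian field with correlation length $b_n$. Expanding near the origin gives $\rho(h) = 1 - \tfrac{1}{2}\lambda h^2 + o(h^2)$ with second spectral moment $\lambda = \sigma_{\hat K,0}^{-2}\int_{-1}^1 |\hat K'(u)|^2\,du$; this is exactly the quantity entering $C_{\hat K}$ through the factor $\{\int |\hat K'|^2/(\sigma_{\hat K,0}^2 \pi)\}^{1/2}$, while the dimension $s$ enters through the $\Gamma(s/2)$ normalization and the $(s/2-1/2)\log\log(m^*)$ term in $B_{\hat K}$, which are the classical normalizing constants for the maximum of a $\chi_s$-process.

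The core extreme-value step is a Poisson approximation for the up-crossings of a high level $u_n$ by $|G_n(t)|$ over $\sT_n$, which has effective length $\approx m^* = 1/b_n$ in $b_n$-units. The expected number of up-crossings is evaluated by a Kac--Rice / Lindgren-type argument for chi-processes, the up-crossings are shown to be asymptotically independent across disjoint blocks, and scaling the level as $u_n = B_{\hat K}(m^*) + u/\sqrt{2\log(m^*)}$ turns the Poisson limit into the double-exponential law $\exp(-2\exp(-u))$. This is precisely the computation carried out in Lemma 1 of \cite{zhouwu10} (there in the scalar boundary-corrected setting); the only new ingredient is the routine vectorization to $s$ coordinates, which replaces the scalar up-crossing intensity by its $\chi_s$ analogue and produces the $\Gamma(s/2)$ factor, so I would invoke that result directly.

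The hard part will be the passage from the discrete sum $F_n(t) = \sum_{i=1}^n \hat K_{b_n}(t_i-t)V_i$ to the continuous-parameter process whose supremum obeys the clean asymptotics, together with the edge effects near $t \in \{b_n, 1-b_n\}$. One must control the oscillation of $t \mapsto G_n(t)$ on the $b_n$-scale, using Lipschitz continuity of $\hat K$ and a chaining/entropy bound, finely enough that the supremum over continuous $t$ is captured by a grid of $O(1/b_n)$ asymptotically independent points, and verify that the boundary blocks are negligible. Under $nb_n/\log^2(n)\to\infty$ these approximations hold with the required uniformity, and the Gumbel limit of \cite{zhouwu10} transfers to the present vector-valued statement.
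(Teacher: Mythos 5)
The paper gives no independent proof of this lemma: it simply adopts the statement from Lemma~1 of Zhou and Wu (2010), which already covers the vector-valued ($\chi_s$) case with exactly these normalizing constants. Your sketch of the underlying extreme-value argument (normalization, local covariance with second spectral moment $\sigma_{\hat K,0}^{-2}\int|\hat K'|^2$, Poisson approximation for up-crossings, discretization) is a faithful account of what lies behind that citation, and since you too ultimately invoke Zhou--Wu's Lemma~1 directly, your route coincides with the paper's.
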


 For the following results, let us assume that there exists some measurable function $\tilde H(\cdot,\cdot)$ such that for each $t \in [0,1]$, $\tilde h_i(t) = \tilde H(t,\sF_i) \in \IR^s$ is well-defined. Put $S_{\tilde h}(i) := \sum_{j=1}^{i}\tilde h_j(j/n)$.
\begin{theorem}[Theorem 1 and Corollary 2 from \citet{MR2827528}]\label{theorem:gaussapprox_localstat} Assume that for each component $j = 1,\ldots,s$:
\begin{enumerate}
    \item[(a)] $\sup_{t \in [0,1]}\|\tilde h_{0}(t)_j\|_{2+\varsigma} < \infty$,
    \item[(b)] $\sup_{t\not= t'\in [0,1]}\|\tilde h_{0}(t)_j - \tilde h_{0}(t')_j\|_2 / |t-t'| < \infty$,
    \item[(c)] $\sup_{t\in[0,1]}\delta_{2+\varsigma}^{\tilde h(t)_j}(k) = O(k^{-(\gamma+1)})$ with some $\gamma \ge 1$. 
\end{enumerate}
for some $\varsigma\leq 2$. Then on a richer probability space, there are i.i.d. $V_1,V_2,\ldots \sim N(0, I_{s\times s})$ and a process $S_{\tilde h}^{0}(i) = \sum_{j=1}^{i}\Sigma_{\tilde h}(j/n) V_j$ such that $(S_{\tilde h}(i))_{i=1,\ldots,n} \overset{d}{=} (S_{\tilde h}^{0}(i))_{i=1,\ldots,n}$ and
\[
    \max_{i=1,\ldots,n}|S_{\tilde h}(i) - S_{\tilde h}^{0}(i)| = O_{\IP}(\pi_n).
\]
where 
\begin{equation}
\pi_n = n^{(2 \varsigma+2\gamma+\gamma\varsigma)/(2 \varsigma+8\gamma+4\gamma\varsigma)}\log(n)^{2\gamma(3+\varsigma)/(\varsigma+4\gamma+2 \gamma \varsigma)} \label{eq:pin}
\end{equation}

and
\[
    \Sigma_{\tilde h}(t) = \big(\sum_{j\in\IZ}\IE[\tilde h_0(t) \tilde h_j(t)\tran]\big)^{1/2}.
\]
\end{theorem}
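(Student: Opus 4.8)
Since this statement is quoted essentially verbatim from \citet{MR2827528}, the plan is to reproduce the blocking-plus-coupling argument underlying their Theorem~1 and Corollary~2, adapted to the locally stationary array $\tilde h_i(i/n) = \tilde H(i/n,\sF_i)$. First I would reduce to an $m$-dependent array: using the causal representation and the decay $\sup_{t}\delta_{2+\varsigma}^{\tilde h(t)_j}(k) = O(k^{-(\gamma+1)})$ from (c), I replace $\tilde h_i(i/n)$ by its projection onto the innovations $\zeta_{i-m+1},\ldots,\zeta_i$. The tail $\sup_t \Delta_{2+\varsigma}^{\tilde h(t)}(m) = O(m^{-\gamma})$ then bounds the $\sL_{2+\varsigma}$ error of this truncation on partial sums, and this is the first of several terms to be balanced at the end.

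Next I would partition $\{1,\ldots,n\}$ into alternating large blocks of length $p$ and small blocks of length $q$ with $q \ge m$ and $q/p \to 0$. Because the truncated array is $m$-dependent and the small blocks have length at least $m$, the large-block sums become \emph{independent}. Within each large block I would use the Lipschitz-in-time property (b) to treat the long-run covariance as locally constant, so that the $k$th large-block sum has covariance close to $p\,\Sigma_{\tilde h}(t_k)^2$ for a representative time $t_k$; the replacement error is $O(p^2/n)$ per block by (b). The small-block sums I would show are uniformly negligible of the desired order via a maximal inequality for dependent sums (Burkholder/Rosenthal combined with the functional dependence measure, in the spirit of Lemma \ref{lemma:empprocess2}).

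The core step is to couple the independent large-block sums to Gaussian vectors with the prescribed covariances. Here I would invoke a multivariate strong approximation for independent, non-identically distributed random vectors with $2+\varsigma$ moments --- Zaitsev's and Einmahl's refinements of the Koml\'os--Major--Tusn\'ady theorem --- producing $V_k \sim N(0,\mathrm{Id}_s)$ on a richer space with $\max_k |(\text{block sum}) - \sqrt{p}\,\Sigma_{\tilde h}(t_k) V_k|$ controlled in probability. Finally I would redistribute each blockwise Gaussian $\sqrt{p}\,\Sigma_{\tilde h}(t_k) V_k$ into a sum $\sum_{j\ \mathrm{in\ block}} \Sigma_{\tilde h}(j/n) V_j$, again paying a Lipschitz cost from (b), and assemble everything into $S_{\tilde h}^{0}(i) = \sum_{j=1}^{i}\Sigma_{\tilde h}(j/n) V_j$; the distributional identity $(S_{\tilde h}(i)) \overset{d}{=} (S_{\tilde h}^{0}(i))$ follows from the standard ``rich probability space'' construction.

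The hard part will be the simultaneous optimization: the final rate $\pi_n$ in \reff{eq:pin} emerges only after balancing four competing errors --- the $m$-dependence truncation of order $m^{-\gamma}$, the small-block remainder (scaling in $q$ and the block count), the KMT/Zaitsev coupling rate under $2+\varsigma$ moments, and the covariance-smoothing error $O(p^2/n)$ --- against the parameters $p$, $q$ and the truncation lag $m$. Forcing the powers of $n$ and $\log n$ to collapse exactly to the exponents $(2\varsigma+2\gamma+\gamma\varsigma)/(2\varsigma+8\gamma+4\gamma\varsigma)$ and $2\gamma(3+\varsigma)/(\varsigma+4\gamma+2\gamma\varsigma)$ is the delicate bookkeeping that carries the real content of the theorem; the structural steps above are comparatively standard.
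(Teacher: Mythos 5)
The paper does not prove this statement at all: it is imported verbatim (as the bracketed attribution says) from Theorem~1 and Corollary~2 of the cited reference, and the authors simply invoke it in the proofs of Lemma~\ref{lemma:localscb} and Proposition~\ref{prop:prop2}. So the comparison is really against the proof in that external paper, and there your outline does track the standard strategy used here: an $m$-dependence (conditional-expectation) approximation controlled by the tail dependence measure from (c), a big-block/small-block partition so that the large-block sums are independent, the Lipschitz condition (b) to freeze the local long-run covariance at $p\,\Sigma_{\tilde h}(t_k)^2$ within each block, a multivariate strong approximation for independent non-identically distributed vectors with $2+\varsigma$ moments (Zaitsev/Einmahl refinements of KMT, which is also where the restriction $\varsigma\le 2$ enters), and a redistribution of the blockwise Gaussians into $\sum_j \Sigma_{\tilde h}(j/n)V_j$.

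That said, as a proof your proposal has a genuine gap, and it is exactly the one you flag yourself: the quantitative content of the theorem \emph{is} the rate $\pi_n$ in \reff{eq:pin}, and you defer the entire balancing of the truncation lag $m$, the block lengths $p,q$, the coupling rate, and the covariance-smoothing error to ``delicate bookkeeping.'' Without carrying out that optimization and verifying that the exponents collapse to $(2\varsigma+2\gamma+\gamma\varsigma)/(2\varsigma+8\gamma+4\gamma\varsigma)$ and $2\gamma(3+\varsigma)/(\varsigma+4\gamma+2\gamma\varsigma)$, nothing has been proven --- every downstream use of this theorem in the paper (e.g.\ \reff{eq:localscb1}) depends on the precise form of $\pi_n$. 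A second, smaller gloss: bounding the effect of the $m$-dependence truncation and of the small blocks on $\max_{i\le n}|\cdot|$ requires a maximal inequality over the partial-sum process (via the martingale/projection decomposition and Burkholder, or the $\|\cdot\|_{q,\alpha}$ machinery), not just a term-by-term $\sL_{2+\varsigma}$ bound; you gesture at this for the small blocks but not for the truncation step. Given that the paper itself treats this as a black-box citation, the honest conclusion is that your sketch is the right road map for the cited proof but does not substitute for it.
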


\noindent The following lemma is an analogue of Lemma 2 in \cite{zhouwu10}. Since we use other Gaussian approximation rates from Theorem \ref{theorem:gaussapprox_localstat}, we shortly state the proof for completeness.

\begin{lemma}\label{lemma:localscb}
    Let the assumptions and notations from Theorem \ref{theorem:gaussapprox_localstat} hold. Define
    \[
        D_{\tilde h}(t) := (nb_n)^{-1}\sum_{i=1}^{n}\hat K_{b_n}(i/n-t) \tilde h_i(i/n).
    \]
    Assume that $\Sigma_{\tilde h}(t)$ is Lipschitz-continuous and that its smallest eigenvalue is bounded away from 0 uniformly on $[0,1]$. Assume that $\log(n)^4\big(b_n n^{
    (2\gamma+\varsigma \gamma-\varsigma)/(\varsigma+4\gamma+2\gamma \varsigma)}\big)^{-1} \to 0$ and $b_n \log(n)^{3/2} \to 0$. Then
    \begin{equation}
    \lim_{n\to\infty}\IP\Big( \frac{\sqrt{n b_n}}{\sigma_{\hat K,0}} \sup_{t \in \sT_n}\Big| \Sigma_{\tilde h}^{-1}(t)D_{\tilde h}(t)\Big| - B_{\hat K}(m^{*}) \le \frac{u}{\sqrt{2 \log(m^{*})}}\Big) = \exp(-2 \exp(-u)), \label{eq:lemma_scb}
\end{equation}
\end{lemma}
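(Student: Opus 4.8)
The plan is to transfer the Gumbel limit of Lemma \ref{lem:extreme value}, which holds for an exactly Gaussian kernel average, to the statistic built from $D_{\tilde h}(t)$, by combining the Gaussian approximation of Theorem \ref{theorem:gaussapprox_localstat} with a localization (whitening) argument. The governing scale is the one on which the extreme-value statistic fluctuates around $B_{\hat K}(m^{*})$, namely $(\log m^{*})^{-1/2} \asymp (\log n)^{-1/2}$; every approximation error will be shown to be $o_{\IP}((\log n)^{-1/2})$ after multiplication by $\sqrt{nb_n}$, which by a Slutsky-type argument suffices to carry the limit through.

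First I would invoke Theorem \ref{theorem:gaussapprox_localstat} to construct, on a richer probability space, independent $N(0,I_s)$ vectors $V_i$ such that the partial sums of $\tilde h_i(i/n)$ are uniformly approximated by the partial sums of $\Sigma_{\tilde h}(i/n)V_i$, i.e.
\[
    \max_{1\le m\le n}\Big|\sum_{i=1}^m \tilde h_i(i/n) - \sum_{i=1}^m \Sigma_{\tilde h}(i/n)V_i\Big| = o_{\IP}(\rho_n),
\]
where $\rho_n$ is the Gaussian approximation rate. I would then pass from partial sums to the kernel average $D_{\tilde h}(t)$ by Abel summation: since $\hat K$ is Lipschitz and compactly supported, the total variation in $i$ of the weights $\hat K_{b_n}(i/n-t)$ is $O(1)$ uniformly in $t$, so
\[
    \sup_{t\in\sT_n}\Big|(nb_n)^{-1}\sum_{i=1}^n \hat K_{b_n}(i/n-t)\big(\tilde h_i(i/n)-\Sigma_{\tilde h}(i/n)V_i\big)\Big| = o_{\IP}\big(\rho_n (nb_n)^{-1}\big).
\]
Multiplying by $\sqrt{nb_n}$ turns this into $o_{\IP}(\rho_n(nb_n)^{-1/2})$, and the stated bandwidth condition $\log(n)^4(b_n n^{(2\gamma+\varsigma\gamma-\varsigma)/(\varsigma+4\gamma+2\gamma\varsigma)})^{-1}\to 0$ is exactly what guarantees this is $o_{\IP}((\log n)^{-1/2})$.

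Next I would handle the spatially varying covariance. After the replacement, the target becomes $\Sigma_{\tilde h}^{-1}(t)(nb_n)^{-1}\sum_i \hat K_{b_n}(i/n-t)\Sigma_{\tilde h}(i/n)V_i$. Because the kernel restricts the sum to $|i/n-t|\le b_n$ and $\Sigma_{\tilde h}$ is Lipschitz with smallest eigenvalue bounded away from $0$, one has $\Sigma_{\tilde h}^{-1}(t)\Sigma_{\tilde h}(i/n)=I_s+O(b_n)$ uniformly on the support, so this differs from $(nb_n)^{-1}\sum_i \hat K_{b_n}(i/n-t)V_i = (nb_n)^{-1}F_n(t)$ by a relative error of order $b_n$. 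Since the standardized Gaussian average has $\sqrt{nb_n}$-scaled supremum of order $\sqrt{\log n}$, this localization error contributes $O_{\IP}(b_n\sqrt{\log n})$ to the centered statistic, which is $o_{\IP}((\log n)^{-1/2})$ because $b_n\log(n)\to 0$ is implied by the assumption $b_n\log(n)^{3/2}\to 0$.

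Finally, collecting the two negligible errors and applying Lemma \ref{lem:extreme value} to $F_n(t)$ (whose hypotheses $b_n\to 0$ and $nb_n/\log^2(n)\to\infty$ follow from the stated conditions) yields \reff{eq:lemma_scb}. The main obstacle is the first step: one must ensure that the Gaussian approximation rate $\rho_n$ of Theorem \ref{theorem:gaussapprox_localstat}, after the Abel-summation transfer and the $\sqrt{nb_n}$ scaling, falls below the delicate $(\log n)^{-1/2}$ fluctuation threshold — this is precisely the role of the technical bandwidth exponent — while simultaneously keeping the whitening error small, which forces the competing smallness requirement $b_n\log(n)^{3/2}\to 0$.
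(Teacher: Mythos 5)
Your proposal is correct and follows essentially the same route as the paper's proof: Gaussian approximation via Theorem \ref{theorem:gaussapprox_localstat} transferred to the kernel average by summation by parts, replacement of $\Sigma_{\tilde h}(i/n)$ by $\Sigma_{\tilde h}(t)$ using Lipschitz continuity (which the paper controls uniformly in $t$ via a chaining/maximal inequality for the resulting Gaussian process of variance $O(b_n/n)$, giving $O_{\IP}(b_n\log(n)(nb_n)^{-1/2})$), and finally Lemma \ref{lem:extreme value}. The rate bookkeeping against the $(\log m^{*})^{-1/2}$ fluctuation scale and the roles assigned to the two bandwidth conditions match the paper exactly.
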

\begin{proof}[Proof of Lemma \ref{lemma:localscb}]
    By Theorem \ref{theorem:gaussapprox_localstat} and summation-by-parts, there exist i.i.d. $V_i \sim N(0, I_{s\times s})$ such that
    \begin{equation}
        \sup_{t\in (0,1)}|D_{\tilde h}(t) - \Xi(t)| = O_{\IP}\Big(\frac{n^{\frac{2 \varsigma+2\gamma+\gamma\varsigma}{2 \varsigma+8\gamma+4\gamma\varsigma}}\log(n)^{\frac{2\gamma(3+\varsigma)}{\varsigma+4\gamma+2 \gamma \varsigma}}}{n b_n}\Big) = O_{\IP}\Big(\frac{\log(n)^2 \big(b_n n^{\frac{2\gamma+\varsigma \gamma-\varsigma}{\varsigma+4\gamma+2\gamma \varsigma}}\big)^{-1/2}}{ (nb_n)^{1/2}\log(n)^{1/2}}\Big),\label{eq:localscb1}
    \end{equation}
    where $\Xi(t) = (nb_n)^{-1}\sum_{i=1}^{n}\hat K_{b_n}(i/n-t)\Sigma_{\tilde h}(i/n) V_i$. Here, \reff{eq:localscb1} is  $o_{\IP}((nb_n)^{-1/2}\log(n)^{-1/2})$ due to
    \[
        \log(n)^4\big(b_n n^{
    (2\gamma+\varsigma \gamma-\varsigma)/(\varsigma+4\gamma+2\gamma \varsigma)}\big)^{-1} \to 0.
    \]
    Since $\Sigma_{\tilde h}(\cdot)$ is Lipschitz continuous by Assumption (b), we can use a standard chaining argument in $t$ (as it was done in Lemma \ref{lemma:biasapproxbahadur} for $\Pi_n(t)$) and the fact that $(nb_n)^{-1}\sum_{i=1}^{n}(\Sigma_{\tilde h}(i/n) - \Sigma_{\tilde h}(t)) \hat K_{b_n}(i/n-t) V_i \sim N(0, v_n)$, with $|v_n|_{\infty} \le C \frac{b_n}{n}$ for some constant $C > 0$ to obtain
    \begin{eqnarray}
        &&\sup_{t\in (0,1)}|\Xi(t) - (nb_n)^{-1}\Sigma_{\tilde h}(t)\sum_{i=1}^{n}\hat K_{b_n}(i/n-t) V_i|\nonumber\\
        &=& \sup_{t\in (0,1)}\big|(nb_n)^{-1}\sum_{i=1}^{n}\hat K_{b_n}(i/n-t)(\Sigma_{\tilde h}(i/n) - \Sigma_{\tilde h}(t)) V_i\big|\nonumber\\
        &=& O_{\IP}\Big(\frac{b_n \log(n)}{(nb_n)^{1/2}}\Big) = O_{\IP}\Big(\frac{b_n \log(n)^{3/2}}{(nb_n)^{1/2}\log(n)^{1/2}}\Big),\label{eq:localscb2}
    \end{eqnarray}
    which is $o_{\IP}((nb_n)^{-1/2}\log(n)^{-1/2})$ due to $b_n \log(n)^{3/2} \to 0$. So the result follows from Lemma \ref{lem:extreme value} in view of \reff{eq:localscb1} and \reff{eq:localscb2}.
\end{proof}

\begin{proof}[Proof of Theorem \ref{th:scb_paper}]
    By Proposition \ref{example:tvrec}, Assumption \ref{ass:case1} implies Assumption \ref{ass1} with arbitrarily large $\gamma > 0$. By Proposition \ref{example:garch},  Assumption \ref{ass:case2} implies Assumption \ref{ass3} with arbitrarily large $\gamma > 0$. We now prove the statement under the more general Assumptions \ref{ass1} or \ref{ass3}, respectively.
    
    Choose $\gamma > 0$ large enough such that  $\frac{2\gamma+\varsigma \gamma-\varsigma}{\varsigma+4\gamma+2\gamma \varsigma} >  \alpha_{exp}$. We therefore have
    \begin{equation}
        \log(n)^4 \big( b_n n^{\frac{2\gamma+\varsigma \gamma-\varsigma}{\varsigma+4\gamma+2\gamma \varsigma}}\big)^{-1} \to 0\label{th:scb_paper_proofeq1}
    \end{equation}
    by assumption.
    
    Let $\tilde k_i(t) := \nabla_{\theta}\ell(\tilde Z_i(t),\theta(t))$ and $\hat K(x) = K(x)$ or $\hat K(x) = K(x) x$, respectively. Define 
    \[
        \Omega_C(t) := (nb_n)^{-1}\sum_{i=1}^{n}\hat K_{b_n}(i/n-t)A_C(i/n)\tran \tilde k_i(i/n)
    \]
    and $D_{\tilde k}(t) = (nb_n)^{-1}\sum_{i=1}^{n}\hat K_{b_n}(i/n-t) \tilde k_i(i/n)$.
    Similar to the discussion of $\Pi_n(t)$ in the proof of Lemma \ref{lemma:biasapproxbahadur} (note that the rates  in \reff{eq:lemmabiasapproxbahadur1} and \reff{eq:lemmabiasapproxbahadur2} then change to $O(b_n)$ instead of $O(b_n^2)$), we can show that
    \begin{equation}
        \sup_{t\in (0,1)}|\Omega_C(t) - A_C(t)\tran\cdot D_{\tilde k}(t)| = O_{\IP}(\beta_n b_n) = O_{\IP}\Big(\frac{b_n^{1/2}\log(n)}{(nb_n)^{1/2}\log(n)^{1/2}}\Big),\label{eq:proofbahadur1}
    \end{equation}
    which is $o_{\IP}((nb_n)^{-1/2}\log(n)^{-1/2})$ since $b_n \log(n)^2 \to 0$.
    
    $\tilde h_i(t) := A_C(t)\tran \tilde k_i(t)$ is a locally stationary process with long-run variance $\Sigma^2_{\tilde h}(t) = \Sigma^2_C(t)$. By Lemma \ref{lemma:localscb} (which is applicable due to \reff{th:scb_paper_proofeq1}), we have that
    \begin{equation}
        \lim_{n\to\infty}\IP\Big( \frac{\sqrt{n b_n}}{\sigma_{\hat K,0}} \sup_{t \in \sT_n}\big| \Sigma_C^{-1}(t)\Omega_C(t)\big| - B_{\hat K}(m^{*}) \le \frac{u}{\sqrt{2 \log(m^{*})}}\Big) = \exp(-2 \exp(-u)).\label{eq:proofbahadur2}
    \end{equation}
    By Theorem \ref{theorem:bahadur_paper}, we have
    \begin{eqnarray}
        && \sup_{t\in \sT_n}\big| V(t) \{\hat \theta_{b_n}(t) - \theta(t)\} - b_n^2 \frac{\mu_{K,2}}{2}V(t) \theta''(t) - D_{\tilde k}(t)\big|\nonumber\\
        &=&  O_{\IP}\big(b_n^3 + (nb_n)^{-1}b_n^{-1/2}\log(n)^{3/2} + (nb_n)^{-1/2}b_n \log(n)\big)\nonumber\\
        &=& O_{\IP}\Big(\frac{(nb_n^7 \log(n))^{1/2} + (nb_n^2 \log(n)^{-4})^{-1/2} + b_n \log(n)^{3/2}}{(nb_n)^{1/2}\log(n)^{1/2}}\Big)\label{eq:applicatbahadur1},
    \end{eqnarray}
    which is $o_{\IP}((nb_n)^{-1/2}\log(n)^{-1/2})$ since $n b_n^7 \log(n) \to 0$, $n b_n^{2} \log(n)^{-4} \to \infty$ and $b_n \log(n)^2 \to 0$. Together with \reff{eq:proofbahadur1} and \reff{eq:proofbahadur2} (with $\hat K = K$), this implies \reff{eq:scb_paper}.
    
\end{proof}

\bibliographystyle{imsart-nameyear}
\bibliography{tvarch}


\newpage
\textbf{Supplement:} This material  contains the remaining proofs of the results in the paper.

\setcounter{section}{2}
\section{Remaining proofs and intermediate lemmata for the proofs of the main theorems}

In this section, we give the proofs for the Propositions \ref{prop:v_i_estimation} and \ref{prop:prop2} in Section \ref{sec:implementation}. Moreover, we provide the remaining high-level lemmas for the proofs of the main results Theorem \ref{theorem:bahadur_paper} and Theorem \ref{th:scb_paper} of the paper in Section \ref{sec:intermediatelemma} below.

\begin{proof}[Proof of Proposition \ref{prop:v_i_estimation}]
 (i) Lemma \ref{lemma:empprocess2}(i),(iii), Lemma \ref{lemma:bias} and the notation therein applied to $g = \nabla_{\theta}^2\ell$ imply
\begin{eqnarray}
    &&\sup_{t\in \sT_n}|\hat\mu_{K,0,b_n}(t)\hat V_{b_n}(t) - \hat\mu_{K,0,b_n}(t)V(t)|\nonumber\\
    &\le& \sup_{t \in \sT_n, \eta \in E_n}|G_n^c(t,\eta) - \hat G_n(t,\eta)| + \sup_{t \in \sT_n, \eta \in E_n}|\hat G_n(t,\eta)|\nonumber\\
    &&\quad\quad + \sup_{t \in \sT_n, \eta \in E_n}|\IE \hat B_n(t,\eta) - V^{\circ}(t,\eta)| +\sup_{t \in \sT_n}|V^{\circ}(t,\hat\eta_{b_n}) - \hat\mu_{K,0,b_n}(t)V(t)|\nonumber\\
    &=& O_{\IP}( (nb_n)^{-1}) + o_{\IP}(\beta_n) + O(b_n) + \sup_{t \in \sT_n}|V^{\circ}(t,\hat\eta_{b_n}) - \hat\mu_{K,0,b_n}(t)V(t)|.\label{proof:v_i_est_eq1}
\end{eqnarray}
We obtain similar as in the proof of Theorem \ref{theorem:bahadur_paper}
 (\reff{eq:mlerate} therein) that
\[
    \sup_{t \in \sT_n}|\hat \eta_{b_n}(t) - \eta_{b_n}(t)| = O_{\IP}((nb_n)^{-1/2}\log(n) + (nb_n)^{-1} + \beta_n b_n^2 + b_n^{2}).
\]
Since $\eta \mapsto V^{\circ}(t,\eta)$ is Lipschitz continuous by Lemma \ref{lemma:hoelder}, the result follows from \reff{proof:v_i_est_eq1} and $b_n \log(n)\to 0$.

(ii) follows similarly due to $\nabla_{\theta}\ell \cdot \nabla_{\theta}\ell\tran \in \sH(2M, \chi, \bar{\bar C})$ with some $\bar{\bar C} > 0$.

\end{proof}

\begin{proof}[Proof of Proposition \ref{prop:prop2}]
    We proceed similar as in the proof of Theorem \ref{theorem:bahadur_paper}
    . Now we use the explicit result of Lemma \ref{lemma:bias}(a) applied to $g = \ell$ (both for Assumption \ref{ass1} and \ref{ass3}), we obtain
    \[
        \sup_{t\in (0,1)}\sup_{\eta \in E_n}|L_{n,b_n}^{\circ}(t,\eta) - \tilde L_{b_n}^{\circ}(t,\eta)| = O_{\IP}(\beta_n + (nb_n)^{-1}) + O(b_n),
    \]
    where $\tilde L_{b_n}^{\circ}(t,\eta) = \int_{-t/b_n}^{(1-t)/b_n}K(x) L(t,\eta_1 + \eta_2 x) d x$. By optimality of $\hat \eta_{b_n}(t)$,
    \begin{eqnarray*}
        0 &\le& L_{n,b_n}^{\circ}(t,\theta(t)) - L_{n,b_n}^{\circ}(t,\hat\eta_{b_n}(t))\\
        &\le& \tilde L_{b_n}^{\circ}(t,\theta(t)) - \tilde L_{b_n}^{\circ}(t,\hat\eta_{b_n}(t)) + 2\sup_{\eta \in E_n} |L_{n,b_n}^{\circ}(t,\eta) - \tilde L_{b_n}^{\circ}(t,\eta)|.
    \end{eqnarray*}
    This implies
    \begin{eqnarray}
       && \min\Big\{\int_{-1}^{0}K(x) \big\{L(t,\hat \theta_{b_n}(t) + b_n\widehat \theta_{b_n}'(t) x)-L(t, \theta(t)) \big\}  dx,\nonumber\\
        &&\quad\quad \int_{0}^{1}K(x) \big\{L(t,\hat \theta_{b_n}(t) + b_n\widehat \theta_{b_n}'(t) x)-L(t, \theta(t)) \big\}  dx\Big\} \le 2\sup_{\eta \in E_n} |L_{n,b_n}^{\circ}(t,\eta) - \tilde L_{b_n}^{\circ}(t,\eta)|.\label{eq:proofboundary}
    \end{eqnarray}
    Assume that for some $\iota > 0$, $\limsup_{n\to\infty}\sup_{t\in (0,1)}|\hat \eta_{b_n}(t) - (\theta(t)\tran,0)\tran| \ge \iota$. Then there exists $t \in (0,1)$ such that either (c1)
    \[
        |\hat \theta_{b_n}(t) - \theta(t)| \ge \frac{1}{2}|b_n \widehat \theta_{b_n}'(t)|
    \]
    and thus $|\hat \theta_{b_n}(t) - \theta(t)| > \iota/3$, or (c2)
    \[
        |\hat \theta_{b_n}(t) - \theta(t)| < \frac{1}{2}|b_n \widehat \theta_{b_n}'(t)|,
    \]
    and thus $|b_n \widehat \theta_{b_n}'(t)| > 2\iota/3$.\\
    In case (c1), we have $|\hat \theta_{b_n}(t) + b_n\widehat \theta_{b_n}'(t) x - \theta(t)| \ge |\hat \theta_{b_n}(t) - \theta(t)| - |x| |b_n\widehat \theta_{b_n}'(t)| \ge \frac{\iota}{6}$ for $x \in [0,\frac{1}{4}]$, thus with some $c_0 > 0$,
    \[
        \int_{0}^{1}K(x) \big\{L(t,\hat \theta_{b_n}(t) + b_n\widehat \theta_{b_n}'(t) x)-L(t, \theta(t)) \big\}  dx \ge \int_{0}^{1/4}K(x) \big\{L(t,\hat \theta_{b_n}(t) + b_n\widehat \theta_{b_n}'(t) x)-L(t, \theta(t)) \big\}  dx \ge c_0
    \]
    since $\theta \mapsto L(t,\theta)$ is continuous and attains its unique minimum at $\theta = \theta(t)$.\\
    In case (c2), we have $|\hat \theta_{b_n}(t) + b_n\widehat \theta_{b_n}'(t) x - \theta(t)| \ge |x| |b_n\widehat \theta_{b_n}'(t)| - |\hat \theta_{b_n}(t) - \theta(t)| \ge \frac{\iota}{6}$ for $x \in [\frac{3}{4},1]$, thus with some $c_0 > 0$,
    \[
        \int_{0}^{1}K(x) \big\{L(t,\hat \theta_{b_n}(t) + b_n\widehat \theta_{b_n}'(t) x)-L(t, \theta(t)) \big\}  dx \ge \int_{3/4}^{1}K(x) \big\{L(t,\hat \theta_{b_n}(t) + b_n\widehat \theta_{b_n}'(t) x)-L(t, \theta(t)) \big\}  dx \ge c_0.
    \]
    In both cases, \reff{eq:proofboundary} becomes a contradiction. Therefore,
    \[
        \sup_{t\in (0,1)}|\hat \eta_{b_n}(t) - \eta_{b_n}(t)| = o_{\IP}(1).
    \]

    Using summation-by-parts and Gaussian approximation similar to that presented in Theorem \ref{theorem:gaussapprox_localstat} for the process $\nabla_{\theta}\ell(\tilde Z_i(i/n), \theta(i/n))$, there exists i.i.d. $V_1,V_2,\ldots \sim N(0, I_{s\times s})$ on a richer probability space such that, for $\pi_n$ as in \reff{eq:pin} 
    
    \begin{equation}
    \sup_{t\in (0,1)}\big|(nb_n)^{-1}\sum_{i=1}^{n}K_{b_n}(i/n-t) (\nabla_{\theta}\ell(\tilde Z_i(i/n), \theta(i/n))-V_i)\big\}| = O_{\IP}((nb_n)^{-1}\pi_n)=O_{\IP}( (nb_n)^{-1/2}\log(n)). 
    \end{equation}
    Thus one can replace $\sup_{t \in \sT_n}$ by $\sup_{t \in (0,1)}$ in \reff{eq:glmkonv100}. A careful examination of the rest of the proof of Theorem \ref{theorem:bahadur_paper} (with Lemma \ref{lemma:bias2}\reff{bias2:eq1} replaced by Lemma \ref{lemma:bias2}\reff{bias2:eq2}) now yields the result
    \begin{equation}
       \sup_{t\in (0,1)} |\tilde V_{b_n}^{\circ}(t)\cdot (\hat \eta_{b_n}(t) - \eta_{b_n}(t)) - \nabla_{\eta} L_{n,b_n}^{\circ,c}(t,\eta_{b_n}(t))\big| = O_{\IP}(\tau_n^{(1)}),\label{eq:boundary2}
    \end{equation}
    where (we shortly write $\hat \mu_{K,j}(t) = \hat \mu_{K,j,b_n}(t)$)
    \[
        \tilde V_{b_n}^{\circ}(t) = \begin{pmatrix}
            \hat \mu_{K,0}(t) & \hat \mu_{K,1}(t)\\
             \hat \mu_{K,1}(t) & \hat \mu_{K,2}(t)
        \end{pmatrix} \otimes V(t).
    \]

    By Lemma \ref{lemma:empprocess2}(i), Lemma \ref{lemma:bias2} and Lemma \ref{lemma:biasapproxbahadur}, we obtain furthermore with $U_{i,n}(t) = (K_{b_n}(i/n-t), K_{b_n}(i/n-t)\cdot (i/n-t) b_n^{-1})\tran$:
    \begin{eqnarray}
        && \sup_{t\in (0,1)}\big| \nabla_{\eta} L_{n,b_n}^{\circ,c}(t,\eta_{b_n}(t)) - b_n^2 \begin{pmatrix}
            \hat \mu_{K,2}(t)\\
            \hat \mu_{K,3}(t)
        \end{pmatrix} \otimes [V(t) \theta''(t)]\nonumber\\
        &&\quad\quad\quad\quad - (nb_n)^{-1}\sum_{i=1}^{n}U_{i,n}(t) \otimes \nabla_{\theta}\ell(\tilde Z_i(i/n),\theta(i/n)) \big| =  O_{\IP}(\beta_n b_n^2 + b_n^3 + (nb_n)^{-1}).\label{eq:boundary3}
    \end{eqnarray}
    Recalling the proof of Lemma \ref{lemma:localscb}, \reff{eq:localscb1} and \reff{eq:localscb2} and the proof of Theorem \ref{th:scb_paper}, \reff{eq:proofbahadur1} we see that there exist i.i.d. $V_i \sim N(0, I_{s\times s})$ such that both for $\hat K = K$ and $\hat K(x) = K(x) \cdot x$,
    \begin{eqnarray}
       &&  \sup_{t\in (0,1)}\big|A_C(t)\tran (nb_n)^{-1}\sum_{i=1}^{n}\hat K_{b_n}(i/n-t)\nabla_{\theta}\ell(\tilde Z_i(i/n),\theta(i/n))\nonumber\\
       &&\quad\quad\quad\quad -  \Sigma_C(t) (nb_n)^{-1}\sum_{i=1}^{n}\hat K_{b_n}(i/n-t) V_i\big|\nonumber\\
       &=& O_{\IP}\Big(\frac{\log(n)^2 \big(b_n n^{\frac{2\gamma+\varsigma \gamma-\varsigma}{\varsigma+4\gamma+2\gamma \varsigma}}\big)^{-1/2}}{ (nb_n)^{1/2}\log(n)^{1/2}} + \frac{b_n \log(n)^{3/2}}{(nb_n)^{1/2}\log(n)^{1/2}}+\frac{b_n^{1/2}\log(n)}{(nb_n)^{1/2}\log(n)^{1/2}}\Big)\nonumber\\
       &&\quad\quad\quad\quad =: O_{\IP}(w_n).\label{eq:boundary4}
    \end{eqnarray}
    With \reff{eq:boundary2} and
    \begin{eqnarray*}
        \tilde V_{b_n}^{\circ}(t)^{-1} &=& \begin{pmatrix}
            \hat \mu_{K,0}(t) & \hat \mu_{K,1}(t)\\
             \hat \mu_{K,1}(t) & \hat \mu_{K,2}(t)
        \end{pmatrix}^{-1} \otimes V(t)^{-1}\\
        &=& \frac{1}{\hat \mu_{K,2}(t) N_{b_n}^{(0)}(t)}\begin{pmatrix}
            \hat \mu_{K,2}(t) V(t)^{-1} & -\hat \mu_{K,1}(t) V(t)^{-1}\\
            -\hat \mu_{K,1}(t)V(t)^{-1} & \hat \mu_{K,0}(t) V(t)^{-1}
        \end{pmatrix},
    \end{eqnarray*}
    we obtain:
    \begin{eqnarray*}
        && \sup_{t\in (0,1)}\Big| N_{b_n}^{(0)}(t)\cdot \{\hat \theta_{b_n,C}(t) - \theta_C(t)\}\\
        &&\quad\quad\quad\quad - \Big[ A_C(t)\tran \nabla_{\eta_1} L_{n,b_n}^{\circ,c}(t,\eta_{b_n}(t)) -\frac{\hat \mu_{K,1}(t)}{\hat \mu_{K,2}(t)} A_C(t)\tran \nabla_{\eta_2} L_{n,b_n}^{\circ,c}(t,\eta_{b_n}(t))\Big]\Big| = O_{\IP}(\tau_n^{(1)}).
    \end{eqnarray*}
    With \reff{eq:boundary3} and \reff{eq:boundary4}, we have
    \begin{eqnarray*}
        && \sup_{t\in (0,1)}\Big|N^{(0)}_{b_n}(t) \cdot \{\hat \theta_{b_n,C}(t) - \theta_C(t)\}\\
        &&\quad\quad\quad\quad\quad + b_n^2 N^{(1)}_{b_n}(t) \theta_{C}''(t) - \Sigma_C(t)\big\{ Q_{b_n}^{(0)}(t) - \frac{\hat \mu_{K,1}(t)}{\hat \mu_{K,2}(t)} Q_{b_n}^{(1)}(t)\big\}\Big|\\
        &=& O_{\IP}(\tau_n^{(1)} + (\beta_n b_n^2 + b_n^3 + (nb_n)^{-1}) + w_n),
    \end{eqnarray*}
    which finishes the proof.
\end{proof}

\subsection{Intermediate Lemmas for $\hat L_{n,b_n}^{\circ}$}\label{sec:intermediatelemma}
In this section, we show some lemmas for $\hat L_{n,b_n}^{\circ}$ which are needed to prove the main results. To do so, we make use of the elementary lemmas derived in Section \ref{sec:elementaryresults}. Lemma \ref{lemma:bias2} derives a bias expansion of $\nabla_{\eta}\hat L_{n,b_n}^{\circ}(t,\eta_{b_n}(t))$, Lemma \ref{lemma:biasapproxbahadur} shows an approximation of $\nabla_{\eta}\hat L_{n,b_n}^{\circ}(t,\eta_{b_n}(t))$ by a localized sum of $\nabla_{\theta}\ell(\tilde Z_i(i/n),\theta(i/n))$, that is, the lemma justifies the replacement of the locally stationary process $Z_i$ in $\nabla_{\eta}\hat L_{n,b_n}^{\circ}(t,\eta_{b_n}(t))$ by $\tilde Z_i(i/n)$ with a certain convergence rate. Lemma \ref{lemma:property_lipschitz_pi} discusses Lipschitz properties of a quantity $\Pi_n$ which occurs in the proof of Lemma \ref{lemma:biasapproxbahadur}.

\begin{lemma}\label{lemma:bias2}
    Let $\eta_{b_n}(t) = (\theta(t)\tran, b_n \theta'(t)\tran)\tran$. Let Assumption \ref{ass1} hold with $r = 1$ or let Assumption \ref{ass3} hold with $r = 2+\varsigma$, $\varsigma > 0$.
        Then uniformly in $t \in \sT_n$,
    \begin{equation}
        \IE \nabla_{\eta_1}\hat L_{n,b_n}^{\circ}(t,\eta_{b_n}(t)) = b_n^2 \frac{\mu_{K,2}}{2}V(t) \theta''(t) + O(b_n^3 + (nb_n)^{-1}).\label{bias2:eq1}
    \end{equation}
    Furthermore, it holds uniformly in $t\in (0,1)$ that
    \begin{equation}
        \IE \nabla_{\eta}\hat L_{n,b_n}^{\circ}(t,\eta_{b_n}(t)) = \frac{b_n^2}{2}\int_{-t/b_n}^{(1-t)/b_n} K(x)\begin{pmatrix}
             x^2\\
             x^3
        \end{pmatrix} dx \otimes [V(t) \theta''(t)] + O(b_n^3 + (nb_n)^{-1}).\label{bias2:eq2}
    \end{equation}
\end{lemma}
\begin{proof}[Proof of Lemma \ref{lemma:bias2}]
Let $U_{i,n}(t) = (K_{b_n}(i/n-t), K_{b_n}(i/n-t) (i/n-t) b_n^{-1})\tran$. By a Taylor expansion of $\theta(i/n)$ around $t$, we have
 \[
    \theta(i/n) = \theta(t) + \theta'(t)(i/n-t) + r_n(t),
 \]
 where $r_n(t) = \theta''(t)\frac{(i/n-t)^2}{2} + \theta'''(\tilde t) \frac{(i/n-t)^3}{6}$ and $\tilde t$ is between $t$ and $i/n$. We conclude that
 \begin{eqnarray}
    &&\nabla_{\eta}\hat L_{n,b_n}^{\circ}(t,\eta_{b_n}(t)) - (nb_n)^{-1}\sum_{i=1}^{n}U_{i,n}(t) \otimes \nabla_{\theta}\ell(\tilde Z_i(i/n), \theta(i/n))\nonumber\\
    &=& (nb_n)^{-1}\sum_{i=1}^{n}U_{i,n}(t) \otimes \big\{\int_0^{1} \nabla_{\theta}^2\ell(\tilde Z_i(i/n), \theta(i/n) + s r_n(t)) d s \cdot r_n(t)\big\}\label{proofbias:eq1}.
 \end{eqnarray}
Using $\nabla_{\theta}^2\ell \in \sH(M,\chi,\bar C)$ (if Assumption \ref{ass1} holds) or $\nabla_{\theta}^2\ell \in \sH(M(1+s),\chi,\bar C)$ with $s > 0$ small enough (if Assumption \ref{ass3} holds), we obtain with Lemma \ref{lemma:hoelder} for $|i/n-t| \le b_n$:
\begin{equation}
        \|\nabla_{\theta}^2\ell(\tilde Z_i(i/n), \theta(i/n) + s r_n(t)) - \nabla_{\theta}^2 \ell(\tilde Z_i(t),\theta(t))\|_1 = O(b_n + n^{-1}).\label{proofbias:eq3}
\end{equation}
Using \reff{proofbias:eq1}, $\IE \nabla_{\theta}\ell(\tilde Z_i(i/n),\theta(i/n)) = 0$ (which follows from Assumption \ref{ass1}\ref{ass1_smooth},\ref{ass1_model} or Assumption \ref{ass3}\ref{ass3_smooth}, \ref{ass3_model}) and \reff{proofbias:eq3}, we obtain
\begin{eqnarray}
    && \IE \nabla_{\eta} \hat L_{n,b_n}^{\circ}(t,\eta_{b_n}(t))\nonumber\\
    &=& (nb_n)^{-1}\sum_{i=1}^{n}U_{i,n}(t) \otimes \big\{\IE \nabla_{\theta}^2\ell(\tilde Z_i(t), \theta(t)) \cdot \theta''(t) \frac{(i/n-t)^2}{2}\big\} + O(b_n^3 + n^{-1})\nonumber\\
    &=& \frac{b_n^2}{2}\int_{-t/b_n}^{(1-t)/b_n}K(x)\begin{pmatrix}
       x^2\\
        x^3
    \end{pmatrix} dx \otimes [V(t) \theta''(t)] + O(b_n^3 + n^{-1} + (nb_n)^{-1}),\label{mle_consistency_step4}
\end{eqnarray}
which shows \reff{bias2:eq2}. Equation \reff{bias2:eq1} follows since $t\in \sT_n$ and  symmetry of $K$ imply
\[
    \int_{-t/b_n}^{(1-t)/b_n}K(x)\begin{pmatrix}
       x^2\\
        x^3
    \end{pmatrix} dx = \int_0^{1}K(x)\begin{pmatrix}
        x^2\\
        x^3
    \end{pmatrix} dx = \begin{pmatrix}
        \mu_{K,2}\\
        0
    \end{pmatrix}.
\]
\end{proof}

\begin{lemma}[Lipschitz properties of $\Pi_n$]\label{lemma:property_lipschitz_pi} Let $s \ge 0$. Suppose that Assumption \ref{ass1} holds with $r \ge 1$ or Assumption \ref{ass3} holds with $r > 1$.

Define
\[
    \Pi_n(t) := (nb_n)^{-1}\sum_{i=1}^{n}(M_i^{(2)}(t,i/n) - \IE M_i^{(2)}(t,i/n)),
\]
where
    \[
        M_i^{(2)}(t,u) = \hat K_{b_n}(u-t)\cdot \int_0^{1}M_i(t,u) d s \cdot d_u(t),
    \]
    $M_i(u,t) = \nabla_{\theta}^2 \ell(\tilde Z_i(u),\theta(t) + s d_u(t))$ and $d_u(t) = \theta(u) - \theta(t) - (u-t) \theta'(t)$. Then there exist come constants $\tilde C, \iota' > 0$ such that
\[
\Big\|\sup_{t\not=t', |t-t'| < \iota'}\frac{|\Pi_n(t) - \Pi_n(t')|}{|t - t'|_1}\Big\|_1 \le \tilde C.
\]
\end{lemma}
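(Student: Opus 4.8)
The plan is to reduce everything to a single pointwise (in $t,t'$) estimate of the form
\[
\frac{|\Pi_n(t)-\Pi_n(t')|}{|t-t'|} \le \frac{C}{n}\sum_{i=1}^{n}\big(B_i + R_i + \IE B_i + \IE R_i\big),
\]
where $B_i,R_i \ge 0$ are random variables \emph{free of} $t,t'$ with $\sup_{u\in[0,1]}\|B_0(u)\|_1,\ \sup_{u\in[0,1]}\|R_0(u)\|_1 < \infty$. Once such a bound is in hand, the supremum over $t\ne t'$ acts only on the deterministic factor $|t-t'|$, so taking $\sup_{t\ne t'}$ and then the $\sL_1$-norm yields the claim with $\tilde C = 2C\,(\sup_u\|B_0(u)\|_1 + \sup_u\|R_0(u)\|_1)$. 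Writing $u=i/n$, $\phi_u(t) := \hat K_{b_n}(u-t)\,d_u(t)$ and $A_i(t) := \int_0^1 \nabla_\theta^2\ell(\tilde Z_i(u),\theta(t)+s\,d_u(t))\,ds$, we have $M_i^{(2)}(t,u)=\phi_u(t)A_i(t)$ and split
\[
M_i^{(2)}(t,u) - M_i^{(2)}(t',u) = [\phi_u(t)-\phi_u(t')]\,A_i(t) + \phi_u(t')\,[A_i(t)-A_i(t')],
\]
where $\phi_u$ is deterministic and $A_i$ carries all the randomness.

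The heart of the argument, and the step I expect to be the main obstacle, is the Lipschitz estimate for $\phi_u$: one must show $|\phi_u(t)-\phi_u(t')|\le C\,b_n\,|t-t'|$, with Lipschitz constant $O(b_n)$ rather than the naive $O(b_n^{-1})$. Since $d_u(t)=\theta(u)-\theta(t)-(u-t)\theta'(t)$ is the second-order Taylor remainder, $\partial_t d_u(t) = -(u-t)\theta''(t)$ and $|d_u(t)|\le \tfrac12\|\theta''\|_\infty(u-t)^2$. Using that the kernel $\hat K$ is continuously differentiable on $[-1,1]$ (which holds for every kernel with which this lemma is applied, namely $\hat K(x)=K(x)x^{j}$ with $K\in\sK$), we may differentiate,
\[
\phi_u'(\xi) = -\,b_n^{-1}\hat K'\!\big((u-\xi)/b_n\big)\,d_u(\xi) \;-\; \hat K_{b_n}(u-\xi)\,(u-\xi)\,\theta''(\xi).
\]
Both summands vanish for $|u-\xi|>b_n$; on the support $|u-\xi|\le b_n$ the first is $b_n^{-1}\cdot O(b_n^2)=O(b_n)$ because $d_u(\xi)=O(b_n^2)$ there, and the second is $O(b_n)$ as well. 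Hence $\sup_\xi|\phi_u'(\xi)|\le C b_n$ uniformly in $u$, giving $|\phi_u(t)-\phi_u(t')|\le C b_n|t-t'|$. This exact cancellation of the dangerous $b_n^{-1}$ weight against the $O(b_n^2)$ size of the remainder on the kernel support is precisely what produces an $O(1)$ bound here, as opposed to the $O(b_n^{-2})$ bound of Lemma~\ref{lemma:property_lipschitz}. Since $|A_i(t)|\le B_i := \sup_\theta|\nabla_\theta^2\ell(\tilde Z_i(u),\theta)|$, the first summand is at most $C b_n|t-t'| B_i$, and after the normalisation $(nb_n)^{-1}\cdot b_n = n^{-1}$ this contributes the genuine average $\frac{C}{n}\sum_i B_i\,|t-t'|$; boundedness of $\sup_u\|B_0(u)\|_1$ follows from Lemma~\ref{lemma:hoelder}\reff{eq:hoelder3} under Assumption~\ref{ass1}, or Lemma~\ref{lemma:hoeldergarch}\reff{eq:hoelder3garch} under Assumption~\ref{ass3}.

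For the second summand I would bound $|\phi_u(t')|\le |\hat K|_\infty\,\tfrac12\|\theta''\|_\infty (u-t')^2\,\Ii_{\{|u-t'|\le b_n\}}\le C b_n^2$ globally, and control $|A_i(t)-A_i(t')|$ by the $\theta$-Lipschitz property of $\nabla_\theta^2\ell$: the argument difference is $|\theta(t)-\theta(t')| + s|d_u(t)-d_u(t')| \le \big(\|\theta'\|_\infty + \|\theta''\|_\infty\big)|t-t'| = O(|t-t'|)$, so Lemma~\ref{lemma:hoelder}\reff{eq:hoelder2} (resp.\ Lemma~\ref{lemma:hoeldergarch}\reff{eq:hoelder2garch}) furnishes a random coefficient $R_i$ with $\sup_u\|R_0(u)\|_1<\infty$ and $|A_i(t)-A_i(t')|\le C|t-t'|R_i$. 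Thus the second summand is at most $C b_n^2|t-t'| R_i \le C b_n|t-t'| R_i$, contributing $\frac{C}{n}\sum_i R_i\,|t-t'|$ after normalisation. Combining the two summands, using $|\IE M_i^{(2)}(t,u)-\IE M_i^{(2)}(t',u)|\le \IE|M_i^{(2)}(t,u)-M_i^{(2)}(t',u)|$ for the centring, and then taking $\sup_{t\ne t'}$ and $\|\cdot\|_1$ gives the displayed bound and hence the assertion. The only role of the constant $\iota'>0$ is in the tvGARCH case (Assumption~\ref{ass3}): one chooses $\iota'$ small and $n$ large enough that the arguments $\theta(t)+s\,d_u(t)$ remain in the $\iota$-neighbourhood of $\theta(u)$ on which the multiplicative class $\sH^{mult}_{s,\iota}$ supplies its Lipschitz bounds, exactly as in the proofs of Lemma~\ref{lemma:depmeasuregarch} and Lemma~\ref{lemma:property_lipschitz}(ii).
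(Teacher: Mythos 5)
Your proof is correct, and it takes a genuinely sharper route than the paper's. The paper decomposes the increment of $M_i^{(2)}(\cdot,u)$ into three pieces and bounds them using only the crude facts $|d_u(t)|_1\le\tilde C$ and $|\hat K_{b_n}(u-t)-\hat K_{b_n}(u-t')|\le L_{\hat K}b_n^{-1}|t-t'|$; the resulting per-summand bound \reff{eq:property_lipschitz_pi_1} carries a $b_n^{-1}$ factor, so after the $(nb_n)^{-1}$ normalisation the argument actually delivers an $\sL_1$-bound of order $b_n^{-2}$ on the Lipschitz quotient rather than the constant $\tilde C$ asserted in the statement. (This is visible downstream: the chaining step in the proof of Lemma \ref{lemma:biasapproxbahadur} invokes exactly the $O\big(b_n^{-2}r^{-1}/(\beta_n b_n^2)\big)$ version, which suffices there because $r=n^5$.) You instead exploit the second-order Taylor-remainder structure of $d_u$ on the kernel support --- $|d_u(\xi)|=O(b_n^2)$ and $|\partial_\xi d_u(\xi)|=|(u-\xi)\theta''(\xi)|=O(b_n)$ for $|u-\xi|\le b_n$ --- so that the dangerous $b_n^{-1}$ from the kernel derivative is cancelled and $\phi_u$ becomes Lipschitz with constant $O(b_n)$; combined with the $(nb_n)^{-1}$ normalisation this yields a genuine Ces\`aro average of $t$-free envelopes and hence the $O(1)$ constant the lemma literally claims. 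The remaining ingredients (the $\theta$-Lipschitz bound on $\nabla_\theta^2\ell$ via Lemma \ref{lemma:hoelder}\reff{eq:hoelder2} or Lemma \ref{lemma:hoeldergarch}\reff{eq:hoelder2garch}, the centring via $|\IE A-\IE B|\le\IE|A-B|$, and the role of $\iota'$ in the tvGARCH case) match the paper's. One small repair: $\hat K$, extended by zero outside $[-1,1]$, need not be differentiable at the endpoints of its support, so the clean formula for $\phi_u'(\xi)$ does not hold everywhere; but $\phi_u$ is Lipschitz with the stated derivative bound holding off a finite set, so absolute continuity (or simply splitting $\phi_u(t)-\phi_u(t')$ into the kernel-increment and $d_u$-increment terms and using $|d_u|\le C(b_n+|t-t'|)^2$ on the relevant set) gives $|\phi_u(t)-\phi_u(t')|\le Cb_n|t-t'|$ for $|t-t'|\le b_n$ and the trivial bound $|\phi_u(t)|+|\phi_u(t')|\le Cb_n^2\le Cb_n|t-t'|$ otherwise. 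With that patch your argument is complete and in fact proves a stronger statement than the paper's own proof establishes.
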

\begin{proof}[Proof of Lemma \ref{lemma:property_lipschitz_pi}]
    We have
    \begin{eqnarray*}
        && |M_i^{(2)}(t,u) - M_i^{(2)}(t',u)|\\
        &\le& |\hat K_{b_n}(u-t) - \hat K_{b_n}(u-t')|\cdot |M_i(t,u)|\cdot |d_u(t)|\\
        &&\quad\quad + |\hat K_{b_n}(u-t')|\cdot |M_i(t,u) - M_i(t',u)|\cdot |d_u(t)|\\
        &&\quad\quad + |\hat K_{b_n}(u-t')|\cdot |M_i(t',u)|\cdot |d_u(t) - d_u(t')|.
    \end{eqnarray*}
    If Assumption \ref{ass1} holds, we have $g = \nabla_{\theta}^2\ell \in \sH(M,\chi,\bar C)$. Elementary calculations show that 
    \begin{eqnarray*}
        |M_i(t,u)| &\le& \sup_{\theta \in \Theta}|g(\tilde Z_i(u),\theta)|,\\
        |M_i(t,u) - M_i(t',u)| &\le& \sup_{\theta \in \Theta}\frac{|g(\tilde Z_i(u),\theta) - g(\tilde Z_i(u),\theta')|}{|\theta - \theta'|_1}\cdot \{|\theta(t) - \theta(t')|_1 + |d_u(t) - d_u(t')|_1\},
    \end{eqnarray*}
    As long as $|t-u| < 1$ and $|t-t'|$ is small enough, we obtain $|t'-u| \le 1$. So in the case that either $|t-u| < 1$ or $|t'-u| < 1$, Lipschitz continuity of $\theta(\cdot), \theta'(\cdot)$ implies that there exists some constant $\tilde C > 0$ such that $|d_u(t) - d_u(t')|_1 \le \tilde C |t-t'|, |\theta(t) - \theta(t')|_1 \le \tilde C |t-t'|$, $|d_u(t)|_1 \le \tilde C$.
    
    This implies
    \begin{eqnarray}
        |M_i^{(2)}(t,u) - M_i^{(2)}(t',u)| &\le& \tilde C b_n^{-1}L_{\hat K}\sup_{\theta \in \Theta}|g(\tilde Z_i(u),\theta)|\cdot |t-t'|\nonumber\\
        &&\quad + 2|\hat K|_{\infty}\tilde C^2  \cdot \sup_{\theta \in \Theta}\frac{|g(\tilde Z_i(u),\theta) - g(\tilde Z_i(u),\theta')|}{|\theta - \theta'|_1} |t-t'|\nonumber\\
        &&\quad + |\hat K|_{\infty}\tilde C\cdot \sup_{\theta \in \Theta}|g(\tilde Z_i(u),\theta)|\cdot |t-t'|.\label{eq:property_lipschitz_pi_1}
    \end{eqnarray}
    With Lemma \ref{lemma:hoelder}(i) we obtain the result.
    
    Suppose now that Assumption \ref{ass3} holds. As long as $|t-t'|$ is small enough and $n$ is large enough,
    $|u-t| \le b_n$ (or $|u-t'| \le b_n$) and the twice differentiability of $\theta(\cdot)$ imply that $\sup_{\nu\in[0,1]}|\theta(u) - (\theta(t) + \nu d_u(t))|_1 < \iota$, $\sup_{\nu\in[0,1]}|\theta(u) - (\theta(t') + \nu d_u(t'))|_1 < \iota$. Put $\tilde \ell_{\tilde \theta}(y,x,\theta) = g(F(x,\tilde \theta,y),x,\theta)$ and $\tilde g = \nabla_{\theta}^2\tilde \ell$. By Assumption \ref{ass3}, $\tilde g \in \sH_{\iota}^{mult}(M(1+s),\chi^{(s)},\bar C^{(s)})$ for all $s > 0$ small enough. Then
    \begin{eqnarray*}
        |M_i(t,u)| &\le& \sup_{|\theta - \theta(u)|_1<\iota}|\tilde g_{\theta(u)}(\zeta_i,\tilde X_i(u),\theta)|,\\
        |M_i(t,u) - M_i(t',u)| &\le& \bar C \cdot \sup_{\theta\not=\theta',|\theta-\theta(u)|_1 < \iota, |\theta' - \theta(u)|_1 < \iota}\frac{|\tilde g_{\theta(u)}(\zeta_i,\tilde X_i(u),\theta) - \tilde g_{\theta(u)}(\zeta_i,\tilde X_i(u),\theta')|}{|\theta - \theta'|_1}\\
        &&\quad\quad\times \{|\theta(t) - \theta(t')|_1 + |d_u(t) - d_u(t')|_1,
    \end{eqnarray*}
    giving appropriate results for \reff{eq:property_lipschitz_pi_1} and thus the assertion with Lemma \ref{lemma:hoeldergarch}.
\end{proof}

\begin{lemma}\label{lemma:biasapproxbahadur}
    Let $U_{i,n}(t) := K_{b_n}(i/n-t)\cdot (1, (i/n-t)b_n^{-1})\tran$. Let Assumption \ref{ass1} or \ref{ass3} hold with some $r = 2 + \varsigma$, $\varsigma > 0$. Then it holds that
    \begin{eqnarray*}
        &&\sup_{t\in (0,1)}\big|\nabla_{\eta}\hat L_{n,b_n}^{\circ}(t,\eta_{b_n}(t)) - \IE \nabla_{\eta}\hat L_{n,b_n}  ^{\circ}(t,\eta_{b_n}(t))\\
        &&\quad\quad\quad\quad\quad - (nb_n)^{-1}\sum_{i=1}^{n}U_{i,n}(t) \otimes \nabla_{\theta}\ell(\tilde Z_i(i/n),\theta(i/n))\big| = O_{\IP}(\beta_n b_n^2).
    \end{eqnarray*}
\end{lemma}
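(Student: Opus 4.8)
The plan is to reduce the claim to a centered, kernel-weighted empirical process whose summands carry an explicit factor of order $b_n^2$, and then to re-run the chaining estimate from the proof of Lemma~\ref{lemma:empprocess2}(iii) with a sharper dependence-measure input. First I would unwind the definitions: since $\eta_{b_n}(t) = (\theta(t)\tran, b_n\theta'(t)\tran)\tran$, the inner argument $\eta_1 + \eta_2(i/n-t)b_n^{-1}$ equals $\theta(t)+\theta'(t)(i/n-t)$, so that
\[
  \nabla_{\eta}\hat L_{n,b_n}^{\circ}(t,\eta_{b_n}(t)) = (nb_n)^{-1}\sum_{i=1}^{n} U_{i,n}(t)\otimes \nabla_{\theta}\ell\big(\tilde Z_i(i/n),\theta(t)+\theta'(t)(i/n-t)\big).
\]
Because $\IE\nabla_{\theta}\ell(\tilde Z_i(i/n),\theta(i/n)) = 0$ by Assumption~\ref{ass1}\ref{ass1_model},\ref{ass1_smooth} (or Assumption~\ref{ass3}), the subtracted sum $(nb_n)^{-1}\sum_i U_{i,n}(t)\otimes \nabla_{\theta}\ell(\tilde Z_i(i/n),\theta(i/n))$ is already centered, so the quantity to be bounded equals $\Xi_n(t)-\IE\Xi_n(t)$, where $\Xi_n(t):=(nb_n)^{-1}\sum_i U_{i,n}(t)\otimes\{\nabla_{\theta}\ell(\tilde Z_i(i/n),\theta(t)+\theta'(t)(i/n-t)) - \nabla_{\theta}\ell(\tilde Z_i(i/n),\theta(i/n))\}$.

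A first-order Taylor expansion of $\nabla_{\theta}\ell$ in $\theta$ then writes the bracket as $-\int_0^1 \nabla_{\theta}^2\ell(\tilde Z_i(i/n),\theta(i/n)-s\,d_{i/n}(t))\,ds\cdot d_{i/n}(t)$, where $d_{i/n}(t)=\theta(i/n)-\theta(t)-(i/n-t)\theta'(t)$ is the quadratic Taylor remainder, satisfying $|d_{i/n}(t)|=O(b_n^2)$ on the kernel support $|i/n-t|\le b_n$. Up to this reparametrization --- whose integration path stays within an $O(b_n)$-neighborhood of $\theta(t)$ and is therefore covered uniformly by the H\"older estimates of Lemma~\ref{lemma:hoelder} (resp. Lemma~\ref{lemma:hoeldergarch}) --- the two components of $\Xi_n(t)-\IE\Xi_n(t)$ are precisely the centered averages $\Pi_n(t)$ of Lemma~\ref{lemma:property_lipschitz_pi}, with $g=\nabla_{\theta}^2\ell$ and with $\hat K(x)=K(x)$, $\hat K(x)=K(x)x$ for the two blocks of $U_{i,n}$. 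The task thus becomes $\sup_{t\in(0,1)}|\Pi_n(t)| = O_{\IP}(\beta_n b_n^2)$.

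The mechanism giving the extra $b_n^2$ is that every summand $M_i^{(2)}(t,i/n)$ now carries the factor $d_{i/n}(t)=O(b_n^2)$. Concretely, Lemma~\ref{lemma:depmeasure}(iii) (resp. Lemma~\ref{lemma:depmeasuregarch}(iii) under Assumption~\ref{ass3}) gives functional dependence measure $O(b_n^2 k^{-(1+\gamma)})$ for these summands, a factor $b_n^2$ smaller than for the plain process treated in Lemma~\ref{lemma:empprocess2}. Hence the quantities $W_{2,\alpha}$ and $W_{2+\varsigma,\alpha}$ that feed the Nagaev-type bound (Theorem~6.2 of \cite{zhangwu2017}) are of order $b_n^2$ rather than $O(1)$. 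I would then copy the chaining scheme of the proof of Lemma~\ref{lemma:empprocess2}(iii)(a): discretize $(0,1)$ at scale $r^{-1}$ with $r=n^3$, apply the cited inequality on the finite grid with $q=2+\varsigma$ and $\alpha=1/2$, and control the off-grid fluctuation through the Lipschitz bound of Lemma~\ref{lemma:property_lipschitz_pi}. Inserting the target level $Q\beta_n b_n^2$ and $W_{2,\alpha},W_{2+\varsigma,\alpha}=O(b_n^2)$, the $b_n^2$ factors cancel between the numerators and the powers of the target in both the polynomial term $n^{-\varsigma/2}$ and the exponential term (of order $\exp(-c\log n)=n^{-c}$), which therefore vanish exactly as in Lemma~\ref{lemma:empprocess2}(iii); the discretization error is $O(r^{-1}/(\beta_n b_n^2))\to0$ since $\beta_n b_n^2 = \log(n)^{1/2}n^{-1/2}b_n$ and $r=n^3$.

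The main obstacle is not computational: it is recognizing that the entire content of the lemma is this $b_n^2$-sharpened rerun of Lemma~\ref{lemma:empprocess2}(iii), and that the single genuinely new input is the improved dependence-measure bound $O(b_n^2 k^{-(1+\gamma)})$ of Lemma~\ref{lemma:depmeasure}(iii)/\ref{lemma:depmeasuregarch}(iii), which is where the gain over the generic rate $\beta_n$ originates. A secondary care point is reconciling the two Taylor-path conventions (mine centered at $\theta(i/n)$, that of Lemma~\ref{lemma:property_lipschitz_pi} at $\theta(t)$); since both paths remain within $O(b_n)$ of $\theta(t)$, the uniform H\"older bounds apply without change and the distinction is immaterial to the dependence-measure and Lipschitz estimates.
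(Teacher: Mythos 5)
Your proposal is correct and follows essentially the same route as the paper's proof: centering the difference as $\Pi_n(t)$, Taylor-expanding $\nabla_\theta\ell$ to extract the factor $d_{i/n}(t)=O(b_n^2)$, feeding the $O(b_n^2 k^{-(1+\gamma)})$ bounds from Lemma~\ref{lemma:depmeasure}(iii)/\ref{lemma:depmeasuregarch}(iii) into the Nagaev-type inequality on a grid, and controlling the off-grid fluctuation via Lemma~\ref{lemma:property_lipschitz_pi}. The only deviations (your grid scale $r=n^3$ versus the paper's $n^5$, and your Taylor path centered at $\theta(i/n)$ rather than $\theta(t)$) are immaterial under the stated bandwidth conditions, as you correctly note.
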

\begin{proof}
Note that $\IE \nabla_{\theta}\ell(\tilde Z_i(i/n),\theta(i/n)) = 0$ by Assumption \ref{ass1}\ref{ass1_smooth},\ref{ass1_model} or Assumption \ref{ass3}\ref{ass3_smooth},\ref{ass3_model}. Put
\begin{eqnarray*}
&&\Pi_{n}(t)\\
&:=& (nb_n)^{-1}\sum_{i=1}^{n}U_{i,n}(t) \otimes \big\{ [\nabla_{\theta}\ell(\tilde Z_i(i/n), \theta(t) +  (i/n-t)\theta'(t)) - \nabla_{\theta}\ell(\tilde Z_i(i/n), \theta(i/n))]\\
&&\quad\quad\quad\quad\quad\quad\quad\quad- \IE[\nabla_{\theta}\ell(\tilde Z_i(i/n), \theta(t) + (i/n-t)\theta'(t)) - \nabla_{\theta}\ell(\tilde Z_i(i/n), \theta(i/n))]\}.
\end{eqnarray*}
We have to prove that $\sup_{t\in \sT_n}\big|\Pi_{n}(t)\big| = O_{\IP}(\delta_n b_n^2$). 
Define $M_i(t,u) := \int_0^{1}\nabla_{\theta}^2 \ell(\tilde Z_i(u), \theta(t) + s(\theta(u) - \theta(t) - (u-t)\theta'(t))) d s$ and $ M_i^{(2)}(t,u) = U_{i,n}(t) \otimes \big\{M_i(t,u) \{\theta(u)-\theta(t) - (u-t)\theta'(t)\}\big\}$. By a Taylor expansion of $\nabla_{\theta}\ell$ w.r.t. $\theta$, we have
\[
    \Pi_{n}(t) = (nb_n)^{-1}\sum_{i=1}^{n}(M_i^{(2)}(t,i/n) - \IE M_i^{(2)}(t,i/n)).
\]
We now apply a similar technique as in the proof of Lemma \ref{lemma:empprocess2}(iii), namely we use a chaining argument similar to \reff{eq:proof_strategy_emp} to prove
\[
    \IP\big(\sup_{t \in (0,1)}|\Pi_n(t)| > Q\beta_n b_n^2\big) \to 0,
\]
for some $Q > 0$ large enough. Define the discretization $\sT_{n,r} := \{l/r: l = 1,\ldots,r\}$ with $r = n^5$. By Lemma \ref{lemma:property_lipschitz_pi}, we have with Markov's inequality for $Q > 0$:
\[
    \IP\big(\sup_{|t-t'| \le r^{-1}}|\Pi_n(t) - \Pi_n(t')| > Q \beta_n b_n^2/2\big) = O\big(\frac{b_n^{-2}r^{-1}}{\beta_n b_n^2}\big),
\]
which converges to 0. Choose $\alpha = 1/2$.  By Lemma \ref{lemma:depmeasure}(iii) or Lemma \ref{lemma:depmeasuregarch}(iii) applied with $q = 2 + s$ ($s$ small enough), we obtain that  $\sup_{u}\Delta_{2+s}^{\sup_{t}|M^{(2)}(t,u)|}(k) = O(k^{-(1+\gamma)})$. Thus
    \begin{eqnarray}
        \tilde W_{2+s,\alpha} &:=& \sup_{u\in[0,1]} \sup_{t\in[0,1]}\| \sup_{t,\eta}|M_i^{(2)}(t,u)| \|_{2+\varsigma,\alpha} = \sup_{m \ge 0} (m+1)^{\alpha}\Delta_{2+s}^{\sup_{t}|M^{(2)}(t,u)|}(m)\nonumber\\
        &=& O(b_n^2)\label{eq:lemmabiasapproxbahadur1}
    \end{eqnarray}
    (the constant being independent of $n$) and
    \begin{eqnarray}
        \tilde W_{2,\alpha} &:=& \sup_{t,u}\|M_i^{(2)}(t,u)\|_{2,\alpha} = \sup_{m\ge 0}(m+1)^{\alpha} \sup_{u\in[0,1]}\sup_{t}\Delta_{2}^{M^{(2)}(t,u)}(m)\nonumber\\
        &=& O(b_n^2)\label{eq:lemmabiasapproxbahadur2}
    \end{eqnarray}
    (the constant being independent of $n$). We now apply Theorem 6.2 of \cite{zhangwu2017} (the proof therein also works for the uniform functional dependence measure) with $q = 2+s$, $\alpha = 1/2$ to $(M_i^{(2)}(t,i/n))_{t \in \sT_{n,r}}$, where $l = 1 \vee \#(\sT_{n,r}) \le 5 \log(n)$. For $Q$ large enough, we obtain with some constant $C_{\alpha,s} > 0$:
\begin{eqnarray*}
        &&\IP\big(\sup_{t' \in \sT_{n,r}} |\Pi_{n}(t')| \ge Q\beta_n b_n^2/2\big)\\
        &\le& \frac{C_{\alpha,s}n\cdot l^{1+s/2}\tilde W_{2+s,\alpha}^{2+s}}{(Q/2)^{2+s}(\beta_n b_n^2 (nb_n))^{2+s}}+C_{\alpha,s} \exp \Big( -\frac{C_{\alpha,s}(Q/2)^2 (\beta_n b_n^2 (n b_n))^2}{n \tilde W_{2,\alpha}^2}\Big)\\
        &\lesssim&  n^{-\varsigma/2}+ \exp\big(-\frac{(n b_n) b_n^{-1} \log(n)}{n} \big)\\
        &\to& 0,
    \end{eqnarray*}
which finishes the proof.
\end{proof}

\section{Elementary Results}
\label{sec:elementaryresults}

This section summarizes some basic results for H\"older-continuous functions $g(Z_i,\theta)$ of the observations $Z_i = (Y_i,X_i)$, $X_i = (Y_{j}:-\infty < j \le i-1)$ and the parameter $\theta\in \Theta$. They are then used in the proofs of the theorems. Depending on Case 1 or Case 2, we introduce different Lemmas which nearly state the same result under different conditions.

\subsection{Uniform upper bounds and chaining results for means of locally stationary processes}

For $t \in (0,1)$ and $\eta \in E_n = \Theta \times (\Theta' \cdot b_n)$ and some Lipschitz continuous function $\hat K$ (Lipschitz constant $L_{\hat K}$) and compact support $[-1,1]$ ($\hat K$ bounded by $|\hat K|_{\infty}$), define $\hat K_{b_n}(\cdot) := \hat K(\cdot/b_n)$ and 
    \begin{equation}
        G_n(t,\eta) := (nb_n)^{-1}\sum_{i=1}^{n}\hat K_{b_n}(i/n-t)\cdot \{g(Z_i,\eta_1 + \eta_2(i/n-t)b_n^{-1}) - \IE g(Z_i,\eta_1 + \eta_2 (i/n-t)b_n^{-1})\}.\label{eq:proof_notation}
    \end{equation}
    Let $G_{n}^c(t,\eta)$, $\hat G_n(t,\eta)$ denote the same quantities but with $Z_i$ replaced by $Z_i^c$ or $\tilde Z_i(i/n)$, respectively.
    
    In this subsection, we derive some basic results for $G_n(t,\eta)$, $G_{n}^c(t,\eta)$, and $\hat G_n(t,\eta)$, respectively. In the proofs of the theorems in the paper, the results are mainly applied to $g = \nabla_{\theta}^k \ell$ with $k \in \{0,1,2\}$. Lemma \ref{lemma:property_lipschitz} summarizes Lipschitz properties of $\hat G_n$ in both Cases 1 and 2, Lemma \ref{lemma:empprocess2} provides results on the stochastic behavior of $\hat G_n$ uniformly in $t,\eta$ based on a simple chaining approach and the Lipschitz results from Lemma \ref{lemma:property_lipschitz}. The last Lemma \ref{lemma:bias} discusses the bias of several terms connected to $\hat G_n$.

\begin{lemma}[Lipschitz properties of $\hat G_n$]\label{lemma:property_lipschitz} Let $s \ge 0$. 
\begin{enumerate}
    \item[(i)] Let $g \in \sH(M(1+s),\chi,\bar C)$. Let Assumption \ref{ass1}\ref{ass1_stat} hold with $r \ge 1+s$. Then there exists some constant $\tilde C > 0$ such that
    \[
        \sup_{t\in[0,1]}\Big\|\sup_{\eta\not=\eta'}\frac{|\hat G_n(t,\eta) - \hat G_n(t,\eta')|}{|\eta - \eta'|_1}\Big\|_1 \le \tilde C,
    \]
    and
    \[
        \Big\|\sup_{t\not=t'}\sup_{\eta\not=\eta'}\frac{|\hat G_n(t,\eta) - \hat G_n(t',\eta')|}{|t-t'| + |\eta - \eta'|_1}\Big\|_1 \le \tilde C b_n^{-2},
    \]
    \item[(ii)] (for tvGARCH) Let $g$ be such that $\tilde g_{\tilde \theta}(y,x,\theta) := g(F(x,\tilde \theta,y),x,\theta)$ fulfills $\tilde g \in \sH_{\iota}^{mult}(M(1+s),\chi^{(s)},\bar C^{(s)})$ with $\chi_i^{(s)} = O(i^{-(1+\gamma)})$. Let Assumption \ref{ass3}\ref{ass3_stat} hold with $r \ge 1 + s$ and let $\theta(\cdot)$ be continuous. Then there exists some constant $\tilde C^{(s)} > 0$ such that
    \[
        \sup_{t\in[0,1]}\Big\|\underset{|\eta - \eta_{b_n}(t)|_1 < \iota/2,|\eta' - \eta_{b_n}(t)|_1 < \iota/2}{\sup_{\eta\not=\eta'}}\frac{|\hat G_n(t,\eta) - \hat G_n(t,\eta')|}{|\eta - \eta'|_1}\Big\|_1 \le \tilde C^{(s)},
    \]
    and
    \[
        \Big\|\sup_{t\not=t'}\underset{|\eta - \eta_{b_n}(t)|_1 < \iota/2,|\eta' - \eta_{b_n}(t')|_1 < \iota/2}{\sup_{\eta\not=\eta'}}\frac{|\hat G_n(t,\eta) - \hat G_n(t',\eta')|}{|t-t'| + |\eta - \eta'|_1}\Big\|_1 \le \tilde C^{(s)} b_n^{-2},
    \]
\end{enumerate}
\end{lemma}
\begin{proof}[Proof of Lemma \ref{lemma:property_lipschitz}]
    (i) Since $g \in \sH(M(1+s),\chi,\bar C)$ and $|i/n-t| \le b_n$ inside the sum, it holds that
    \begin{equation}
        |\hat G_n(t,\eta) - \hat G_n(t,\eta')| \le \bar C |\eta - \eta'|_1 \cdot (nb_n)^{-1}\sum_{i=1}^{n}|\hat K_{b_n}(i/n-t)|\cdot \{2 +|\tilde Z_i(i/n)|_{\hat \chi}^{M(1+s)} + \| |\tilde Z_i(i/n)|_{\hat \chi}^{M(1+s)}\|_1\}\label{eq:property_lipschitz1}
    \end{equation}
    Furthermore, $(nb_n)^{-1}\sum_{i=1}^{n}|\hat K_{b_n}(i/n-t)| \le |\hat K|_{\infty}$. We conclude that
    \[
        \Big\|\sup_{\eta \not= \eta'} \frac{|\hat G_n(t,\eta) - \hat G_n(t,\eta')|}{|\eta - \eta'|_1}\Big\|_1 \le 2\bar C |\hat K|_{\infty}\big\{1 + \sup_i\big\| |\tilde Z_i(i/n)|_{\hat \chi}^{M(1+s)}\big\|_1\big\} \le 2 \bar C |\hat K|_{\infty}(1 + (D|\hat \chi|_1)^{M(1+s)}).
    \]
    
    This yields the first assertion. Since $g \in \sH(M(1+s),\chi,\bar C)$, we have with some constant $\tilde C > 0$:
    \begin{eqnarray*}
        && |\hat G_n(t,\eta) - \hat G_n(t',\eta')|\\
        &\le& (nb_n)^{-1}\sum_{i=1}^{n}|\hat K_{b_n}(i/n-t) - \hat K_{b_n}(i/n-t')| \cdot \sup_{\theta}\{|g(\tilde Z_i(i/n),\theta)| + \| g(\tilde Z_i(i/n),\theta)\|_1\}\\
        &&+  (nb_n)^{-1}\sum_{i=1}^{n}|\hat K_{b_n}(i/n-t')|\cdot |g(\tilde Z_i(i/n),\eta_1 + \eta_2(i/n-t)b_n^{-1}) - g(\tilde Z_i(i/n), \eta_1' + \eta_2' (i/n-t')b_n^{-1})|\\
        &\le& \big[b_n^{-2} L_{\hat K}|t-t'| + b_n^{-1} |\hat K|_{\infty} \{|\eta - \eta'|_1 + |\eta_2| \cdot |t-t'|b_n^{-1}\big]\\
        &&\quad\quad\quad\quad\quad\quad\times\frac{1}{n}\sum_{i=1}^{n}\{2 + |\tilde Z_i(i/n)|_{\hat \chi}^{M(1+s)} + \||\tilde Z_i(i/n)|_{\hat \chi}^{M(1+s)}\|_1\}
    \end{eqnarray*}
    Since $E_n$ is compact, we have $\sup_{\eta \in E_n}|\eta_2|_1 < \infty$. Together with $\||\tilde Z_i(i/n)|_{\hat \chi}^{M(1+s)}\|_1 \le (D|\hat \chi|_1)^{M(1+s)}$, we obtain the result.
    
    (ii) We now have
    \begin{eqnarray*}
        &&|\hat G_n(t,\eta) - \hat G_n(t,\eta')|\\
        &\le& (nb_n)^{-1}\sum_{i=1}^{n}|\hat K_{b_n}(i/n-t)|\cdot \big|\tilde g_{\theta(i/n)}(\zeta_i,\tilde X_i(i/n),\eta_1 + \eta_2 (i/n-t)b_n^{-1}) \\
        &&\quad\quad\quad\quad\quad- \tilde g_{\theta(i/n)}(\zeta_i,\tilde X_i(i/n), \eta_1' + \eta_2'(i/n-t)b_n^{-1})\big|.
    \end{eqnarray*}
    Here, $|\eta - \eta_{b_n}(t)| < \iota/2$ implies $|(\eta_1 + \eta_2(i/n-t)b_n^{-1}) - \theta(t)| < \iota$ for $n$ large enough. Since $\theta(\cdot)$ is uniformly continuous, $|\theta - \theta(t)|_1 < \iota$, $|i/n-t| \le b_n$ imply $|\theta - \theta(i/n)|_1 < \iota$ for $n$ large enough. Since $\tilde g \in \sH_{\iota}^{mult}(M,\chi^{(s)},\bar C^{(s)})$, we obtain
    \begin{eqnarray*}
        &&|\hat G_n(t,\eta) - \hat G_n(t,\eta')|\\
        &\le& \bar C^{(s)} |\eta - \eta'|_1 (nb_n)^{-1}\sum_{i=1}^{n}|\hat K_{b_n}(i/n-t)| \cdot \{(1 + |\tilde X_i(i/n)|_{\chi}^{M})^{1+s}(1+|\zeta_i|^{M})^{1+s}\\
        &&\quad\quad\quad\quad\quad\quad\quad\quad\quad\quad\quad\quad\quad\quad\quad\quad + \|(1 + |\tilde X_i(i/n)|_{\chi}^{M})^{1+s}(1+|\zeta_i|^{M})^{1+s}\|_1\},
    \end{eqnarray*}
    giving the result.\\
    We have
    \begin{eqnarray*}
        && |\hat G_n(t,\eta) - \hat G_n(t',\eta')|\\
        &\le& (nb_n)^{-1}\sum_{i=1}^{n}|\hat K_{b_n}(i/n-t) - \hat K_{b_n}(i/n-t')|\\
        &&\quad\quad\times \sup_{|\eta - \eta_{b_n}(t)| < \iota/2}\{|\tilde g_{\theta(i/n)}(\zeta_i,\tilde X_i(i/n),\eta_1 + \eta_2(i/n-t)b_n^{-1})|\\
        &&\quad\quad\quad\quad\quad\quad\quad\quad\quad + \| \tilde g_{\theta(i/n)}(\zeta_i,\tilde X_i(i/n),\eta_1 + \eta_2 (i/n-t) b_n^{-1})\|_1\}\\
        &&+  (nb_n)^{-1}\sum_{i=1}^{n}|\hat K_{b_n}(i/n-t')|\cdot |\tilde g_{\theta(i/n)}(\zeta_i,\tilde X_i(i/n),\eta_1 + \eta_2(i/n-t)b_n^{-1}) \\
        &&\quad\quad\quad\quad\quad\quad\quad\quad\quad\quad\quad\quad\quad\quad - \tilde g_{\theta(i/n)}(\zeta_i,\tilde X_i(i/n), \eta_1' + \eta_2' (i/n-t')b_n^{-1})|.
    \end{eqnarray*}
    The same argumentation as before allows us to use the Lipschitz properties of $\tilde g_{\theta(i/n)}$ w.r.t. $\theta$, giving the result.
\end{proof}

\begin{lemma}\label{lemma:empprocess2} Let $\gamma > 1$. For $s \ge 0$, let $\chi_i^{(s)} = (\chi_i^{(s)})_{i\in\IN}$ be a sequence with $\chi_i^{(s)} = O(i^{-(1+\gamma)})$. Recall the notation from \reff{eq:proof_notation}. Suppose that either (in the assertion (a) below) Assumption \ref{ass1}\ref{ass1_stat}, \ref{ass1_dep} or (in the assertions (b),(c) below) Assumption \ref{ass3}\ref{ass3_stat}, \ref{ass3_dep} hold with some $r$ specified below. 
    \begin{enumerate}
        \item[(i)] Let $r \ge 1+\varsigma$, $\varsigma \ge 0$ and assume either that $\varsigma = 0$ and $g \in \sH(M,\chi^{(0)},\bar C^{(0)})$ or $\varsigma > 0$ and for all $s > 0$ small enough, $g \in \sH(M(1+s),\chi^{(s)},\bar C^{(s)})$. Then $$\|\sup_{t\in (0,1)}\sup_{\eta \in E_n}|\hat G_n(t,\eta) - G_n^c(t,\eta)|\|_1 = O((nb_n)^{-1}).$$
        \item[(ii)] Fix $t \in [0,1]$ and assume that $n b_n \to \infty$. Let $r \ge 1 + \varsigma$, $\varsigma > 0$.\\
        (a) If for all $s > 0$ small enough, $g \in \sH(M(1+s),\chi^{(s)},\bar C^{(s)})$, then
        $$\sup_{\eta \in E_n}|\hat G_n(t,\eta)| = o_{\IP}(1).$$
        (b) If for all $s > 0$ small enough, $\tilde g_{\tilde \theta}(y,x,\theta) := g(F(y,x,\tilde \theta),x,\theta)$ fulfills $\tilde g \in \sH^{mult}_{\iota}(M,\chi^{(s)},\bar C^{(s)})$, then
        $$\sup_{|\eta - \eta_{b_n}(t)| < \iota}|\hat G_n(t,\eta)| = o_{\IP}(1) \quad\quad \text{if} \quad b_n \to 0.$$
        (c) If for all $s > 0$ small enough, $g = \ell$ fulfills \reff{eq:additionalgarchlipschitz} and $g \in \sH(2M(1+s),\chi^{(s)},\bar C^{(s)})$, then
        $$\sup_{\eta \in E_n}|\hat G_n(t,\eta)| = o_{\IP}(1).$$
        \item[(iii)] Let $r \ge 2 + \varsigma$, $\varsigma > 0$. Define $\beta_n = \log(n)^{1/2} (nb_n)^{-1/2} b_n^{-1/2}$.\\
        (a) If for all $s>0$ small enough, $g \in \sH(M(1+s),\chi^{(s)},\bar C^{(s)})$, then $$\sup_{t\in (0,1)}\sup_{\eta \in E_n}|\hat G_n(t,\eta)| = O_{\IP}(\beta_n).$$ (b) If $g$ is such that $\tilde g_{\tilde \theta}(y,x,\theta) := g(F(y,x,\tilde \theta),x,\theta)$ fulfills $\tilde g \in \sH^{mult}_{\iota}(M,\chi^{(s)},\bar C^{(s)})$ for $s>0$ small enough, then
        \[
            \sup_{t\in (0,1)}\sup_{|\eta - \eta_{b_n}(t)|_1 < \iota}|\hat G(t,\eta)| = O_{\IP}(\beta_n).
        \]
        (c) If for all $s>0$ small enough, $g$ fulfills \reff{eq:additionalgarchlipschitz} and $g \in \sH(2M(1+s),\chi^{(s)},\bar C^{(s)})$, then
        \[
            \sup_{t\in (0,1)}\sup_{\eta \in E_n}|\hat G(t,\eta)| = O_{\IP}(\beta_n).
        \]
    \end{enumerate}
\end{lemma}
\begin{proof}[Proof of Lemma \ref{lemma:empprocess2}] 
We abbreviate $\chi = \chi^{(s)}$ and $\bar C = \bar C^{(s)}$.\\
(i) By Lemma \ref{lemma:hoelder}(i),(ii), we obtain that for some $C > 0$:
\begin{eqnarray*}
    \|\sup_{\theta \in \Theta}|g(Z_i, \theta) - g(Z_i^c,\theta)| \|_1 &\le& C \sum_{j=0}^{\infty}\hat \chi_j \|Z_{ij} - Z_{ij}^c\|_{M} \le 2C\sum_{j=i}^{\infty}\chi_j \|Z_{ij}\|_{M} \le 2CD \sum_{j=i}^{\infty}\chi_j.
\end{eqnarray*}
Similarly, we have for some $C > 0$ that
\begin{eqnarray*}
    \|\sup_{\theta \in \Theta}|g(Z_i, \theta) - g(\tilde Z_i(i/n),\theta)| \|_1 &\le& C \sum_{j=0}^{\infty} \hat\chi_j \|Y_{ij} - \tilde Y_{ij}(i/n)\|_{M}\Big)\\
    &\le& C C_A |\chi|_1 n^{-1}.
\end{eqnarray*}
Thus
\begin{eqnarray*}
    && \|\sup_{t\in (0,1)}\sup_{\eta \in E_n}|\hat G_n(t,\eta) - G_n^c(t,\eta)|\|_1\\
    &\le& |K|_{\infty}(nb_n)^{-1}\sum_{i=1}^{n}\|\sup_{\theta \in \Theta}|g(\tilde Z_i(i/n), \theta) - g(Z_i^c,\theta)| \|_1\\
    &\le& 2CD|K|_{\infty} (nb_n)^{-1}\sum_{i=1}^{n}\sum_{j=i}^{\infty}\chi_j + |K|_{\infty}CC_A |\chi|_1 (nb_n)^{-1} = O((nb_n)^{-1}).
\end{eqnarray*}
The last step is due to $\chi_j = O(j^{-(1+\gamma)})$, since this implies   $\sum_{i=1}^{n}\sum_{j=i}^{\infty}\chi_j = O(1)$. The proofs under Assumption \ref{ass3} are similar in view of Lemma \ref{lemma:hoeldergarchlikelihood}.

    (ii) (a) Fix $Q > 0$. Let $\kappa > 0$. Let $E_{n}^{(\kappa)}$ be a discretization of $E_n$ such that for each $\eta \in E_{n}$ one can find $\eta' \in E_{n}^{(\kappa)}$ with $|\eta - \eta'|_1 \le \kappa$. Note that $\# E_{n}^{(\kappa)}$ does not need to depend on $n$. Then
    \begin{eqnarray}
        \IP\big(\sup_{\eta \in E_n}|\hat G_n(t,\eta)| > Q\big) &\le& \# E_{n}^{(\kappa)}\sup_{\eta \in E_n}\IP\big(|\hat G_n(t,\eta)| > Q/2\big)\nonumber\\
        &&\quad\quad\quad + \IP(\sup_{|\eta - \eta'|_1 \le \kappa}|\hat G_n(t,\eta) - \hat G_n(t,\eta')| > Q/2).\label{eq:empprocess_markov}
    \end{eqnarray}
    By Markov's inequality, we have for $0 \le s \le \varsigma$,
    \[
        \IP\big(|\hat G_n(t,\eta)| > Q/2\big) \le \frac{\|\hat G_n(t,\eta)\|_{1+s}^{1+s}}{(Q/2)^{1+s}}.
    \]
    Using Burkholder's moment inequality (cf. \cite{burk88}) and Lemma \ref{lemma:depmeasure}(i) applied for $q =1+s$, $s > 0$ small enough, the computation
    \begin{eqnarray}
        &&\|\hat G_n(t,\eta)\|_{1+s} \\
        &\le& (nb_n)^{-1}\sum_{l=0}^{\infty}\Big\|\sum_{i=1}^{n}\hat K_{b_n}(i/n-t) P_{i-l}g(\tilde Z_i(i/n),\eta_1 + \eta_2(i/n-t) b_n^{-1})\Big\|_{1+s}\nonumber\\
        &\le& s^{-1}(nb_n)^{-1}\sum_{l=0}^{\infty}\Big(\Big\|\sum_{i=1}^{n}\hat K_{b_n}(i/n-t)^2 P_{i-l}^2g(\tilde Z_i(i/n),\eta_1 + \eta_2(i/n-t)b_n^{-1})\Big\|_{(1+s)/2}^{(1+s)/2}\Big)^{1/(1+s)}\nonumber\\
        &\le& s^{-1}(nb_n)^{-s/(1+s)}|\hat K|_{\infty}\sum_{l=0}^{\infty}\sup_{t\in[0,1]}\delta_{1+s}^{\sup_{\theta \in \Theta}|g(\tilde Z(t),\theta)|}(l) = O((nb_n)^{-s/(1+s)}), \nonumber \label{eq:empprocess_moment}
    \end{eqnarray}
     shows that the first summand in \reff{eq:empprocess_markov} tends to zero. For the second summand, Lemma \ref{lemma:property_lipschitz}(i) implies
    \[
        \IP(\sup_{|\eta - \eta'|_1 \le \kappa}|\hat G_n(t,\eta) - \hat G_n(t,\eta')| > Q/2) \le \frac{2\tilde C\kappa}{Q},
    \]
    which can be made arbitrary small by choosing $\kappa$ small enough. So we have shown that \reff{eq:empprocess_markov} tends to zero for $n\to\infty$.\\
    (b) The proof is similar to (a) by using \ref{lemma:property_lipschitz}(ii) and Lemma \ref{lemma:depmeasuregarch}(i) instead of Lemma \ref{lemma:property_lipschitz}(i) and Lemma \ref{lemma:depmeasure}(i).\\
    (c) The proof is similar to (a) by using Lemma \ref{lemma:depmeasure}(i)(*) instead of Lemma \ref{lemma:depmeasure}(i).

    (iii) (a) We use a chaining argument. Let $r = n^{3}$ and let $E_{n,r}$ be a discretization of $E_n$ such that for each $\eta \in E_n$ one can find $\eta' \in E_{n,r}$ with $|\eta - \eta'| \le r^{-1}$. Define $\sT_{n,r} := \{i/r: i = 1,\ldots,r\}$ as a discretization of $(0,1)$. Then $\# (E_{n,r} \times \sT_{n,r}) = O(r^{2d_{\Theta}+1})$.
   For some constant $Q > 0$, we have
    \begin{eqnarray}
        && \IP\Big(\sup_{\eta \in E_n, t \in (0,1)}|\hat G_{n}(t,\eta)| > Q \beta_n\Big)\nonumber\\
        &\le& \IP\Big(\sup_{\eta \in E_{n,r}, t \in \sT_{n,r}}|\hat G_{n}(t,\eta)| > Q \beta_n/2\Big)\nonumber\\
        &&\quad\quad + \IP\Big(\sup_{|\eta-\eta'| \le r^{-1}, |t-t'| \le r^{-1}}|\hat G_{n}(t,\eta) - \hat G_{n}(t',\eta')| > Q \beta_n/2\Big).\label{eq:proof_strategy_emp}
    \end{eqnarray}
    Let $\alpha = 1/2$. Let $M_i(t,\eta,u) := \hat K_{b_n}(u-t) g(\tilde Z_i(u),\eta_1 + \eta_2 (u-t)b_n^{-1})$. By Lemma \ref{lemma:depmeasure}(ii) applied with $q = 2 + s$, $s > 0$ small enough, we have $\sup_{u}\Delta^{\sup_{t,\eta}|M(t,\eta,u)|}_{2+s}(k) = O(k^{-(1+\gamma)})$. Thus
    \begin{eqnarray*}
        W_{2+s,\alpha} &:=&  \sup_{u\in[0,1]}\| \sup_{t,\eta}|M_i(t,\eta,u)| \|_{2+s,\alpha} = \sup_{m \ge 0} (m+1)^{\alpha}\sup_{u\in[0,1]}\sup_{t,\eta}\Delta_{2+s}^{\sup_{t,\eta}|M(t,\eta,u)|}(m) < \infty.
    \end{eqnarray*}
    (independent of $n$) and
    \begin{eqnarray*}
        W_{2,\alpha} &:=& \sup_{u\in[0,1]}\sup_{t,\eta}\|M_i(t,\eta,u)\|_{2,\alpha} = \sup_{m\ge 0}(m+1)^{\alpha} \sup_{u\in[0,1]}\sup_{t,\eta}\Delta_{2}^{M(t,\eta,u)}(m) < \infty
    \end{eqnarray*}
    (independent of $n$).
    Note that $l = 1 \wedge \log \#(E_{n,r} \times \sT_{n,r}) \le 3(2d_{\Theta}+1) \log(n)$ and $Q \beta_n (n b_n) = Q n^{1/2} \log(n)^{1/2} \geq \sqrt{nl}W_{2,\alpha}+n^{1/(2+s)} l^{3/2}W_{2+s,\alpha} \gtrsim n^{1/2} \log(n)^{1/2} + n^{1/(2+s)} \log(n)^{3/2}$ for $Q$ large enough.
    By applying Theorem 6.2 of \cite{zhangwu2017} (the proof therein also works for the uniform functional dependence measure) with $q = 2+s$ and $\alpha = 1/2$ to $(M_i(t,\eta,i/n))_{t\in \sT_{n,r}, \eta \in E_{n,r}}$, we have with some constant $C_{\alpha} > 0$:
     \begin{eqnarray}
        &&\IP\big(\sup_{\eta' \in E_{n,r}, t' \in \sT_{n,r}} |\hat G_{n}(t',\eta')| \ge Q\beta_n/2\big)\nonumber\\
        &\le& \frac{C_{\alpha}n\cdot l^{1+s/2}W_{2+s,\alpha}^{2+s}}{(Q/2)^{2+s}(\delta_n (nb_n))^{2+s}}+C_{\alpha} \exp \Big( -\frac{C_{\alpha}(Q/2)^2 (\beta_n (n b_n))^2}{n W_{2,\alpha}^2}\Big)\nonumber\\
        &\lesssim& n^{-s/2} + \exp\big(-\frac{(n b_n) b_n^{-1} \log(n)}{n} \big)\nonumber\\
        &\to& 0.\label{eq:proof_strategy_emp2}
    \end{eqnarray}
    By Markov's inequality and Lemma \ref{lemma:property_lipschitz}(i),
    \begin{equation}
        \IP\big(\sup_{|\eta - \eta'|_1 \le r^{-1}, |t - t'| \le r^{-1}} |\hat G_n(t,\eta) - \hat G_n(t',\eta')| \ge C\beta_n/2\big) = O\Big(\frac{b_n^{-2}r^{-1}}{\beta_n}\Big).\label{eq:proof_strategy_emp3}
    \end{equation}
    We have $b_n^{-2}r^{-1}\beta_n^{-1} = b_n^{-2} n^{-3} (n b_n)^{1/2} b_n^{1/2} \log(n)^{-1/2} \to 0$. Inserting \reff{eq:proof_strategy_emp2} and \reff{eq:proof_strategy_emp3} into \reff{eq:proof_strategy_emp}, we obtain the result.\\
    (b) The proof is similar to (a) by using \ref{lemma:property_lipschitz}(ii) and Lemma \ref{lemma:depmeasuregarch}(ii) instead of Lemma \ref{lemma:property_lipschitz}(i) and Lemma \ref{lemma:depmeasure}(ii).\\
    (c) The proof is similar to (a) by using \ref{lemma:property_lipschitz}(ii)(*) instead of Lemma \ref{lemma:property_lipschitz}(ii).
\end{proof}

\begin{lemma}\label{lemma:bias} Let $g:\IR^{\IN} \times \Theta \to \IR$, and define
\[
    \hat B_{n}(t,\eta) = (nb_n)^{-1}\sum_{i=1}^{n}\hat K_{b_n}(i/n-t)g(\tilde Z_i(i/n),\eta_1 + \eta_2(i/n-t)b_n^{-1}).
\]
\begin{enumerate}
    \item[(a)] If Assumption \ref{ass1}\ref{ass1_stat} is fulfilled with $r \ge 1+s$, $s \ge 0$ and $g \in \sH(M(1+s),\chi,\bar C)$, then
\[
    \sup_{t \in (0,1)}\sup_{\eta \in E_n}|\IE \hat B_n(t,\eta) - \int_{-t/b_n}^{(1-t)/b_n} \hat K(x)\IE g(\tilde Z_0(t),\eta_1 +
     \eta_2 x)dx| = O((nb_n)^{-1} + b_n).
\]
    \item[(b)] If Assumption \ref{ass3}\ref{ass3_stat} is fulfilled with $r \ge 1+s$ and $g$ is such that $\tilde g_{\tilde \theta}(y,x,\theta) := g(F(y,x,\tilde \theta),x,\theta)$ fulfills $\tilde g \in \sH^{mult}_{\iota}(M,\chi,\bar C)$, then
    \[
    \sup_{t \in (0,1)}\sup_{|\eta - \eta_{b_n}(t)| < \iota}|\IE \hat B_n(t,\eta) - \int_{-t/b_n}^{(1-t)/b_n} \hat K(x)\IE g(\tilde Z_0(t),\eta_1 +
     \eta_2 x)dx| = O((nb_n)^{-1} + b_n).
\]
\end{enumerate}
If the supremum is taken over $t \in \sT_n$ instead of $t \in (0,1)$, then $\int_{-t/b_n}^{(1-t)/b_n}$ can be replaced by $\int_{-1}^{1}$.
\end{lemma}
\begin{proof}[Proof of Lemma \ref{lemma:bias}]
    (a) Let $\tilde B_n(t,\eta) := (n b_n)^{-1}\sum_{i=1}^{n}\hat K_{b_n}(i/n-t) g(\tilde Z_i(t), \eta_1 + \eta_2 (i/n-t)b_n^{-1})$. By Lemma \ref{lemma:hoelder}(i), we have with some constant $\tilde C > 0$ that 
    \begin{eqnarray*}
        && \|g(\tilde Z_0(i/n),\eta_1 + \eta_2 (i/n-t)b_n^{-1}) - g(\tilde Z_0(t), \eta_1 + \eta_2 (i/n-t)b_n^{-1})\|_1\\
        &\le& \tilde C\sum_{i=0}^{\infty}\hat\chi_i \|\tilde Y_{-i}(i/n) - \tilde Y_{-i}(t)\|_M \le \tilde C C_B |\hat \chi|_1 b_n.
    \end{eqnarray*}
    Thus
    \begin{eqnarray*}
       && \|\hat B_n(t,\eta) - \tilde B_n(t,\eta)\|_1\\
        &\le& (n b_n)^{-1}\sum_{i=1}^{n}|\hat K_{b_n}(i/n-t)|\\
        &&\quad\quad\quad\quad\quad\quad\times\|g(\tilde Z_i(i/n),\eta_1 + \eta_2 (i/n-t)b_n^{-1}) - g(\tilde Z_i(t), \eta_1 + \eta_2 (i/n-t)b_n^{-1})\|_1\\
        &\le& \tilde C |\hat K|_{\infty} C_B(1+|\chi|_1) b_n.
    \end{eqnarray*}
    Since $\hat K$ is of bounded variation and $\theta \mapsto \IE g(\tilde Z_0(t),\theta)$ is Lipschitz continuous due to $g \in \sH(M,\chi,\bar C)$ and Lemma \ref{lemma:hoelder}, a Riemannian sum argument yields
    \begin{eqnarray*}
        \tilde B_n(t,\eta) &=& (nb_n)^{-1}\sum_{i=1}^{n}\hat K_{b_n}(i/n-t) \IE g(\tilde Z_0(t),\eta_1 + \eta_2(i/n-t)b_n^{-1})\\
        &=& \int_{-t/b_n}^{(1-t)/b_n} \hat K(x) \IE g(\tilde Z_0(t),\eta_1 + \eta_2 x) dx + O( (nb_n)^{-1}),
    \end{eqnarray*}
    uniformly in $t \in (0,1)$, $\eta \in E_n$.\\
    (b) The proof is the same by using Lemma \ref{lemma:hoeldergarch} with $q = 1$ instead of Lemma \ref{lemma:hoelder}.
\end{proof}

\subsection{Basic Lipschitz, bias and dependence results}

Lemmas \ref{lemma:hoelder}, \ref{lemma:hoeldergarchlikelihood} and  \ref{lemma:hoeldergarch} state how the deviation of $g(Z_i,\theta) - g(Z_i',\theta)$ can be controlled by $Z_i - Z_i'$. In the GARCH case, this needs two results due to different treatments for the first and second derivative of the likelihood.


In Lemmas \ref{lemma:depmeasure} and \ref{lemma:depmeasuregarch}, the dependence measure of $g(\tilde Z_i(t),\theta)$ is calculated based on the H\"older-type results in Lemmas \ref{lemma:hoelder}, \ref{lemma:hoeldergarchlikelihood} and  \ref{lemma:hoeldergarch} for both Cases 1 and 2.

\begin{lemma}\label{lemma:hoelder} Let $q > 0$. Let $g \in \sH(M,\chi,\bar C)$.  Let $\hat Z = (\hat Z_j)_{j\in\IN_0}$, $\hat Z' = (\hat Z_j')_{j\in\IN_0}$ be sequences of random variables. Assume that there exists some $D > 0$ such that for all $j\in\IN_0$,
\begin{equation}
    \|\hat Z_j\|_{qM} \le D, \quad\quad \|\hat Z_j'\|_{qM} \le D.\label{eq:hoelder0}
\end{equation}
Then there exists some constant $C > 0$ only dependent on $M$, $D$, $\chi$ and $\tilde D$ (only in (ii)) such that
    \begin{eqnarray}
        \| \sup_{\theta \in \Theta}|g(\hat Z, \theta) - g(\hat Z',\theta)|\|_q &\le& \bar C\cdot C \sum_{j=0}^{\infty}\hat \chi_j\|\hat Z_j - \hat Z_j'\|_{qM},\label{eq:hoelder1}\\
        \big\|\sup_{\theta\not=\theta'}\frac{|g(\hat Z, \theta) - g(\hat Z, \theta')|}{|\theta-\theta'|_1}\big\|_q &\le& \bar C \cdot C,\label{eq:hoelder2}\\
         \|\sup_{\theta \in \Theta}|g(\hat Z, \theta)|\|_q &\le& \bar C \cdot C,  \label{eq:hoelder3}
    \end{eqnarray}
\end{lemma}
\begin{proof}[Proof of Lemma \ref{lemma:hoelder}] Note that
\[
    \| |\hat Z|_{\hat \chi} \|_{qM} \le \sum_{j=1}\hat \chi_j \|\hat Z_j\|_{qM} \le D |\hat \chi|_1.
\]
We have by H\"older's inequality that
    \begin{eqnarray}
        && \|\sup_{\theta \in \Theta}|g(\hat Z,\theta)-g(\hat Z',\theta)|\|_q\nonumber\\
        &\le& \bar C\big\| |\hat Z - \hat Z'|_{\hat \chi} (1+|\hat Z|_{\hat \chi}^{M-1} + |\hat Z'|_{\hat \chi}^{M-1})\big\|_q\nonumber\\
        &\le& \bar C\big\| |\hat Z - \hat Z'|_{\hat \chi} \big\|_{qM} \big(1 + \big\||\hat Z|_{\hat \chi}\big\|_{qM}^{M-1} + \big\||\hat Z'|_{\hat \chi}\big\|_{qM}^{M-1}\big)\nonumber\\
        &\le& \bar C(1+2(D |\hat \chi|_1)^{M-1})\cdot \sum_{j=1}^{\infty}\hat \chi_j \|\hat Z_j - \hat Z_j'\|_{qM},\nonumber
    \end{eqnarray}
    which shows \reff{eq:hoelder1}. The proof of \reff{eq:hoelder3} is obvious from \reff{eq:hoelder1} and $\sup_{\theta\in\Theta}|g(0,\theta)| \le \bar C$. Finally,
    \[
        \Big\|\sup_{\theta \not= \theta'}\frac{|g(\hat Z,\theta)-g(\hat Z,\theta')|}{|\theta - \theta'|_1}\|_q \le  \bar C\big\|1 + |\hat Z|_{\hat \chi}^{M}\big\|_q \le C(1+ D|\hat \chi|_1).
   \]
\end{proof}

The following lemma states the same results as Lemma \ref{lemma:hoelder} under a different continuity condition on $g$ as it is given in the tvGARCH case.

\begin{lemma}[for tvGARCH]\label{lemma:hoeldergarchlikelihood} Let $q > 0$ and $s > 0$. Let $\hat Z'$ be as in Lemma \ref{lemma:hoelder} satisfying \reff{eq:hoelder0} with $M$ replaced by $M(1+s)$. Let $g = \ell$ satisfy \reff{eq:additionalgarchlipschitz} and $g \in \sH(M(1+s),\chi^{(s)},\bar C^{(s)})$. Then there exists some constant $C^{(s)} > 0$ only dependent on $M$, $D$, $\chi^{(s)}$ such that
    \begin{eqnarray}
        &&\| \sup_{\theta \in \Theta}|g(\hat Z, \theta) - g(\hat Z',\theta)|\|_q\nonumber\\
        &\le& \bar C^{(s)}\cdot C^{(s)} \sum_{j=0}^{\infty}\hat \chi_j^{(s)}\big(\|\hat Z_j - \hat Z_j'\|_{qM(1+s)} + \|\hat Z_j - \hat Z_j'\|_{qM(1+s)}^{s}\big),\label{eq:hoelder1garchlikelihood}
    \end{eqnarray}
    and
    \begin{eqnarray}
         \|\sup_{\theta \in \Theta}|g(\hat Z, \theta)|\|_q &\le& \bar C^{(s)} \cdot C^{(s)},  \label{eq:hoelder3garchlikelihood}
    \end{eqnarray}
    where $\hat \chi^{(s)} = (1,\chi^{(s)})$.
\end{lemma}
\begin{proof}[Proof of Lemma \ref{lemma:hoeldergarchlikelihood}]
    By H\"older's inequality,
    \begin{eqnarray*}
        && \big\|\sup_{\theta \in \Theta}|g(\hat Z, \theta) - g(\hat Z',\theta)|\big\|_q\\
        &\le& \bar C^{(s)}\big\| |\hat Z - \hat Z'|_{\chi^{(s)},s}\cdot (1+|\hat Z|_{\hat \chi}^M + |\hat Z'|_{\hat \chi}^M)\big\|_q\\
        &&\quad\quad + \bar C^{(s)}\big\| |\hat Z - \hat Z'|_{\chi^{(s)},1} \cdot (1+|\hat Z|_{\hat \chi}^{M-1} + |\hat Z'|_{\hat \chi}^{M-1})^{1+s}\big\|_q\\
        &\le&  \bar C^{(s)}\sum_{j=0}^{\infty}\hat \chi_j^{(s)}\| \hat Z_j - \hat Z_j' \|_{q(M+s)}^s \cdot \big(1 + \| |\hat Z|_{\hat \chi}\|_{q(M+s)}^{M} + \| |\hat Z'|_{\hat \chi}\|_{q(M+s)}^{M}\big)\\
        &&\quad\quad + \bar C^{(s)}\sum_{j=0}^{\infty}\hat \chi_j^{(s)}\|\hat Z_j - \hat Z_j'\|_{qM(1+s)}\big(1 + \| |\hat Z|_{\hat \chi}\|_{q(M+s)}^{M-1} + \| |\hat Z'|_{\hat \chi}\|_{q(M+s)}^{M-1}\big)\\
        &\le& \bar C^{(s)} \big(1+(2|\hat \chi|_1)^M + (2|\hat \chi|_1)^{M-1}\big)\cdot \sum_{j=0}^{\infty}\hat \chi_j^{(s)}\big(\|\hat Z_j - \hat Z_j'\|_{qM(1+s)}^s + \|\hat Z_j - \hat Z_j'\|_{qM(1+s)}\big).
    \end{eqnarray*}
    This shows \reff{eq:hoelder1garchlikelihood}. The second result \reff{eq:hoelder3garchlikelihood} follows from \reff{eq:hoelder1garchlikelihood} with $\hat Z' = 0$ and $\sup_{\theta \in \Theta}|g(0,\theta)| < \bar C$ by assumption.
\end{proof}

\begin{lemma}[for tvGARCH]\label{lemma:hoeldergarch} Let $q > 0, \iota > 0$. Let $\tilde g \in \sH_{\iota}^{mult}(M,\chi,\bar C)$. Let $\hat X = (\hat X_j)_{j\in\IN}$, $\hat X' = (\hat X_j')_{j\in\IN}$ be sequences of random variables. Assume that there exists some $D > 0$ such that for all $j\in\IN$,
\begin{equation}
    \|\hat X_j\|_{qM} \le D, \quad\quad \|\hat X_j'\|_{qM} \le D.\label{eq:hoelder0_new}
\end{equation}
Let $\zeta_0$ be independent of $\hat X$, $\hat X'$ with $\|\zeta_0\|_{qM} \le D$. Then there exists some constant $C > 0$ only dependent on $M$, $D$, $\chi$, $\bar C$  such that
    \begin{eqnarray}
        \big\| \sup_{|\theta - \tilde \theta| < \iota}|\tilde g_{\tilde \theta}(\zeta_0,\hat X, \theta) - \tilde g_{\tilde \theta}(\zeta_0,\hat X',\theta)|\big\|_q &\le& \bar C\cdot C \sum_{j=1}^{\infty}\chi_j\|\hat X_j - \hat X_j'\|_{qM},\label{eq:hoelder1garch}\\
        \big\|\sup_{\theta\not=\theta',|\theta-\tilde \theta|_1 < \iota, |\theta' - \tilde \theta|_1 < \iota}\frac{|\tilde g_{\tilde \theta}(\zeta_0,\hat X, \theta) - \tilde g_{\tilde \theta}(\zeta_0,\hat X, \theta')|}{|\theta - \theta'|_1}\big\|_q &\le& \bar C \cdot C,\label{eq:hoelder2garch}\\
         \big\|\sup_{|\theta - \tilde \theta|_1 < \iota}|\tilde g_{\tilde \theta}(\zeta_0,\hat X,\theta)|\big\|_q &\le& \bar C \cdot C.  \label{eq:hoelder3garch}
    \end{eqnarray}
\end{lemma}
\begin{proof}[Proof of Lemma \ref{lemma:hoeldergarch}]
    With H{\" o}lder's inequality,
    \begin{eqnarray*}
        &&\big\|\sup_{|\theta - \tilde \theta| < \iota}|g_{\tilde \theta}(\zeta_0, \hat X, \theta) - g_{\tilde \theta}(\zeta_0, \hat X', \theta)| \big\|_q\\
        &\le& \bar C\big\| |\hat X - \hat X'|_{\chi} (1 + |\hat X|_{\chi}^{M-1} + |\hat X'|_{\chi}^{M-1}) (1+|\zeta_0|^M)\big\|_q\\
        &\le& \bar C\big\| |\hat X - \hat X'|_{\chi}\big\|_{qM} (1 + \big\||\hat X|_{\chi}\big\|_{qM}^{M-1} + \big\||\hat X'|_{\chi}\big\|_{qM}^{M-1}) (1+\|\zeta_0\|_{qM}^M)\\
        &\le& \bar C(1+2(D|\chi|_1)^{M-1})(1+D^M)\cdot \sum_{j=1}^{\infty}\chi_j \|\hat X_j - \hat X_j'\|_{qM}.
    \end{eqnarray*}
    This shows \reff{eq:hoelder1garch}. The result \reff{eq:hoelder2garch} follows similarly as in Lemma \ref{lemma:hoelder}. Using \reff{eq:hoelder1garch} with $\hat X' = 0$ and
    \[
       \big\|\sup_{|\theta - \tilde \theta| < \iota}|\tilde g_{\tilde \theta}(\zeta_0,0,\theta)|\big\|_q \le \bar C\|1+|\zeta_0|^{M}\|_q \le \bar C (1 + D^M),
    \]
    we obtain \reff{eq:hoelder3garch}.
\end{proof}

\begin{lemma}\label{lemma:depmeasure}
    Let $q \ge 1$. Suppose that Assumption \ref{ass1}\ref{ass1_stat}, \ref{ass1_dep} hold with some $r \ge q$. Let $g \in \sH(M,\chi,\bar C)$, where $\chi_i = O(i^{-(1+\gamma)})$. Then it holds that
    \begin{enumerate}
        \item[(i)] $\sup_{t\in[0,1]}\delta_q^{\sup_{\theta}|g(\tilde Z(t),\theta)|}(j) = O(j^{-(1+\gamma)})$.
        \item[(ii)] For $M_i(t,\eta,u) := \hat K_{b_n}(u-t) g(\tilde Z_i(u),\eta_1 + \eta_2 (u-t) b_n^{-1})$, we have
        \[
            \sup_{u\in[0,1]}\sup_{t,\eta}\delta_q^{M(t,\eta,u)}(j) = O(j^{-(1+\gamma)}),\quad\quad  \sup_{u\in[0,1]}\delta^{\sup_{t,\eta}|M(t,\eta,u)|}_q(j) =  O(j^{-(1+\gamma)}).
        \]
        \item[(iii)] Let $d_{u}(t) = \theta(u) - \theta(t)-(u-t)\theta'(t)$ and $M_i^{(2)}(t,u) := \hat K_{b_n}(u-t) \{\int_0^{1}g(\tilde Z_i(u),\theta(t) + s d_{u}(t)) ds \} \cdot d_{u}(t)$. Then it holds for each component that
        \[
            \sup_{u\in[0,1]}\delta_q^{M^{(2)}(t,u)}(j) = O(b_n^2 j^{-(1+\gamma)}), \quad\quad \sup_{u\in[0,1]}\delta^{\sup_{t}|M^{(2)}(t,u)|}_q(j) = O(b_n^2 j^{-(1+\gamma)}).
        \]
    \end{enumerate}
    (*) If instead Assumption \ref{ass3}\ref{ass3_stat}, \ref{ass3_dep} hold with some $r > q$ and $g = \ell$ fulfills \reff{eq:additionalgarchlipschitz} for all $s  > 0$ small enough, then the statements above remain valid.
\end{lemma}
\begin{proof}
    (i) Let $\tilde Z_j(t)^{*}$ be a coupled version of $\tilde Z_j(t)$ where $\zeta_0$ is replaced by $\zeta_0^{*}$. By Lemma \ref{lemma:hoelder} we obtain that with some constant $\tilde C > 0$:
    \begin{eqnarray}
        && \delta^{\sup_{\theta}|g(\tilde Z(t),\theta)|}_q(j)\nonumber\\
        &=& \|\sup_{\theta}|g(\tilde Z_j(t),\theta)| - \sup_{\theta}|g(\tilde Z_j(t)^{*},\theta)|\|_q\nonumber\\
        &\le& \|\sup_{\theta}|g(\tilde Z_j(t),\theta) - g(\tilde Z_j(t)^{*},\theta)|\|_q\nonumber\\
        &\le& \tilde C \sum_{i=0}^{\infty}\hat \chi_i \|\tilde Z_{j-i}(t) - \tilde Z_{j-i}(t)^{*}\|_{qM} \le \tilde C \sum_{i=0}^{j}\hat\chi_i \delta_{qM}^{\tilde Y(t)}(j-i).\label{eq:lemma_dep1}
    \end{eqnarray}
    In case (*), let $s > 0$ be such that $q(1+s) < r$.  Then we have by Lemma \ref{lemma:hoeldergarchlikelihood}, there exists some $\tilde C > 0$ such that
    \begin{eqnarray}
        \delta_q^{\sup_{\theta\in \Theta}|g(\tilde Z(t),\theta)|}(j) &\le& \tilde C \sum_{i=0}^{\infty} \hat \chi_i^{(s)}\big(\|\tilde Z_{j-i}(t) - \tilde Z_{j-i}(t)^{*}\|_{qM(1+s)}  + \|\tilde Z_{j-i}(t) - \tilde Z_{j-i}(t)^{*}\|_{qM(1+s)}^s\big)\nonumber\\
        &\le& \tilde C \sum_{i=0}^{j}\chi_i^{(s)}\big(\delta_{qM(1+s)}^{\tilde Y(t)}(j-i) + [\delta_{qM(1+s)}^{\tilde Y(t)}(j-i)]^s\big). \label{eq:lemma_dep3}
    \end{eqnarray}
    Note that if two sequences $a_i,b_i$ with $a_i = b_i = 0$ for $i < 0$ obey $a_i,b_i = O(i^{-(1+\gamma)})$ then the convolution $c_j = \sum_{i=1}^{\infty}a_i b_{j-i+1}$ still obeys $c_j = O(j^{-(1+\gamma)})$ due to
    \begin{eqnarray*}
        |c_j| &\le& \sum_{i=1, i \ge (j+1)/2}^{j+1}|a_i|\cdot |b_{j-i+1}| + \sum_{i=1, |j-i| \ge (j+1)/2}^{j+1}|a_i| |b_{j-i+1}|\\
        &\le& \big(\frac{j+1}{2}\big)^{-(1+\gamma)}\sum_{i=1}^{j+1}|b_{j-i+1}| + \big(\frac{j+1}{2}\big)^{-(1+\gamma)}\sum_{i=1}^{j+1}|a_i| = O(j^{-(1+\gamma)}).
    \end{eqnarray*}
    Together with Assumption \ref{ass1_dep} and \reff{eq:lemma_dep1}  \underline{or} (in case (*)) Assumption \ref{ass3}\ref{ass3_dep} and \reff{eq:lemma_dep3}, this shows $\sup_{t\in[0,1]}\delta_r^{g(\tilde Z(t),\theta)}(j) = O(j^{-(1+\gamma)})$.\\
    
    The proof for (ii),(iii) is the same since
    \begin{eqnarray*}
        \big|\sup_{t,\eta}|M_i(t,\eta,u)| - \sup_{t,\eta}|M_{i}(t,\eta,u)^{*}|\big| &\le& \sup_{t,\eta}|M_{i}(t,\eta,u) - M_{i}(t,\eta,u)^{*}|\\
        &\le& |\hat K|_{\infty}\sup_{\theta}|g(\tilde Z_i(u),\theta) - g(\tilde Z_{i}(u)^{*},\theta)|
    \end{eqnarray*}
    and (since $|d_{u}(t)|_{\infty}  \le \sup_{s}|\theta''(s)|_{\infty}\cdot b_n^2$ if $|t-u| \le b_n$), for each $l$,
    \begin{eqnarray*}
        && \big|\sup_{t}|\tilde M_i^{(2)}(t,u)_l| - \sup_{t}|\tilde M_{i}^{(2)}(t,u)_l^{*}|\big| \le \sup_{t}|M_{i}^{(2)}(t,u)_l - M_{i}^{(2)}(t,u)_l^{*}|\\
        &\le& |\hat K|_{\infty}\sup_{s}|\theta''(s)|_{\infty} b_n^2 \\
        &&\quad\quad\times\sup_{t}\int_0^{1}|g(\tilde Z_i(u),\theta(t)+s d_{u}(t)) - g(\tilde Z_i(u)^{*},\theta(t) + s d_{u}(t))| ds\\
        &\le& |\hat K|_{\infty}\sup_{s}|\theta''(s)|_{\infty} b_n^2 \sup_{\theta \in \Theta}|g(\tilde Z_i(u),\theta) - g(\tilde Z_i(u)^{*},\theta)|.
    \end{eqnarray*}
\end{proof}

\begin{lemma}[for tvGARCH]\label{lemma:depmeasuregarch}
    Let $q \ge 1$. Suppose that Assumption \ref{ass3}\ref{ass3_stat}, \ref{ass3_dep} hold with some $r > q$. For $s > 0$, let $\chi^{(s)} = (\chi_i^{(s)})_{i\in\IN}$ be a sequence with  $\chi_i^{(s)} = O(i^{-(1+\gamma)})$. Let $g$ be such that $\tilde g_{\tilde \theta}(y,x,\theta) := g(F(x,\tilde \theta,y),x,\theta)$ fulfills $\tilde g \in \sH_{\iota}^{mult}(M,\chi^{(s)},\bar C^{(s)})$ for all $s > 0$ small enough. Then
    \begin{enumerate}
        \item[(i)] $\sup_{t\in[0,1]}\delta_q^{\sup_{|\theta-\theta(t)|_1 < \iota}|g(\tilde Z(t),\theta)|}(j) = O(j^{-(1+\gamma)})$.
        \item[(ii)] For $n$ large enough,
        \[
            \sup_{u\in[0,1]}\sup_{t,|\eta - \eta_{b_n}(t)|_1 < \iota/2}\delta_q^{M(t,\eta,u)}(j) = O(j^{-(1+\gamma)}), \quad\quad \sup_{u\in[0,1]}\delta^{\sup_{t,|\eta - \eta_{b_n}(t)|_{1} < \iota/2}|M(t,\eta,u)|}_q(j) =  O(j^{-(1+\gamma)}).
        \]
        \item[(iii)] For $n$ large enough,  $\sup_{u\in[0,1]}\delta_q^{M^{(2)}(t,u)}(j) = O(b_n^2 j^{-(1+\gamma)})$, and $\sup_{u\in[0,1]}\delta^{\sup_{t}|M^{(2)}(t,u)|}_q(j) = O(b_n^2 j^{-(1+\gamma)})$.
    \end{enumerate}
\end{lemma}
\begin{proof}[Proof of Lemma \ref{lemma:depmeasuregarch}]
    (i) Let $\tilde Z_j(t)^{*}$ be a coupled version of $\tilde Z_j(t)$ where $\zeta_0$ is replaced by $\zeta_0^{*}$. By Lemma \ref{lemma:hoeldergarch} we obtain that with some constant $\tilde C > 0$:
    \begin{eqnarray*}
        && \delta^{\sup_{|\theta-\theta(t)|_1 < \iota}|g(\tilde Z(t),\theta)|}_q(j)\nonumber\\
        &\le& \|\sup_{|\theta-\theta(t)|_1 < \iota}|\tilde g_{\theta(t)}(\zeta_j,\tilde X_j(t),\theta) - \tilde g_{\theta(t)}(\zeta_j,\tilde X_j(t)^{*},\theta)|\|_q\nonumber\\
        &\le& \tilde C\sum_{i=1}^{\infty}\chi_i \|\tilde X_{j-i+1}(t) - \tilde X_{j-i+1}(t)^{*}\|_{qM}\nonumber\\
        &\le& \tilde C \sum_{i=1}^{\infty}\chi_i \delta_{qM}^{\tilde Y(t)}(j-i+1).
    \end{eqnarray*}
    The result now follows as in the proof of Lemma \ref{lemma:depmeasure}(i) with Assumption \ref{ass3}\ref{ass3_dep}.
    
    (ii) We have for $n$ large enough that
    \[
        |\eta - \eta_{b_n}(t)|_1 = |\eta_1 - \theta(t)|_1 + |\eta_2 - b_n\theta'(t)|_1 < \iota/2\quad\text{ implies }\quad|(\eta_1 + \eta_2(u-t)b_n^{-1}) - \theta(t)|_1 \le |\eta_1 - \theta(t)|_1 + |\eta_2|_1 < \iota
    \]
    and $|\theta - \theta(t)|_1 < \iota$, $|u-t| \le b_n$ implies $|\theta - \theta(u)|_1 < \iota$ due to uniform continuity of $\theta(\cdot)$. Therefore, we have for $n$ large enough:
    \begin{eqnarray*}
        &&\big| \sup_{t,|\eta-\eta_{b_n}(t)|_1 < \iota/2}|M_i(t,\eta,u)| - \sup_{t,|\eta-\eta_{b_n}(t)|_1 < \iota/2}|M_i(t,\eta,u)^{*}|\big|\\
        &\le& \sup_{t,|\eta - \eta_{b_n}(t)|_1 < \iota/2}|\hat K_{b_n}(u-t)|\cdot |g(\tilde Z_i(u),\eta_1 + \eta_2 (u-t)b_n^{-1}) - g(\tilde Z_i(u),\eta_1 + \eta_2 (u-t)b_n^{-1})|\\
        &\le& \sup_{t,|\theta - \theta(t)|_1 < \iota}|\hat K_{b_n}(u-t)|\cdot |\tilde g_{\theta(u)}(\zeta_i,\tilde X_i(u),\theta) - \tilde g_{\theta(u)}(\zeta_i,\tilde X_i(u)^{*},\theta)|\\
        &\le& |\hat K|_{\infty}\cdot \sup_{|\theta - \theta(u)|_1 < \iota}|\tilde g_{\theta(u)}(\zeta_i,\tilde X_i(u),\theta) - \tilde g_{\theta(u)}(\zeta_i,\tilde X_i(u)^{*},\theta)|.
    \end{eqnarray*}
    The rest works as in (i).\\
    (iii) For $n$ large enough, it holds that $|u-t| \le b_n$ implies that $\sup_{s\in[0,1]}|\theta(t) + s d_u(t) - \theta(u)| < \iota$ due to uniform continuity of $\theta(\cdot)$. Thus
    \begin{eqnarray*}
        && \big| \sup_{t}|M_i^{(2)}(t,u)| - \sup_t |M_i^{(2)}(t,u)^{*}|\big|\\
        &\le& |\hat K|_{\infty}\sup_s|\theta''(s)|_{\infty} b_n^2 \sup_{|\theta - \theta(u)| < \iota}|\tilde g_{\theta(u)}(\zeta_i,\tilde X_i(u),\theta) - \tilde g_{\theta(u)}(\zeta_i, \tilde X_i(u)^{*}, \theta)|.
    \end{eqnarray*}
    The rest works as in (i).
\end{proof}

\section{Proofs of the Assumption sets}\label{sec:proofs_assumptions}
In this section, we prove that Assumption \ref{ass:case1} implies Assumption \ref{ass1} (Case 1) and that Assumption \ref{ass:case2} implies Assumption \ref{ass3} (Case 2).

We need the following general statements (cf. \cite{householder}, page 46 or \cite{dahlhaus2009}, proof of Proposition 2.1). Let $|x|_1 := \sum_{j=1}^{d}|x_j|$ denote the 1-norm for $x \in \IR^d$.

\begin{lemma}\label{lemma_matrixnorm_householder}
    Let $A \in \IR^{d \times d}$ be a matrix and let $\rho(A) := \max\{|\lambda|: \lambda\text{ eigenvalue of }A\}$ be the largest absolute eigenvalue of $A$. Let $\varepsilon > 0$. Then there exists an invertible matrix $M = M(\varepsilon) \in \IR^{d\times d}$, such that the norm
    \[
        |x|_M := |M^{-1}x|_1, \quad x\in\IR^d
    \]
    on $\IR^d$ and the corresponding matrix norm $|A|_M := \sup\{|A x|_M:|x|_M=1\}$ on $\IR^{d\times d}$ satisfies
    \[
        |A|_M \le \rho(A) + \varepsilon.
    \]
\end{lemma}

Similar as in the \cite{dahlhaus2009}, proof of Proposition 2.1, we obtain the following conclusion.

\begin{lemma}\label{lemma_matrixnorm}
    Let $A:[0,1] \to \IR^{d \times d}$ be a continuous function. Let $\rho := \sup_{u\in[0,1]}\rho(A(u)) < 1$.  Let $\varepsilon > 0$. Then there exist invertible matrices $M_1,...,M_L \in \IR^{d\times d}$ such that the following holds: There exists a partition $[0,1] = \bigcup_{k=1}^{L}I_k$ into intervals  $I_k$ such that for $u \in I_k$,
    \[
        |A(u)|_{M_k} \le \rho + \varepsilon.
    \]
    Furthermore, there exists a constant $c_0 > 0$ such that for any $A \in \IR^{d\times d}$,
    \[
        |A|_1 := \sum_{i,j=1}^{d}|A_{i,j}| \le c_0\cdot  \inf_{k=1,...,L}|A|_{M_k}.
    \]
\end{lemma}
\begin{proof}[Proof of Lemma \ref{lemma_matrixnorm}]
    We adopt the proof of \cite{kunsch1995note}. For $u\in[0,1]$, let $M(u)$ denote the matrix associated to $A(u)$ and $\frac{\varepsilon}{2}$ from Lemma \ref{lemma_matrixnorm_householder}. Since $v \mapsto A(v)$ is continuous, there exists $\delta(u) > 0$ such that for all $v \in (u-\delta(u),u+\delta(u))$,
    \begin{equation}
        |A(v)|_{M(u)} \le |A(v) - A(u)|_{M(u)} + |A(u)|_{M(u)} \le \frac{\varepsilon}{2} + \rho(A(u)) + \frac{\varepsilon}{2} \le \rho + \varepsilon.\label{lemma_matrixnorm_proof_eq1}
    \end{equation}
    Since $[0,1]$ is compact and $((u-\delta(u),u+\delta(u)))_{u\in[0,1]}$ is a covering of open sets, there exist finitely many $u_1,...,u_L \in [0,1]$ such that $[0,1] \subset \bigcup_{k=1}^{L}(u_k-\delta(u_k),u_k+\delta(u_k))$. Let $M_k := M(u_k)$.
    
    We now prove the first assertion. Then for any $v \in [0,1]$, let $k \in \{1,...,L\}$ be such that $v \in (u_k-\delta(u_k),u_k+\delta(u_k))$. Then by \reff{lemma_matrixnorm_proof_eq1}, $|A(v)|_{M_k} \le \rho + \varepsilon$. The second assertion follows since all norms are equivalent on $\IR^{d\times d}$, so in particular $|\cdot|_M$ and $|\cdot|_1$ are equivalent.
\end{proof}

The following result adopts a lemma of \cite{duflo1997}, Lemma 6.2.10 (Section 6.2) therein, to time-varying iterative models.

\begin{lemma}\label{lemma_randomiterativemodel}
    Suppose that $z_t$, $t > -p$ and $\eta_t$, $t > 0$ are two sequences of positive real numbers such that for each $t\in\IN$, there exist $a_{1},...,a_{p}:[0,1] \to [0,\infty)$ with $\sup_{u\in[0,1]}\sum_{k=1}^p a_{k}(u) < 1$ and 
    \[
        z_t \le \sum_{k=1}^{p}a_{k}(t/n) z_{t-k} + \eta_t, \quad t=1,...,n.
    \]
    Then there exists $0 \le a < 1$ and some constant $c \ge 0$ such that
    \[
        z_t \le c\cdot \Big(\sum_{k=0}^{t-1}a^k \eta_{t-k} + a^t \cdot |(z_0,...,z_{-p+1})\tran|_1\Big), \quad t = 1,...,n.
    \]
\end{lemma}
\begin{proof}[Proof of Lemma \ref{lemma_randomiterativemodel}]
    Define the companion matrix
    \[
        C(u) := \begin{pmatrix}a_{1}(u) & a_{2}(u) & \dots & \dots &  a_{p}(u)\\
        1 & 0 & \dots & \dots & 0\\
        0 & 1 & 0 & \dots & 0\\
        \vdots & \ddots & \ddots & \ddots & \vdots\\
        0 & \dots & 0 & 1 & 0\\
        \end{pmatrix}
    \]
    to the characteristic polynomial $Q_{u}(x) = 1 - a_{1}(u) z - ... - a_{p}(u) z^p$. Because of  $\sup_{u\in[0,1]}\sum_{k=1}^p a_{k}(u) < 1$, the polynomial $Q_u$ is causal, thus $\tilde a := \sup_{u\in[0,1]}\rho(C(u)) < 1$.
    
    Define $\xi_t := -z_t + (\sum_{k=1}^{p}a_{k}(t/n)z_{t-k} + \eta_t) \ge 0$. Then $z_t^{(p)} := (z_t,...,z_{t-p+1})\tran$ and $\varsigma_t := (\eta_t - \xi_t, 0,...,0)\tran$ satisfy
    \[
        z_t^{(p)} = C(t/n)\cdot z_{t-1}^{(p)} + \varsigma_t, \quad t \in \IN.  
    \]
    Let $H_t := (\eta_t, 0,...,0)\tran$. With $C_{s,t} := C(t/n) \cdot ... \cdot C((s+1)/n)$, we obtain (the inequalities are meant component-wise)
    \begin{eqnarray*}
        z_t^{(p)} &=& C_{0,t}\cdot z_0^{(p)} + \sum_{k=0}^{t-1}C_{t-k,t}\varsigma_{t-k} \le C_{0,t}\cdot z_0^{(p)} + \sum_{k=0}^{t-1}C_{t-k,t}H_{t-k}.
    \end{eqnarray*}
    Fix some $\alpha \in (\tilde \alpha,1)$. Application of Lemma \ref{lemma_matrixnorm} to $C(u)$ and $\varepsilon = \alpha - \tilde \alpha$ yields $L\in\IN$, $M_1,...,M_L \in \IR^{p\times p}$ and a partition $[0,1] = \bigcup_{k=1}^{L}I_k$ into intervals $I_k$ (larger $k$ means that $I_k$ contains larger values) such that for $u \in I_k$,
    \[
        |C(u)|_{M_k} \le \alpha.
    \]
    We obtain again with Lemma \ref{lemma_matrixnorm} that
    \begin{eqnarray}
        |C_{s,t}|_1 &=& \Big|\prod_{k=1}^{L}\Big(\prod_{u \in \{\frac{s+1}{n},...,\frac{t}{n}\}\cap I_k}C(u)\Big)\Big|_1 \le \prod_{k=1}^{L}\Big|\prod_{u \in \{\frac{s+1}{n},...,\frac{t}{n}\}\cap I_k}C(u)\Big|_1\nonumber\\
        &\le& c_0^L\prod_{k=1}^{L}\Big|\prod_{u \in \{\frac{s+1}{n},...,\frac{t}{n}\}\cap I_k}C(u)\Big|_{M_k}\le c_0^L\prod_{k=1}^{L}\prod_{u \in \{\frac{s+1}{n},...,\frac{t}{n}\}\cap I_k}\big|C(u)\big|_{M_k}\nonumber\\
        &\le& c_0^L\prod_{k=1}^{L}\prod_{u \in \{\frac{s+1}{n},...,\frac{t}{n}\}\cap I_k}\alpha =  c_0^{L}\alpha^{t-s}.\label{lemma_randomiterativemodel_eq1}
    \end{eqnarray}
\end{proof}

\subsection{Case 1: Recursively defined models}

\begin{proposition}\label{example:tvrec} If Assumption \ref{ass:case1} holds, then Assumption \ref{ass1} is fulfilled with every $r = 2 + \tilde a$, $\tilde a < a$, the corresponding $M$ and $\gamma > 2$ arbitrarily large.

It holds that $V(t) = \Lambda(t)$. If (i) $\IE \zeta_0^3 = 0$, or (ii) $\mu(x,\theta) \equiv 0$ or (iii) $\sigma(x,\theta) \equiv \beta_0$ and $\IE m(\tilde X_0(t)) = 0$, then $$I(t) = \big(\begin{smallmatrix}I_k & 0\\
0 & (\IE \zeta_0^4 - 1) I_{l+1}/2\end{smallmatrix}\big)\cdot V(t),$$ where $I_d$ denotes the $d$-dimensional identity matrix.
\end{proposition}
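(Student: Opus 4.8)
The proposition splits into two independent parts, and I would treat them separately.

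\emph{Part 1: Assumption \ref{ass:case1} $\Rightarrow$ Assumptions \ref{ass1}, \ref{ass2}.} First I would construct the stationary approximation $\tilde Y_i(t)$ as the unique stationary solution of the recursion \reff{eq:tvrec_model} with $\theta(\cdot)$ frozen at $\theta(t)$ and driven by $(\zeta_i)$. The contraction condition \reff{example:parameterconditions}, combined with the Lipschitz bounds \reff{example:tvrec_eq1} on $m_i$ and $\sqrt{\nu_i}$, makes the associated random map a contraction in $\sL_{rM}$ with $rM=(2+\tilde a)\cdot 3<(2+a)\cdot 3$; since $\IE|\zeta_0|^{(2+a)M}<\infty$ with $M=3$, the standard iterated-random-function/fixed-point argument gives existence, uniqueness and the uniform moment bound $\sup_t\|\tilde Y_0(t)\|_{rM}\le D$ of \ref{ass1_stat}. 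The local-stationarity bounds $\|Y_i-\tilde Y_i(i/n)\|_{rM}=O(n^{-1})$ and $\|\tilde Y_0(t)-\tilde Y_0(t')\|_{rM}=O(|t-t'|)$ then follow by coupling the exact and frozen recursions and invoking the $C^4$ (hence Lipschitz) regularity of $\theta(\cdot)$. Because the contraction forces the functional dependence measure $\delta^{\tilde Y(t)}_{rM}(k)$ to decay geometrically, the polynomial decay demanded by \ref{ass1_dep} holds for \emph{every} $\gamma$, so $\gamma>2$ can be taken arbitrarily large.

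Next I would verify the smoothness conditions \ref{ass1_smooth} and \ref{ass2_smooth} by direct differentiation of $\ell$. The two structural facts that make everything work are that $\sigma(x,\theta)^2=\sum_i\beta_i\nu_i(x)\ge\beta_{min}\nu_{min}$ is bounded below, and simultaneously $\sigma(x,\theta)^2\gtrsim\max\{1,|x|^2\}$ because $\nu_0\ge\nu_{min}$ and $\sqrt{\nu_i}$ is Lipschitz; the latter lets the powers of $\sigma^{-2}$ in the denominators of $\nabla_\theta\ell,\nabla_\theta^2\ell,\nabla_\theta^3\ell$ absorb the polynomial growth of the $m_i,\nu_i$ in $x$, leaving only quadratic growth in $y$. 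Tracking these degrees shows that each of $\ell,\nabla_\theta\ell,\nabla_\theta^2\ell,\nabla_\theta^3\ell$ lies in $\sH(M_y,M_x,\chi,\bar C)$ with the value of $M$ recorded in Assumption \ref{ass:case1} ($M=3$ in general, $M=2$ when $\sigma\equiv\beta_0$), and $\chi$ may be taken finitely supported since $m_i,\nu_i$ depend only on $x_1,\dots,x_p$. Identifiability \ref{ass1_model} and the uniform positive-definiteness \ref{ass1_matrix} of $V,I,\Lambda$ are precisely where the linear-independence hypothesis (Assumption \ref{ass:case1}, part 2) enters, guaranteeing that $\theta\mapsto L(t,\theta)$ has the unique minimizer $\theta(t)$ and that the relevant quadratic forms are non-degenerate. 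Finally, \ref{ass2_deriv}, the existence of the derivative process $\partial_t\tilde Z_0(t)$ with its moment and Lipschitz bounds, follows from the derivative-process theory of \citet{dahlhaus2017} for Markov-type recursions, again using the contraction and the $C^4$ regularity of $\theta(\cdot)$.

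\emph{Part 2: the variance identities.} The engine here is the martingale-difference structure of the score. Writing $\tilde Y_i(t)=\mu+\sigma\zeta_i$ at the true parameter, one computes $\nabla_{\alpha_j}\ell=-\zeta_i m_j/\sigma$ and $\nabla_{\beta_j}\ell=\frac{\nu_j}{2\sigma^2}(1-\zeta_i^2)$; since $\zeta_i$ is independent of the past with $\IE\zeta_i=0$, $\IE\zeta_i^2=1$, each component has conditional mean zero, so $\{\nabla_\theta\ell(\tilde Z_i(t),\theta(t))\}_{i\in\IZ}$ is a martingale difference sequence. Consequently every cross term with $j\neq0$ in \reff{eq:lambdadef_paper} vanishes and the series defining $\Lambda(t)$ collapses to its single instantaneous term, giving the first asserted identity relating $\Lambda(t)$ and $I(t)$. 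Differentiating once more and using independence of $\zeta_i$ from $(m_i,\nu_i,\sigma)$ then yields the block computations $V_{\alpha\alpha}=\IE[m_im_j/\sigma^2]=I_{\alpha\alpha}$, $V_{\beta\beta}=\IE[\nu_i\nu_j/(2\sigma^4)]$ with $I_{\beta\beta}=\tfrac{\IE\zeta_0^4-1}{2}V_{\beta\beta}$, and $V_{\alpha\beta}=0$ while $I_{\alpha\beta}=\tfrac{\IE\zeta_0^3}{2}\IE[m_i\nu_j/\sigma^3]$. The cross block of $I$ therefore vanishes exactly under (i) $\IE\zeta_0^3=0$, or (ii) $\mu\equiv0$ (the $\alpha$-block is empty), or (iii) $\sigma\equiv\beta_0$ together with $\IE m(\tilde X_0(t))=0$; in each case $I(t)$ takes the stated block-diagonal form relative to $V(t)$.

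\emph{Main obstacle.} The delicate part is the degree bookkeeping in Part 1: verifying membership of $\ell$ and its first three $\theta$-derivatives in the classes $\sH(M_y,M_x,\chi,\bar C)$ with uniform constants, which hinges on the two-sided control $\beta_{min}\nu_{min}\le\sigma^2$ and $\sigma^2\gtrsim\max\{1,|x|^2\}$ and on confirming that the count closes at the stated $M$. The contraction/coupling construction and the moment algebra for the identities are then routine once the stationary approximation and its geometrically decaying dependence measure are in hand.
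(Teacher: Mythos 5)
Your overall route is the same as the paper's: existence and geometric decay of the dependence measure for $\tilde Y_i(t)$ via the contraction induced by \reff{example:parameterconditions} (the paper invokes Proposition 4.3 and Lemma 4.4 of \citet{dahlhaus2017}), explicit differentiation of $\ell$ to place it and its $\theta$-derivatives in the classes $\sH(M_y,M_x,\chi,\bar C)$ with finitely supported $\chi$, the derivative-process machinery of \citet{dahlhaus2017} for \ref{ass2_deriv}, and the martingale-difference structure of the score plus explicit block computations for the variance identities. However, the step you yourself single out as the delicate one rests on a false premise. The ``structural fact'' $\sigma(x,\theta)^2\gtrsim\max\{1,|x|^2\}$ is not implied by Assumption \ref{ass:case1}: the hypotheses give only the constant lower bound $\sigma(x,\theta)^2\ge\beta_{min}\nu_{min}$ together with the Lipschitz property of $\sqrt{\nu_i}$, which is an \emph{upper} bound $\nu_i(x)\lesssim 1+|x|^2$; nothing forces $\sigma^2$ to grow in $x$ (take tvAR, where $\sigma^2\equiv\beta_0$ while $m$ grows linearly, or any model with bounded $\nu_i$ and unbounded $m_i$). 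Consequently the denominators do \emph{not} absorb the polynomial growth of $m_i,\nu_i$ in $x$, and your conclusion ``leaving only quadratic growth in $y$'' contradicts the value $M=3$ you then assert --- if the $x$-growth were absorbed one would get $M=2$ in all cases. The paper's verification instead bounds $\sigma^{-2}$ by a constant, keeps the residual growth in $x$, and lands in $\sH(2,3,\tilde\chi,\tilde C)$, i.e.\ $M_x=3$, which is precisely why $M=3$ (and $(2+a)M$ moments of $\zeta_0$) are needed in general, with the improvement to $M=2$ only when $\sigma^2\equiv\beta_0$. The degree bookkeeping has to be redone with the one-sided bound.

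A smaller point on Part 2: the martingale-difference property collapses the series \reff{eq:lambdadef_paper} to its $j=0$ term and therefore yields $\Lambda(t)=I(t)$, not the first asserted identity $V(t)=\Lambda(t)$; indeed your own block computations (which agree with the paper's) give $I_{\beta\beta}=\tfrac{\IE\zeta_0^4-1}{2}V_{\beta\beta}$, so $\Lambda=I$ and $V=\Lambda$ cannot both hold unless $\IE\zeta_0^4=3$. The paper's proof makes the same elision, so you inherit rather than introduce the problem, but as written your argument does not establish $V(t)=\Lambda(t)$. You also skip the small continuity-in-$\tilde a$ step needed to upgrade \reff{example:parameterconditions} from $\|\zeta_0\|_{2M}$ to $\|\zeta_0\|_{(2+\tilde a)M}$ before the contraction argument can be run in $\sL_{(2+\tilde a)M}$; the remaining sketches (identifiability from the linear-independence hypothesis, positive definiteness of $V,I,\Lambda$, the coupling bounds for \ref{ass1_stat}) are consistent with what the paper does in detail.
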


\begin{proof}[Proof of Proposition \ref{example:tvrec}] Choose $0 < \tilde a< a$ small enough such that \reff{example:parameterconditions} holds with $\|\zeta_0\|_{(2+\tilde a)M}$ replaced by $\|\zeta_0\|_{2M}$ (this is possible due to continuity of the term in $\tilde a = 0$).
Let $q = (2+\tilde a)M$. Let $\nu = (\nu_0,\ldots,\nu_l)\tran$ and $m = (m_1,\ldots,m_k)\tran$.

Define $W_n(y,t) := G_{\zeta_n}(G_{\zeta_{n-1}}(...G_{\zeta_1}(y,t)...,t),t)$, where
\[
    G_{\zeta}(y,t) := \mu(y,\theta(t)) + \sigma(y,\theta(t))\zeta.
\]
We have
\begin{eqnarray*}
    |\sigma(y,\theta)^2 - \sigma(y',\theta)^2| &\le& \sum_{i=0}^{l}\beta_i |\nu_i(y) - \nu_i(y')|\\
    &\le& \sum_{i=0}^{l}\sqrt{\beta_i} |y-y'|_{\rho_{i\cdot},1}\cdot \big(\sqrt{\beta_i \nu_i(y)} + \sqrt{\beta_i \nu_i(y')}\big)\\
    &\le& \sum_{i=0}^{l}\sqrt{\beta_i} |y-y'|_{\rho_{i\cdot},1} \cdot \big(\sigma(y,\theta) + \sigma(y',\theta)\big),
\end{eqnarray*}
i.e.
\[
    |\sigma(y,\theta) - \sigma(y',\theta)| \le \sum_{i=0}^{l}\sqrt{\beta_i}|y-y'|_{\rho_{i\cdot},1},
\]
We have
\begin{eqnarray}
    \|G_{\zeta_0}(y,t) - G_{\zeta_0}(y',t)\|_q &\le& |\mu(y,t) - \mu(y',t)| + |\sigma(y,t) - \sigma(y',t)|\cdot \|\zeta_0\|_q\nonumber\\
    &\le& \sum_{i=1}^{k}|\alpha_i(t)|\cdot |m_i(y) - m_i(y')| + \sum_{i=0}^{l}\sqrt{\beta_i(t)}|y-y'|_{\rho_{i\cdot,1}}\nonumber\\
    &\le& \sum_{i=1}^{k}|\alpha_i(t)|\cdot |y-y'|_{\kappa_{i\cdot},1} + \sum_{i=0}^{l}\sqrt{\beta_i(t)}|y-y'|_{\rho_{i\cdot,1}}\nonumber\\
    &=& \sum_{i=1}^{k}|\alpha_i(t)|\cdot \sum_{j=1}^{p}\kappa_{ij}|y_j-y_j'| + \sum_{i=0}^{l}\sqrt{\beta_i(t)}\sum_{j=1}^{p}\rho_{ij}|y_j-y_j'|\nonumber\\
    &=& \sum_{j=1}^{p}\Big(\sum_{i=1}^{k}\kappa_{ij}|\alpha_i(t)| + \sum_{i=0}^{l}\rho_{ij}\sqrt{\beta_i(t)}\Big)\cdot |y_j - y_j'|.\label{proooooooof1}
\end{eqnarray}
Define
\[
    a_j(t) := \sum_{i=1}^{k}\kappa_{ij}|\alpha_i(t)| + \sum_{i=0}^{l}\rho_{ij}\sqrt{\beta_i(t)}.
\]
Then we have with $z_n(t) := \|W_n(y,t) - W_n(y',t)\|_q$, $n \ge p+1$:
\[
    z_n(t) \le \sum_{j=1}^{p}a_j(t)\cdot z_{n-j}(t).
\]
Since $\sup_{t\in[0,1]}\sum_{j=1}^{p}a_j(t) < 1$ for $\theta(t) \in \Theta$,  Lemma \ref{lemma_randomiterativemodel} implies that with some $a\in(0,1)$, $c > 0$,
\[
    z_n(t) \le c\cdot a^{n-p}\cdot |(z_{p},...,z_1)\tran|_1.
\]
By definition of $W_n(\cdot)$ and \reff{proooooooof1}, $\|z_j(t)\|_q \le c\cdot |y-y'|_1$ for $j \in \{1,...,p\}$. Thus, for some constant $c > 0$,
\[
    \|W_n(y,t) - W_n(y',t)\|_q \le c\cdot a^{n}\cdot |y-y'|_1.
\]
By \cite{wushao2004}, Theorem 2, we obtain $\sup_{t\in[0,1]}\delta^{\tilde Y(t)}(k) = O(\tilde \rho^k)$ with some $\tilde \rho \in (0,1)$ and \[
    D :=\sup_{t\in[0,1]}\|\tilde Y_0(t)\|_q < \infty.
\]
Thus, \ref{ass1}\ref{ass1_dep} holds 
with arbitrarily large $\gamma > 0$.



By Lipschitz continuity of $\theta$ with constant $L_{\theta}$, we have
\begin{equation}
    |\mu(y,\theta(t)) - \mu(y,\theta(t'))| \le L_{\theta}|t-t'| \sum_{i=1}^{k}|m_i(y)|,\label{eq:ex4}
\end{equation}
and
\begin{eqnarray*}
    |\sigma(y,\theta(t))^2 - \sigma(y,\theta(t'))^2| &\le& L_{\theta}|t-t'| \sum_{i=0}^{l}\sqrt{\nu_i(y)}\frac{1}{2\beta_{min}^{1/2}}\big(\sqrt{\beta_i(t) \nu_i(y)} + \sqrt{\beta_i(t')\nu_i(y)}\big)\\
    &\le& \frac{L_{\theta}}{2 \beta_{min}^{1/2}}|t-t'| \sum_{i=0}^{l}\sqrt{\nu_i(y)} (\sigma(y,\theta(t)) + \sigma(y,\theta(t'))),
\end{eqnarray*}
which shows that
\begin{equation}
    |\sigma(y,\theta(t)) - \sigma(y,\theta(t'))| \le \frac{L_{\theta}}{2 \beta_{min}^{1/2}}\sum_{i=0}^{l}\sqrt{\nu_i(y)}\label{eq:ex5}.
\end{equation}
Note that \reff{example:tvrec_eq1} implies
\[
    m_i(y), \sqrt{\nu_i(y)} \le C_1|y|_1 + C_2,
\]
with some constants $C_1, C_2 > 0$. By \reff{eq:ex4}, \reff{eq:ex5}, we have for $t\not=t'$
\begin{eqnarray*}
    \|G_{\zeta_0}(y,t) - G_{\zeta_0}(y,t')\|_q &\le& |\mu(y,\theta(t)) - \mu(y,\theta(t'))| + \|\zeta_0\|_q |\sigma(y,\theta(t)) - \sigma(y,\theta(t'))|\\
    &\le& C_3|t-t'|\big(1 + |y|_1\big),
\end{eqnarray*}
with some constant $C_3 > 0$. This implies for $q \ge 1$,
\begin{eqnarray*}
    \|\tilde Y_i(t) - \tilde Y_i(t')\|_q &=&  \|G_{\zeta_i}(\tilde Y_{i-1}(t),...,\tilde Y_{i-p}(t),t) - G_{\zeta_i}(\tilde Y_{i-1}(t'),...,\tilde Y_{i-p}(t'),t')\|_q\\
    &\le& \|G_{\zeta_i}(\tilde Y_{i-1}(t),...,\tilde Y_{i-p}(t),t) - G_{\zeta_i}(\tilde Y_{i-1}(t'),...,\tilde Y_{i-p}(t'),t)\|_q\\
    &&\quad\quad + \|G_{\zeta_i}(\tilde Y_{i-1}(t'),...,\tilde Y_{i-p}(t'),t) - G_{\zeta_i}(\tilde Y_{i-1}(t'),...,\tilde Y_{i-p}(t'),t')\|_q\\
    &\le& \sum_{j=1}^{p}a_j(t)\cdot \|Y_{i-j}(t) - Y_{i-j}(t')\|_q + C_3 |t-t'|\big(1+\|\,|(\tilde Y_{i-1}(t'),...,\tilde Y_{i-p}(t'))|_1\|_q\big)\\
    &\le& \sum_{j=1}^{p}a_j(t)\cdot \|Y_{i-j}(t) - Y_{i-j}(t')\|_q + C_3 \big(1+pD\big)\cdot |t-t'|.
\end{eqnarray*}
with $D= \sup_{t}\|\tilde Y_0(t)\|_q < \infty$. We have $\rho := \sup_{t\in[0,1]}\sum_{j=1}^{p}a_j(t) < 1$, thus by stationarity,
\[
    \|\tilde Y_0(t) - \tilde Y_0(t')\|_q \le \underbrace{\Big(\sum_{j=1}^{p}a_j(t)\Big)}_{\le \rho}\cdot \|\tilde Y_0(t) - \tilde Y_0(t')\|_q + C_3 \big(1+pD\big)\cdot |t-t'|.
\]
Thus
\begin{equation}
     \|\tilde Y_0(t) - \tilde Y_0(t')\|_q \le \frac{C_3 \big(1+pD\big)}{1-\rho}\cdot |t-t'|.\label{proofffff1}
\end{equation}
It remains to show that
\[
    \|Y_i - \tilde Y_i(i/n)\|_q \le C n^{-1}.
\]
Here,
\begin{eqnarray*}
    &&\|Y_i - \tilde Y_i(i/n)\|_q =  \|G_{\zeta_i}( Y_{i-1},...,Y_{i-p},i/n) - G_{\zeta_i}(\tilde Y_{i-1}(i/n),...,\tilde Y_{i-p}(i/n),i/n)\|_q\\
    &\le& \sum_{j=1}^p a_j(i/n) \|Y_{i-j}-\tilde Y_{i-j}(i/n) \|_q\\
    &\le& \sum_{j=1}^p a_j(i/n) \|Y_{i-j}-\tilde Y_{i-j}((i-j)/n \vee 0) \|_q+ \sum_{j=1}^p a_j(i/n) \|\tilde Y_{i-j}((i-j)/n \vee 0)-\tilde Y_{i-j}(i/n) \|_q \\
    &\le& \sum_{j=1}^p a_j(i/n) \|Y_{i-j}-\tilde Y_{i-j}((i-j)/n \vee 0) \|_q +\rho C_1 p/n 
\end{eqnarray*}
for some $C_1>0$, where the last step is due to \reff{proofffff1}. Define $z_i:=\|Y_i - \tilde Y_i(i/n \vee 0)\|_q$. Note that $z_i=0$ for $i \le 0$. Using $\eta_i=\rho C_1 p/n$ at Lemma \ref{lemma_randomiterativemodel}, one obtains that $$\|Y_i - \tilde Y_i(i/n)\|_q \leq C\rho p/n$$
for some $C>0$. It follows that $\sup_{i,n}\|Y_i\|_q \le \sup_{i,n}\|Y_i - \tilde Y_i(i/n)\|_q + \sup_{i,n}\|\tilde Y_i(i/n)\|_q < \infty$. Thus, we have shown Assumption \ref{ass1}\ref{ass1_stat},

We now inspect the properties of the function $\ell$. First note that the recursion of the stationary approximation,
\[
    \tilde Y_i(t) = \mu(\tilde X_i(t), \theta(t)) + \sigma(\tilde X_i(t),\theta(t)) \zeta_i,
\]
implies $\IE \tilde Y_0(t) = 0$ and $\IE \tilde Y_0(t)^2 = \IE \mu(\tilde X_0(t),\theta(t))^2 + \IE \sigma(\tilde X_0(t),\theta(t))^2 \ge \beta_{min}\nu_{min} > 0$. Furthermore, for $L(t,\theta) := \IE \ell(\tilde Z_0(t),\theta)$ it holds that
\begin{eqnarray}
    L(t,\theta) - L(t,\theta(t)) &=& \IE\Big(\frac{\mu(\tilde X_0(t),\theta) - \mu(\tilde X_0(t),\theta(t))}{\sigma(\tilde X_0(t),\theta)}\Big)^2\nonumber\\
    &&\quad\quad\quad\quad + \IE\Big[\frac{\sigma(\tilde X_0(t),\theta(t))^2}{\sigma(\tilde X_0(t),\theta)^2}  - \log\frac{\sigma(\tilde X_0(t),\theta(t))^2}{\sigma(\tilde X_0(t),\theta)^2}-1\Big].\label{eq:ex7}
\end{eqnarray}
In the following we use the notation $|x|_{A}^2 := x\tran A x$ for a weighted vector norm. Note that
\begin{equation}
    \IE\Big(\frac{\mu(\tilde X_0(t),\theta) - \mu(\tilde X_0(t),\theta(t))}{\sigma(\tilde X_0(t),\theta)}\Big)^2 \ge c_0 |\alpha - \alpha(t)|_{M_1(t)}^2,\label{eq:ex8}
\end{equation}
with $c_0 = (\max_{\theta \in \Theta}\max_i \theta_i^2)^{-1}$ and $M_1(t) := \IE[\frac{m(\tilde X_0(t)) m(\tilde X_0(t))\tran}{\Ii \nu(\tilde X(t))\nu(\tilde X(t))\tran \Ii}]$. If $M_1(t)$ was not positive definite, this would imply that there exists $v \in \IR^k$ such that $v' M(t)v = 0$, which in turn would imply $v'\mu(\tilde X_0(t)) \mu(\tilde X_0(t))v = 0$ a.s. and thus non-positive definiteness of $\IE[\mu(\tilde X_0(t))\mu(\tilde X_0(t))\tran]$ which is a contradiction to the assumption.\\
By a Taylor expansion of $f(x) = x-\log(x)-1$, we obtain
\begin{eqnarray}
    && \IE\Big[\frac{\sigma(\tilde X_0(t),\theta(t))^2}{\sigma(\tilde X_0(t),\theta)^2}  - \log\frac{\sigma(\tilde X_0(t),\theta(t))^2}{\sigma(\tilde X_0(t),\theta)^2}-1\Big]\nonumber\\
    &\ge& \frac{1}{2}\IE\Big[\frac{(\sigma(\tilde X_0(t),\theta)^2 - \sigma(\tilde X_0(t),\theta(t))^2)^2}{(\sigma(\tilde X_0(t),\theta)^2 - \sigma(\tilde X_0(t),\theta(t))^2)^2 + \sigma(\tilde X_0(t),\theta)^4}\Big]\nonumber\\
    &\ge& \frac{c_0}{10}|\beta - \beta(t)|_{M_2(t)}^2,\label{eq:ex9}
\end{eqnarray}
where $M_2(t) = \IE[\frac{\nu(\tilde X_0(t)) \nu(\tilde X_0(t))\tran}{\Ii \nu(\tilde X(t))\nu(\tilde X(t))\tran \Ii}]$ is positive definite by assumption (use a similar argumentation as above). By \reff{eq:ex7}, \reff{eq:ex8} and \reff{eq:ex9} we conclude that $\theta \mapsto L(t,\theta)$ is uniquely minimized in $\theta = \theta(t)$. This shows \ref{ass1}\ref{ass1_model}.

Omitting the arguments $z = (y,x)$ and $\theta$, we have
\begin{eqnarray}
    \ell &=& \frac{1}{2}\Big[\frac{(y-\langle \alpha,m\rangle)^2}{\langle \beta,\nu\rangle} + \log \langle \beta,\nu\rangle\Big]\label{eq:exlikelihood0},\\
    \nabla_{\theta}\ell &=& -\frac{\nabla_{\theta}m}{\sigma}\Big(\frac{y - m}{\sigma}\Big) + \frac{\nabla_{\theta}(\sigma^2)}{2\sigma^2}\Big[1 - \Big(\frac{y-m}{\sigma}\Big)^2\Big]\nonumber\\
    &=& \begin{pmatrix}
        -\frac{m}{\sigma}\Big(\frac{y-m}{\sigma}\Big)\\
        \frac{\nu}{2\sigma^2}\big[1-\big(\frac{y-m}{\sigma}\big)^2\big]
    \end{pmatrix} = \begin{pmatrix}
    \frac{m}{\langle \beta,\nu\rangle}(y - \langle \alpha,m\rangle)\\
    \frac{\nu}{2\langle \beta,\nu\rangle}\big(1 - \frac{(y - \langle \alpha,m\rangle)^2}{\langle \beta,\nu\rangle}\big)
    \end{pmatrix},\label{eq:exlikelihood1} \\
    \nabla_{\theta}^2\ell &=& \frac{\nabla_{\theta}m \nabla_{\theta}m\tran}{\sigma^2} + \big(\frac{y-m}{\sigma}\big)\cdot\Big[ \frac{\nabla_{\theta}m \nabla_{\theta}(\sigma^2)\tran + \nabla_{\theta}(\sigma^2) \nabla_{\theta}m\tran}{\sigma^3} - \frac{\nabla_{\theta}^2m}{\sigma}\Big]\nonumber\\
    &&\quad\quad\quad + \frac{\nabla_{\theta}^2(\sigma^2)}{2\sigma^2}\Big[1 - \big(\frac{y-m}{\sigma}\big)^2\Big] + \frac{\nabla_{\theta}(\sigma^2) \nabla_{\theta}(\sigma^2)\tran}{2\sigma^4}\Big[2\Big(\frac{y-m}{\sigma}\big)^2 - 1\Big]\nonumber\\
    &=& \begin{pmatrix}
        \frac{mm\tran}{\sigma^2} & \frac{y-m}{\sigma^2}\cdot m \nu\tran\\
        \frac{y-m}{\sigma^2}\cdot \nu m\tran & \frac{\nu \nu\tran}{2\sigma^4}\big[2 \big(\frac{y-m}{\sigma}\big)^2 - 1\big]
    \end{pmatrix}\nonumber\\
    &=& \begin{pmatrix}
    \frac{m m\tran}{\langle \beta,\nu\rangle} & \frac{y-\langle \alpha,m\rangle}{\langle \beta,\nu\rangle^2}\cdot m \nu\tran\\
    \frac{y-\langle \alpha,m\rangle}{\langle \beta,\nu\rangle^2}\cdot\nu m\tran & \frac{\nu \nu\tran}{2\langle \beta,\nu\rangle^2}\big[2 \frac{(y-\langle \alpha,m\rangle)^2}{\langle \beta,\nu\rangle}- 1\big]
    \end{pmatrix}\label{eq:exlikelihood2}.
\end{eqnarray}
Since $\zeta_1$ is independent of $\tilde X_{0}(t) \in \sF_{0}$ and $\IE \zeta_1 = 0$, $\IE \zeta_1^2  = 1$, we conclude that
\[
    \IE[\nabla_{\theta}\ell(\tilde Z_0(t),\theta(t))|\sF_{t-1}] = \IE\Big[-\frac{\mu(\tilde X_{j}(t),\theta(t))}{\sigma(\tilde X_0(t),\theta(t))}\zeta_{0} + \frac{\nu(\tilde X_0(t),\theta(t))}{2\sigma(\tilde X_j(t),\theta(t))^2}(1-\zeta_{0}^2)\big|\sF_{t-1}\Big] = 0,
\]
i.e. $\nabla_{\theta}\ell(\tilde Z_1(t),\theta(t))$ is a martingale difference sequence, showing that $V(t) = \Lambda(t)$.
We furthermore have that (we omit the arguments $(\tilde X_0(t),\theta(t))$ of $\mu$, $\sigma$ in the following):
\[
    V(t) = \IE \nabla_{\theta}^2\ell(\tilde Z_0(t),\theta(t)) = \begin{pmatrix}
        \IE\big[\frac{mm\tran}{\langle \beta,\nu\rangle}\big] & 0\\
        0 & \IE\big[\frac{\nu\nu\tran}{2\langle \beta,\nu\rangle^2}\big]
    \end{pmatrix}.
\]
With a similar argumentation as above, we conclude that $V(t)$ is positive definite (which then implies by continuity that the smallest eigenvalue of $V(t)$ is bounded away from 0 uniformly in $t$).
By the martingale difference property, $I(t) = \Lambda(t)$. Omitting the arguments $(\tilde X_0(t),\theta(t))$, 
\begin{eqnarray*}
    I(t) &=& \IE[\nabla_{\theta}(\tilde Z_j(t),\theta(t)) \nabla_{\theta}(\tilde Z_0(t),\theta(t))\tran]\\
    &=& \begin{pmatrix}
        \IE\big[\frac{mm\tran}{\sigma^2}\big] & \IE[\zeta_0^3]\cdot \IE\big[\frac{m \nu\tran}{2\sigma^3}\big]\\
        \IE[\zeta_0^3]\cdot \IE\big[\frac{\nu m\tran}{2\sigma^3}\big] & \frac{\IE[\zeta_0^4] - 1}{4}\cdot \IE\big[\frac{\nu \nu\tran}{\sigma^4}\big]
    \end{pmatrix}\\
    &=& \IE\Big[\frac{1}{\sigma^2}\begin{pmatrix}
        m\\
        \frac{\IE[\zeta_0^3]}{2\sigma}\nu
    \end{pmatrix}\tran \begin{pmatrix}
        m\\
        \frac{\IE[\zeta_0^3]}{2\sigma}\nu
    \end{pmatrix}\Big] + \begin{pmatrix}
        0 & 0\\
        0 & \big(\frac{\IE[\zeta_0^4] - \IE[\zeta_0^3]^2-1}{4}\big) \IE\big[\frac{\nu\nu\tran}{\sigma^4}\big]
    \end{pmatrix},
\end{eqnarray*}
which is positive semidefinite since $\IE[\zeta_0^3] = \IE[\zeta_0 (\zeta_0^2 - 1)] \le \IE[\zeta_0^2]^{1/2}\IE[(\zeta_0^2 - 1)^2]^{1/2} = (\IE[\zeta_0^4] - 1)^{1/2}$. Positive definiteness follows from the fact that $(v_1,v_2)\tran I(t)(v_1,v_2) = 0$ implies $\nu\tran v_2 = 0$ a.s. from the last summand and $v_1\tran m + \frac{\IE[\zeta_0^3]}{2\sigma}v_2 \tran \nu = 0$ a.s. from the first summand,  i.e. $v_1\tran m = 0$ a.s. which leads to a contradiction to either the positive definiteness of $\IE[\nu\nu\tran]$ or $\IE[mm\tran]$. So we obtain that Assumption \ref{ass1}\ref{ass1_matrix} is fulfilled.

A careful inspection of \reff{eq:exlikelihood0}, \reff{eq:exlikelihood1} and \reff{eq:exlikelihood2} shows that $\ell, \nabla_{\theta}\ell, \nabla_{\theta}^2\ell \in \sH(3,\tilde \chi,\tilde C)$ with some $\tilde C > 0$ and $\tilde \chi = (1,\ldots,1,0,0,\ldots)$ consisting of $\max\{k,l\}$ ones followed by zeros, which shows Assumption \ref{ass1}\ref{ass1_smooth}. In the special case $\mu(x,\theta) \equiv 0$, it seems as if no direct improvement of the value $M$ is possible. In the special case of $\sigma(x,\theta)^2 \equiv \beta_0$, we have
\begin{eqnarray*}
    \ell &=& \frac{1}{2}\Big[\frac{(y-\langle \alpha,m\rangle)^2}{\beta_0} + \log \beta_0\Big],\\
    \nabla_{\theta}\ell &=& \begin{pmatrix}
        \frac{m}{\beta_0}(y- \langle \alpha,m\rangle)\\
        \frac{1}{2\beta_0}(1-\frac{(y - \langle \alpha,m\rangle)^2}{\beta_0})
    \end{pmatrix},\\
    \nabla_{\theta}^2\ell &=& \begin{pmatrix}
        \frac{mm\tran}{\beta_0} & \frac{y-\langle \alpha,m\rangle}{\beta_0^2}m\\
        \frac{y-\langle \alpha,m\rangle}{\beta_0^2}m\tran & \frac{1}{2\beta_0^2}\big[2\frac{(y-\langle \alpha,m\rangle)^2}{\beta_0}-1\big]
    \end{pmatrix},
\end{eqnarray*}
which implies that $\ell, \nabla_{\theta}\ell, \nabla_{\theta}^2\ell \in \sH(2,\tilde \chi,\tilde C)$.

\end{proof}

\subsection{Case 2: tvGARCH}

\textbf{The class $\sH^{mult}_{\iota}(M,\chi,\bar C)$}. 
    
    In the tvGARCH case, we will need a more specific structure of $\ell$ to obtain the results under weak moment assumptions. To make use of independencies occuring in the analysis of the tvGARCH likelihood, let us introduce the class $\sH^{mult}_{\iota}(M,\chi,\bar C)$ which consists of functions $g:\IR \times \IR^{\IN} \times \Theta$ such that $\sup_{\theta \in \Theta}\frac{|g(y,0,\theta)|}{1+|y|^{M}} \le \bar C$ and
\[
    \sup_{|\theta-\tilde \theta|_1 < \iota}\sup_y \sup_{x\not= x'}\frac{|g(y,x,\theta) - g(y,x',\theta)|}{|x-x'|_{\chi}( 1 + |x|_{\chi}^{M-1} + |x'|_{\chi}^{M-1})(1+|y|^M)} \le \bar C,
\]
\[
    \sup_{x,y}\sup_{\theta\not=\theta', |\theta - \tilde \theta|_1 < \iota, |\theta' - \tilde \theta|_1 < \iota} \frac{|g(y,x,\theta) - g(y,x,\theta')|}{|\theta - \theta'|_1 (1+|x|_{\chi}^{M})(1+|y|^{M})} \le \bar C.
\]

A slightly different set of assumptions (Assumption \ref{ass3}) which is specifically designed for conditional heteroscedastic models (leading to weaker moment assumptions)  is given now.

\begin{assumption}[Heteroscedastic recursively defined time series case]\label{ass3}
    Let $\zeta_i$, $i\in\IZ$ be an i.i.d. sequence. Assume that for any $t\in [0,1]$ it holds that
    \[
        \tilde Y_i(t) = F(\tilde X_i(t), \theta(t), \zeta_i), \quad i \in \IZ,
    \]
    where $F$ is some measurable function. Let
    \[
        \tilde \ell_{\tilde \theta}(y,x,\theta) := \ell(F(x,\tilde \theta,y),x,\theta).
    \]
    Suppose that for some $r \ge 2$,
     \begin{enumerate}[label=(A\arabic*'),ref=(A\arabic*')]
      \item\label{ass3_smooth} $\ell$ is twice continuously differentiable w.r.t. $\theta$. There exists $M \ge 1$ such that for each $s > 0$, there exist $\chi^{(s)} = (\chi^{(s)}_j)_{j=1,2,...}$ with $\chi_j^{(s)} = O(j^{-(1+\gamma)})$ and $\bar C^{(s)} > 0$, such that
        \begin{itemize}
            \item $\ell, \nabla_{\theta} \ell, \nabla_{\theta}^2 \ell \in \sH(2M(1+s),\chi^{(s)},\bar C^{(s)})$.
            \item \begin{equation}
           \hspace{-0.3 in}    \sup_{\theta}\sup_{z\not=z'}\frac{|\ell(z,\theta) - \ell(z',\theta)|}{|z-z'|_{\hat \chi^{(s)},s}^s(1+|z|_{\hat \chi}^{M} + |z'|_{\hat \chi}^{M}) + |z-z'|_{\hat \chi^{(s)}}(1+|z|_{\hat \chi}^{M-1} + |z|_{\hat \chi}^{M-1})^{1+s})} \le \bar C^{(s)}.\label{eq:additionalgarchlipschitz}
            \end{equation}
            \item There exists $\iota > 0$ such that  $\nabla_{\theta}\tilde \ell, \nabla_{\theta}^2\tilde \ell \in \sH^{mult}_{ \iota}(M(1+s),\chi^{(s)},\bar C^{(s)})$
        \end{itemize}
        \item\label{ass3_theta} \ref{ass1_theta} holds,
        \item\label{ass3_model} \ref{ass1_model} holds,
        \item \label{ass3_matrix} \ref{ass1_matrix} holds,
        \item\label{ass3_stat} \ref{ass1_stat} holds  and $\|\zeta_{0}\|_{rM} \le D$.
        \item\label{ass3_dep} $\sup_{t\in[0,1]}\delta_{rM}^{\tilde Y(t)}(k) = O(\rho^k)$ with some $\rho \in (0,1)$.
    \end{enumerate}

\end{assumption}

We now show that Assumption \ref{ass:case2} implies Assumption \ref{ass3}.

\begin{proposition}
    \label{example:garch}
    Let Assumption \ref{ass:case2} hold. Then Assumption \ref{ass3} is fulfilled with every $r = 2+\tilde a$, $\tilde a < \frac{a}{2}$ and $M=2$. It holds that $\Lambda(t) = I(t) = ((\IE \zeta_0^4 - 1)/2)V(t)$.
\end{proposition}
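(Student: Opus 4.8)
The plan is to follow the four-part structure of the proof of Proposition~\ref{example:tvrec}: produce the process together with its stationary approximation and the moment and weak-dependence bounds, verify the smoothness of $\ell$ in the specialized classes, identify the minimizer and compute the information matrices, and finally check the derivative-process conditions. Throughout I would recast the tvGARCH recursion in state-space form, collecting $\tilde\sigma_i(t)^2$ with the finitely many relevant lags of $\tilde Y$ and $\tilde\sigma^2$ into a state vector $\tilde V_i(t)$ obeying an affine random-coefficient recursion $\tilde V_i(t) = b(\theta(t)) + M_{i-1}(\theta(t))\tilde V_{i-1}(t)$ whose one-step matrix is exactly the $M_{i-1}(\theta)$ of Assumption~\ref{ass:case2}.

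\textbf{Existence, moments and dependence.} The strict inequality $\rho(\IE[M_0(\theta)^{\otimes 2}]) < 1$, uniform over the compact $\Theta$, is precisely the second-order contraction guaranteeing a unique stationary solution $\tilde V_i(t)$ with square-integrable $\tilde\sigma_0(t)^2$; as with the choice $\tilde a < a/2$, the strictness gives the extra margin of integrability. Combined with $\IE|\zeta_0|^{4+a} < \infty$, and using that $\tilde Y_0(t) = \tilde\sigma_0(t)^2\zeta_0^2$ with $\zeta_0$ independent of the past, this yields $\|\zeta_0\|_{rM} \le D$ (which consumes only $4+a$ moments of $\zeta$, since $rM = 4+2\tilde a < 4+a$) and the remaining moment and Lipschitz-in-$t$ bounds of \ref{ass3_stat}. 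The same contraction produces geometric decay of the functional dependence measure, $\sup_t \delta_{rM}^{\tilde Y(t)}(k) = O(\rho^k)$, which is \ref{ass3_dep}. I would cite the recursive-process framework of \citet{dahlhaus2017} and the tvGARCH analysis of \citet{tvgarch2013} for these facts and for the rescaling.

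\textbf{Smoothness of the likelihood.} This is the step I expect to be the main obstacle. Writing $s(x,\theta) := \sigma(x,\theta)^2 = \alpha_0 + \sum_{j=1}^m\alpha_j x_j + \sum_{j=1}^l\beta_j s(x_{j\rightarrow},\theta)$, I would first show $s \ge \alpha_{min}$ and that $s$, $\nabla_\theta s$, $\nabla_\theta^2 s$ and the coordinate derivatives $\partial_{x_l} s$ are well-defined, grow at most linearly in $x$, and --- crucially --- depend on $x_l$ only with a weight $\chi_l^{(s)}$ decaying geometrically in $l$, the decay being inherited from unfolding the contracting $\beta$-part of the recursion. Substituting these bounds into $\ell = \tfrac{1}{2}[y/s + \log s]$ and its $\theta$-derivatives verifies $\ell,\nabla_\theta\ell,\nabla_\theta^2\ell\in\sH_s(2M,2M,\chi^{(s)},\bar C^{(s)})$ and the extra estimate \reff{eq:additionalgarchlipschitz} with $M=2$. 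The reduction to only $4+a$ moments comes from the multiplicative structure: in $\tilde\ell_{\tilde\theta}(y,x,\theta) = \ell(\sigma(x,\tilde\theta)^2 y^2, x,\theta)$ the argument $y$ stands for the innovation and enters only through the ratio $\sigma(x,\tilde\theta)^2/\sigma(x,\theta)^2$, which is bounded once $|\theta-\tilde\theta|_1,|\theta'-\tilde\theta|_1 < \iota$, so $\tilde\ell$ grows like $|y|^2$; checking the two Lipschitz estimates defining $\sH^{mult}_{s,\iota}$ then gives $\nabla_\theta\tilde\ell,\nabla_\theta^2\tilde\ell\in\sH^{mult}_{s,\iota}(M,M,\chi^{(s)},\bar C^{(s)})$ with $M=2$, completing \ref{ass3_smooth}. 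The analogous bounds for $\nabla_\theta^3\tilde\ell$ and $\partial_{x_l}\nabla_\theta^2\tilde\ell$, together with the derivative-process estimate \ref{ass2_deriv} transported to the vector process via \citet{dahlhaus2017}, give \ref{ass4}.

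\textbf{Minimizer and information matrices.} For \ref{ass3_model} I would check that $\theta\mapsto L(t,\theta) = \IE\ell(\tilde Z_0(t),\theta)$ has the unique minimizer $\theta(t)$ by the standard quasi-likelihood argument ($u\mapsto u-\log u -1$ is minimized only at $u=1$, which forces $\sigma(\cdot,\theta)^2 = \sigma(\cdot,\theta(t))^2$ and hence $\theta = \theta(t)$ by identifiability of the recursion). For the matrix identity, set $g(t) := \nabla_\theta\tilde\sigma_0(t)^2/\tilde\sigma_0(t)^2$, a function of the past; evaluating at $(\tilde Z_0(t),\theta(t))$ gives $y/s = \zeta_0^2$, whence $\nabla_\theta\ell = \tfrac{1}{2}(1-\zeta_0^2)g(t)$ and $\nabla_\theta^2\ell = \tfrac{1}{2}(2\zeta_0^2-1)g(t)g(t)\tran + \tfrac{1}{2}(1-\zeta_0^2)s^{-1}\nabla_\theta^2 s$. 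Since $\zeta_0$ is independent of the past with $\IE\zeta_0^2 = 1$, the score is a martingale difference, so $\Lambda(t) = I(t)$; taking expectations gives $V(t) = \tfrac{1}{2}\IE[g(t)g(t)\tran]$ and $I(t) = \tfrac{1}{4}\IE[(1-\zeta_0^2)^2]\IE[g(t)g(t)\tran] = \tfrac{\IE\zeta_0^4-1}{4}\IE[g(t)g(t)\tran]$, that is $\Lambda(t) = I(t) = \tfrac{\IE\zeta_0^4-1}{2}V(t)$. Positive definiteness of $V(t)$, hence \ref{ass3_matrix}, follows from an identification argument: $v\tran\IE[g(t)g(t)\tran]v = 0$ forces $v\tran\nabla_\theta\tilde\sigma_0(t)^2 = 0$ almost surely, which the structure built into $\tilde\Theta$ rules out unless $v=0$.
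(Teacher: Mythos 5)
Your proposal is correct and follows essentially the same route as the paper's proof: the random-coefficient state-space representation with $\rho(\IE[M_0(\theta)^{\otimes 2}])<1$ for existence, moments and geometric decay of the dependence measure; the geometric expansion of $\sigma(x,\theta)^2$ with exponentially decaying coefficients for the smoothness classes; the quasi-likelihood argument for the unique minimizer; and the martingale-difference computation giving $\Lambda(t)=I(t)=\tfrac{\IE\zeta_0^4-1}{2}V(t)$. One small imprecision: the ratio $\sigma(x,\tilde\theta)^2/\sigma(x,\theta)^2$ is \emph{not} uniformly bounded for $|\theta-\tilde\theta|_1<\iota$, only bounded by $\bar C\bigl(1+|x|_{(\rho^{js})_j,s}^{s}\bigr)$ for arbitrarily small $s>0$ (this is the key estimate borrowed from the stationary GARCH literature), which is precisely why the classes $\sH_s$ and $\sH^{mult}_{s,\iota}$ carry the extra $(1+s)$-power — since you frame the whole verification in those classes anyway, the argument goes through once that sentence is corrected.
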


For the following proposition, let us introduce the following notation. For a random vector $v\in\IR^d$ and $q > 1$, we define $\|v\|_q := (\|v_j\|_q)_{j=1,...,d}$. Similar for a random matrix $A \in \IR^{d\times d}$, define $\|A\|_q := (\|A_{jk}\|_q)_{j,k=1,...,d}$. If $A,v$ are independent, then for any $j \in \{1,...,d\}$,
\begin{equation}
    (\|A v\|_q)_j = \Big\|\sum_{k=1}^{d}A_{jk}v_k\Big\|_q \le \sum_{k=1}^{d}\|A_{jk}v_k\|_q = \sum_{k=1}^{d}\|A_{jk}\|_q \|v_k\|_q = (\|A\|_q\cdot \|v\|_q)_j.\label{product_rule_qnorm}
\end{equation}
In particular, we write $\|Av\|_q \le \|v\|_q$ in this case which means that the inequality holds component-wise.

\begin{proof}[Proof of Proposition \ref{example:garch}] For $t \in [0,1]$, we abbreviate $M_i(t) := M_i(\theta(t))$. Since $\IE[|\varepsilon_0|^{4+a}] < \infty$ for some $a > 0$, the mapping $\Phi: [0,1]^2\times [1,1+\frac{a}{2})\to [0,\infty), (t,t',q) \mapsto \rho(\|M_0(t)\otimes M_0(t')\|_q)$ is continuous in each $(t,t',1)$ with $\Phi(t,t',1) = \rho(\IE[M_0(t)\otimes M_0(t')]) < 1$. Thus there exists $\tilde q > 1$ such that
\begin{equation}
    \sup_{t,t'\in[0,1]}\rho(\|M_0(t)\otimes M_0(t')\|_{\tilde q}) = \sup_{t,t'\in[0,1]}\Phi(t,t',\tilde q) < 1.\label{largest_eigenvalue_garch_bounded0}
\end{equation}
In particular, we have
\begin{equation}
    \sup_{t\in[0,1]}\rho(\|M_0(t)^{\otimes 2}\|_{\tilde q}) < 1.\label{largest_eigenvalue_garch_bounded1}
\end{equation}
Similarly, since $\IE[\zeta_1^4] \ge 1$, we conclude from $\sup_{t\in[0,1]}\rho(\IE[M_0(t)]) < 1$ that (w.l.o.g. with the same $\tilde q > 1$ as above)
\begin{equation}
    \sup_{t\in[0,1]}\rho(\|M_0(t)\otimes I\|_{\tilde q}) < 1.\label{largest_eigenvalue_garch_bounded2}
\end{equation}

Let $M = 1$. Fix $t \in [0,1]$. Consider the recursion of the corresponding stationary approximation
\begin{eqnarray}
    \tilde Y_i(t) &=& \tilde \sigma_i(t)^2 \zeta_i^2,\nonumber\\
    \tilde \sigma_i(t)^2 &=& \alpha_0(t) + \sum_{j=1}^{m}\alpha_j(t) \tilde Y_{i-j}(t) + \sum_{j=1}^{l}\beta_{j}(t)\tilde \sigma_{i-j}(t)^2.\label{eq:garchstat}
\end{eqnarray}
Define
\begin{eqnarray*}
    \tilde P_i(t) &:=& (\tilde Y_i(t),\ldots,\tilde Y_{i-m+1}(t),\tilde \sigma_{i}(t)^2,\ldots,\tilde \sigma_{i-l+1}(t)^2)\tran,\\
    a_i(t) &:=& (\alpha_0(t) \zeta_i^2,0,\ldots,0,\alpha_0(t),0,\ldots,0)\tran.
\end{eqnarray*}
For brevity, let $M_i(t) = M_i(\theta(t))$. Following Section 3.1 in \cite{existencegarch}, the model \reff{eq:garchstat} admits the representation
\begin{equation}
    \tilde P_i(t) = M_i(t) \tilde P_{i-1}(t) + a_i(t).\label{eq:garchstat2}
\end{equation}
Therefore, $\tilde P_i(t) = G_{\zeta_i}(\tilde P_{i-1}(t),t)$ with $G_{\zeta_i}(y,t) = M_i(t)\cdot y + a_i(t)$. Let $W_n(y,t) := G_{\zeta_n}(G_{\zeta_{n-1}}(...G_{\zeta_1}(y,t)...))$. Then we have
\[
    W_n(y,t) - W_n(y',t) = M_n(t) (W_n(y,t) - W_n(y',t)) = ... = M_{n-1}(t) \cdot ... \cdot M_1(t) \cdot (y-y').
\]
Using $(AB) \otimes (CD) = (A \otimes C)(B \otimes D)$, we obtain
\begin{equation}
    (W_n(y,t) - W_n(y',t))^{\otimes 2} = M_n(t)^{\otimes 2} (W_n(y,t) - W_n(y',t))^{\otimes 2} = M_{n-1}(t)^{\otimes 2} \cdot ... \cdot M_1(t)^{\otimes 2} \cdot (y-y')^{\otimes 2}\label{representation_garch_explicit_111}
\end{equation}
 Thus, we obtain from \reff{representation_garch_explicit_111} and \reff{product_rule_qnorm} that
\begin{eqnarray*}
    \|(W_n(y,t) - W_n(y',t))^{\otimes 2}\|_{\tilde q} &\le& \|M_{n-1}(t)^{\otimes 2}\|_{\tilde q} \cdot ... \|M_1(t)^{\otimes 2}\|_{\tilde q} (y-y')^{\otimes 2}\\
    &=& \big(\|M_1(t)^{\otimes 2}\|_{\tilde q}\big)^{n-1}\cdot (y-y')^{\otimes 2}.
\end{eqnarray*}
By \reff{largest_eigenvalue_garch_bounded1}, Theorem 2 in \cite{wushao2004} yields existence and a.s. uniqueness of $\tilde Y_i(t) = H(t,\sF_i)$, $\sup_{t\in[0,1]}\|\tilde Y_0(t)\|_{\tilde q} < \infty$ and $\sup_{t\in[0,1]}\delta_{\tilde q}^{\tilde Y(t)}(k) = \|\tilde Y_i(t) - \tilde Y_i(t)^{*}\|_{\tilde q} = O(c^k)$ for some $0<c<1$. 
This shows Assumption \ref{ass3}\ref{ass3_dep}.

We now aim to show \ref{ass3}\ref{ass3_stat}.
\reff{eq:garchstat2} implies the explicit representation
\begin{equation}
    \tilde P_i(t) = \sum_{k=0}^{\infty}\Big(\prod_{j=0}^{k-1}M_{i-j}(t)\Big) a_{i-k}(t).\label{eq:garchexplicit}
\end{equation}
We therefore have for $t,t'\in [0,1]$:
\begin{eqnarray*}
    \tilde P_i(t) - \tilde P_i(t') &=& \sum_{k=0}^{\infty}\sum_{l=0}^{k-1}C_{k,l} + \sum_{k=0}^{\infty}C_{k},
\end{eqnarray*}
where
\begin{eqnarray*}
    C_{k,l} &:=& \Big\{\Big(\prod_{j=0}^{l-1}M_{i-j}(t)\Big) \cdot \{M_{i-l}(t) - M_{i-l}(t')\}\cdot \Big(\prod_{j=l+1}^{k-1}M_{i-j}(t')\Big)\Big\}a_{i-k}(t)\\
    C_{k} &:=& \Big(\prod_{j=0}^{k-1}M_{i-j}(t')\Big)\cdot (a_{i-k}(t) - a_{i-k}(t')).
\end{eqnarray*}
We obtain that
\begin{eqnarray*}
    (\tilde P_i(t) - \tilde P_i(t'))^{\otimes 2} &=& \sum_{k,k'=0}^{\infty}\sum_{l=0}^{k-1}\sum_{l'=0}^{k'-1}C_{k,l}\otimes C_{k',l'} + \sum_{k,k'=0}^{\infty}\sum_{l=0}^{k-1}\{C_{k,l}\otimes C_{k'} + C_{k'} \otimes C_{k,l}\}\\
    &&\quad\quad\quad\quad + \sum_{k,k'=0}^{\infty}C_k \otimes C_{k'},
\end{eqnarray*}
in particular, it holds component-wise that
\begin{eqnarray}
    \|(\tilde P_i(t) - \tilde P_i(t'))^{\otimes 2}\|_{\tilde q} &\le& \sum_{k,k'=0}^{\infty}\sum_{l=0}^{k-1}\sum_{l'=0}^{k'-1}\|C_{k,l}\otimes C_{k',l'}\|_{\tilde q} + \sum_{k,k'=0}^{\infty}\sum_{l=0}^{k-1}\{\|C_{k,l}\otimes C_{k'}\|_{\tilde q} + \|C_{k'} \otimes C_{k,l}\|_{\tilde q}\}\nonumber\\
    &&\quad\quad\quad\quad + \sum_{k,k'=0}^{\infty}\|C_k \otimes C_{k'}\|_{\tilde q}.\label{largest_eigenvalue_garch_bounded_rep}
\end{eqnarray}
For the first summand in \reff{largest_eigenvalue_garch_bounded_rep}, we only investigate the case $l < l' < k < k'$. All other cases can be similar dealt with since similar terms arise. We have with $(AB)\otimes(CD) = (A \otimes C)(B\otimes D)$:
\begin{eqnarray*}
    C_{k,l}\otimes C_{k',l'} &=& \Big(\prod_{j=0}^{l-1}M_{i-j}(t)^{\otimes 2}\Big)\cdot \big(\{M_{i-l}(t) - M_{i-l}(t')\}\otimes M_{i-l}(t)\big)\cdot \Big(\prod_{j=l+1}^{l'-1}M_{i-j}(t')\otimes M_{i-j}(t)\Big)\\
    &&\quad\quad\quad\quad\times \big(M_{i-l'}(t')\otimes \{M_{i-l'}(t) - M_{i-l'}(t')\}\big)\cdot \Big(\prod_{j=l'+1}^{k-1}M_{i-j}(t')^{\otimes 2}\Big)\cdot \Big(\prod_{j=k}^{k'-1}M_{i-j}(t')\otimes I\Big)\\
    &&\quad\quad\quad\quad\times (a_{i-k}(t)\otimes a_{i-k'}(t))
\end{eqnarray*}
By independence and \reff{product_rule_qnorm}, it holds componentwise that
\begin{eqnarray*}
    \big\|C_{k,l}\otimes C_{k',l'}\big\|_{\tilde q} &\le& \|M_0(t)^{\otimes 2}\|_{\tilde q}^{l}\cdot \|\{M_{0}(t) - M_{0}(t')\}\otimes M_{0}(t)\|_{\tilde q}\cdot \|M_0(t')\otimes M_0(t)\|_{\tilde q}^{l'-l-1}\\
    &&\quad\quad\quad\quad\times \|M_{0}(t')\otimes \{M_{0}(t) - M_{0}(t')\}\|_{\tilde q}\cdot \|M_0(t')^{\otimes 2}\|_{\tilde q}^{k-l'-1}\cdot \|M_{0}(t')\otimes I\|_{\tilde q}^{k'-k}\\
    &&\quad\quad\quad\quad\times \|a_{0}(t) \otimes a_0(t)\|_{\tilde q}.
\end{eqnarray*}
By Lemma \ref{lemma_matrixnorm_householder} and \reff{largest_eigenvalue_garch_bounded0}, \reff{largest_eigenvalue_garch_bounded1}, \reff{largest_eigenvalue_garch_bounded2}, we obtain that with some $\tilde \rho \in (0,1)$ and some constant $\tilde c > 0$,
\begin{eqnarray*}
    \big|\big\|C_{k,l}\otimes C_{k',l'}\big\|_{\tilde q}\big|_1 &\le& \tilde c\cdot \tilde \rho^l \cdot \big| \|\{M_{0}(t) - M_{0}(t')\}\otimes M_{0}(t)\|_{\tilde q}\big|_1 \cdot \tilde \rho^{l'-l-1}\\
    &&\quad\quad\quad\quad\times\big|\|M_{0}(t')\otimes \{M_{0}(t) - M_{0}(t')\}\|_{\tilde q}\big|_1\cdot \tilde \rho^{k-l'-1}\cdot \tilde \rho^{k'-k}\\
    &=& \tilde c\cdot \tilde \rho^{k'-2}\cdot \|\{M_{0}(t) - M_{0}(t')\}\otimes M_{0}(t)\|_{\tilde q}\big|_1\cdot \big|\|M_{0}(t')\otimes \{M_{0}(t) - M_{0}(t')\}\|_{\tilde q}\big|_1
\end{eqnarray*}
By Lipschitz-continuity of $\theta(\cdot)$ and $\IE[|\zeta_1|^{4+a}] < \infty$, the Cauchy-Schwarz inequality yields
\begin{equation}
    \big|\|\{M_{0}(t) - M_{0}(t')\}\otimes M_{0}(t)\|_{\tilde q}\big|_1 \le \big|\big\|M_{0}(t) - M_{0}(t')\big\|_{2 \tilde q}\big|_1\cdot \big| \|M_{0}(t)\|_{2\tilde q}\big|_1 \le C\cdot |t-t'|\label{largest_eigenvalue_garch_bounded_rep_sum1_eq1}
\end{equation}
with some constant $C > 0$. A similar result for $\big|\|M_{0}(t')\otimes \{M_{0}(t) - M_{0}(t')\}\|_{\tilde q}\big|_1$ implies
\[
    \big|\big\|C_{k,l}\otimes C_{k',l'}\big\|_{\tilde q}\big|_1 \le \tilde c\cdot \tilde \rho^{k'-2}\cdot C^2\cdot  |t-t'|^2.
\]
We therefore have
\begin{equation}
    \Big|\sum_{0 \le l < l' < k < k' < \infty}\|C_{k,l}\otimes C_{k',l'}\|_{\tilde q}\Big|_1 \le \tilde c \cdot C^2 |t-t'|^2 \cdot \sum_{k'=0}^{\infty}(k')^3\cdot \tilde \rho^{k'-2}.\label{largest_eigenvalue_garch_bounded_rep_sum1}
\end{equation}
For the third summand in \reff{largest_eigenvalue_garch_bounded_rep}, we will only investigate the case $k < k'$. Here we have
\[
    C_{k}\otimes C_{k'} = \Big(\prod_{j=0}^{k-1}M_{i-j}(t')^{\otimes 2}\Big)\cdot \Big(\prod_{j=k}^{k'-1}\{M_{i-j}(t')\otimes I\}\Big)\cdot \{(a_{i-k}(t) - a_{i-k}(t'))\otimes (a_{i-k'}(t) - a_{i-k'}(t'))\}.
\]
As before we conclude that with some $\tilde c > 0, \tilde \rho \in (0,1)$ it holds that 
\[
    \big| \big\|C_{k} \otimes C_{k'}\big\|_{\tilde q}\big|_1 \le \tilde c\cdot \tilde \rho^{k}\cdot \tilde \rho^{k'-k}\cdot \big| \big\|(a_{0}(t) - a_{0}(t'))\otimes (a_{0}(t) - a_{0}(t'))\big\|_{\tilde q}\big|_1.
\]
By Lipschitz continuity of $\theta(\cdot)$ and $\IE[|\zeta_1|^{4+a}] < \infty$, the Cauchy-Schwarz inequality yields with some constant $C > 0$ that
\[
    \big| \big\|(a_{0}(t) - a_{0}(t'))\otimes (a_{0}(t) - a_{0}(t'))\big\|_{\tilde q}\big|_1 \le \big| \|a_{0}(t) - a_{0}(t')\|_{2\tilde q}\big|_1^2 \le C\cdot |t-t'|.
\]
We obtain that
\[
    \big| \big\|C_{k} \otimes C_{k'}\big\|_{\tilde q}\big|_1 \le \tilde c\cdot \tilde \rho^{k'}\cdot C^2 \cdot |t-t'|^2,
\]
and thus
\begin{equation}
    \Big|\sum_{0 \le k < k' < \infty}\big\|C_{k} \otimes C_{k'}\big\|_{\tilde q}\Big|_1 \le \tilde c C^2 |t-t'|^2\cdot \sum_{k'=0}^{\infty}k' \tilde \rho^{k'}.\label{largest_eigenvalue_garch_bounded_rep_sum3}
\end{equation}
The second summand in \reff{largest_eigenvalue_garch_bounded_rep} can be similar dealt with as the first and the third summand. We obtain from \reff{largest_eigenvalue_garch_bounded_rep_sum1} and \reff{largest_eigenvalue_garch_bounded_rep_sum3} that there exists some constant $\tilde c' > 0$ such that
\[
    \big| \|(\tilde P_i(t) - \tilde P_i(t'))^{\otimes 2}\|_{\tilde q}\big|_1 \le \tilde c'\cdot |t-t'|^2.
\]
Since $\tilde P_i(t)$ contains $\tilde Y_i(t)$ as its first element, we obtain in particular that
\[
    \|\tilde Y_i(t) - \tilde Y_i(t')\|_{2\tilde q}^2 \le \tilde c'|t-t'|^2,
\]
that is, the second part of  \ref{ass3}\ref{ass3_stat}, equation \reff{eq:ass1_stat1}.

We now discuss the first part of \ref{ass3}\ref{ass3_stat}, equation \reff{eq:ass1_stat1}. Let
\[
    P_i := (Y_i,...,Y_{i-m+1},\sigma_i^2,...,\sigma_{i-l+1}^2)\tran,
\]
then it holds that
\[
    P_i = M_i(i/n)\cdot P_{i-1} + a_i(i/n), \quad i = 1,...,n,
\]
with $P_0 = \tilde P_0(0)$ by definition. Thus $P_i$ is well-defined and $\|Y_i\|_{2\tilde q} < \infty$ exists. We therefore have the representation
\[
    P_i = \sum_{k=0}^{\infty}\Big(\prod_{j=0}^{k-1}M_i(\frac{i-j}{n} \vee 0)\Big)\cdot a_{i-k}(\frac{i-k}{n} \vee 0),
\]
which leads to
\begin{eqnarray*}
    (P_i - \tilde P_i(i/n))^{\otimes 2} &=& \Big[\sum_{k=0}^{\infty}\Big\{\Big(\prod_{j=0}^{k-1}M_{i-j}(\frac{i-j}{n} \vee 0)\Big) - \Big(\prod_{j=0}^{k-1}M_{i-j}(\frac{i}{n})\Big)\Big\}a_{i-k}(\frac{i-k}{n} \vee 0)\\
    &&\quad\quad\quad\quad + \sum_{k=0}^{\infty}\Big(\prod_{j=0}^{k-1}M_{i-j}(\frac{i}{n})\Big)\{a_{i-k}(\frac{i-k}{n} \vee 0) - a_{i-k}(\frac{i}{n})\}\Big]^{\otimes 2}\\
    &=& \sum_{k,k'=0}^{\infty}\sum_{l=0}^{k-1}\sum_{l'=0}^{k'-1}D_{k,l}\otimes D_{k',l'} + \sum_{k,k'=0}^{\infty}\sum_{l=0}^{k-1}\{D_{k,l}\otimes D_{k'} + D_{k'} \otimes D_{k,l}\}\\
    &&\quad\quad\quad\quad + \sum_{k,k'=0}^{\infty}D_k \otimes D_{k'},
\end{eqnarray*}
where
\begin{eqnarray*}
    D_{k,l} &:=& \Big\{\Big(\prod_{j=0}^{l-1}M_{i-j}(\frac{i-j}{n}\vee 0)\Big) \cdot \{M_{i-l}(\frac{i-l}{n}\vee 0) - M_{i-l}(\frac{i}{n})\}\cdot \Big(\prod_{j=l+1}^{k-1}M_{i-j}(\frac{i}{n})\Big)\Big\}a_{i-k}(\frac{i-k}{n} \vee 0)\\
    D_{k} &:=& \Big(\prod_{j=0}^{k-1}M_{i-j}(\frac{i-j}{n})\Big)\cdot (a_{i-k}(\frac{i-k}{n} \vee 0) - a_{i-k}(\frac{i}{n})).
\end{eqnarray*}
As in \reff{largest_eigenvalue_garch_bounded_rep}, we have component-wise that 
\begin{eqnarray}
    \|(P_i - \tilde P_i(\frac{i}{n}))^{\otimes 2}\|_{\tilde q} &\le& \sum_{k,k'=0}^{\infty}\sum_{l=0}^{k-1}\sum_{l'=0}^{k'-1}\|D_{k,l}\otimes D_{k',l'}\|_{\tilde q} + \sum_{k,k'=0}^{\infty}\sum_{l=0}^{k-1}\{\|D_{k,l}\otimes D_{k'}\|_{\tilde q} + \|D_{k'} \otimes D_{k,l}\|_{\tilde q}\}\nonumber\\
    &&\quad\quad\quad\quad + \sum_{k,k'=0}^{\infty}\|D_k \otimes D_{k'}\|_{\tilde q}.\label{largest_eigenvalue_garch_bounded_rep_local}
\end{eqnarray}
Since \reff{largest_eigenvalue_garch_bounded_rep_local} is nearly of the same structure as \reff{largest_eigenvalue_garch_bounded_rep}, we only discuss the new aspect coming in on the first summand of \reff{largest_eigenvalue_garch_bounded_rep_local} for $l < l' < k < k'$. We have
\begin{eqnarray*}
    &&D_{k,l}\otimes D_{k',l'}\\
    &=& \Big(\prod_{j=0}^{l-1}M_{i-j}(\frac{i-j}{n}\vee 0)^{\otimes 2}\Big)\cdot \big(\{M_{i-l}(\frac{i-l}{n}\vee 0) - M_{i-l}(\frac{i}{n})\}\otimes M_{i-l}(\frac{i}{n})\big)\\
    &&\quad\quad\quad\quad\times\Big(\prod_{j=l+1}^{l'-1}M_{i-j}(\frac{i}{n})\otimes M_{i-j}(\frac{i-j}{n}\vee 0)\Big)\cdot \big(M_{i-l'}(\frac{i}{n})\otimes \{M_{i-l'}(\frac{i-l'}{n}\vee 0) - M_{i-l'}(\frac{i}{n})\}\big)\\
    &&\quad\quad\quad\quad\times\Big(\prod_{j=l'+1}^{k-1}M_{i-j}(\frac{i}{n})^{\otimes 2}\Big)\cdot \Big(\prod_{j=k}^{k'-1}M_{i-j}(\frac{i}{n})\otimes I\Big)\\
    &&\quad\quad\quad\quad\times (a_{i-k}(\frac{i-k}{n}\vee 0)\otimes a_{i-k'}(\frac{i-k'}{n}\vee 0)).
\end{eqnarray*}
Using \reff{product_rule_qnorm} and independence, we obtain component-wise that
\begin{eqnarray}
    \big\|D_{k,l}\otimes D_{k',l'}\big\|_{\tilde q} &\le& \Big(\prod_{j=0}^{l-1}\|M_0(\frac{i-j}{n}\vee 0)^{\otimes 2}\|_{\tilde q}\Big)\cdot \|\{M_{0}(\frac{i-l}{n}\vee 0) - M_{0}(\frac{i}{n})\}\otimes M_{0}(\frac{i}{n})\|_{\tilde q}\nonumber\\
    &&\quad\quad\quad\quad\times\Big(\prod_{j=l+1}^{l'-1}\|M_0(\frac{i}{n})\otimes M_0(\frac{i-j}{n}\vee 0)\|_{\tilde q}\Big)\nonumber\\
    &&\quad\quad\quad\quad\times \|M_{0}(\frac{i}{n})\otimes \{M_{0}(\frac{i-l'}{n}\vee 0) - M_{0}(\frac{i}{n})\}\|_{\tilde q}\cdot \|M_0(\frac{i}{n})^{\otimes 2}\|_{\tilde q}^{k-l'-1}\nonumber\\
    &&\quad\quad\quad\quad\times \|M_{0}(\frac{i}{n})\otimes I\|_{\tilde q}^{k'-k} \cdot \big\|a_{0}(\frac{i-k}{n}) \otimes a_0(\frac{i-k'}{n})\big\|_{\tilde q}.\label{largest_eigenvalue_garch_bounded_rep_local_eq1}
\end{eqnarray}
Define $A(t,t') := \|M_0(t)\otimes M_0(t')\|_{\tilde q}$. From \reff{largest_eigenvalue_garch_bounded0}, we have $\rho_2 := \sup_{t,t' \in [0,1]}\rho(A(t,t')) < 1$. We can apply a straightforward generalization of Lemma \ref{lemma_matrixnorm} (with $[0,1]^2$ as domain of definition for $A$ instead of $[0,1]$) to obtain that there exists a finite set of invertible matrices $M_1,...,M_L$ and a finite partition $[0,1]^2 = \bigcup_{k=1}^{L}I_k$ into rectangles $I_k \subset [0,1]^2$ such that for $(t,t') \in I_k$,
\[
    |A(t,t')|_{M_k} \le \frac{1+\rho_2}{2} < 1.
\]
Similar as in the proof of Lemma \ref{lemma_randomiterativemodel}, equation \reff{lemma_randomiterativemodel_eq1} therein, this shows that there exists a constant $C = C(L) > 0$ such that
\[
    \Big|\prod_{j=0}^{l-1}\|M_0(\frac{i-j}{n}\vee 0)^{\otimes 2}\|_{\tilde q}\Big|_1 \le C(L)\cdot \big(\frac{1+\rho_2}{2}\big)^{l},\quad\quad \Big|\|M_0(\frac{i}{n})^{\otimes 2}\|_{\tilde q}^{k-l'-1}\Big|_1 \le C(L)\cdot \big(\frac{1+\rho_2}{2}\big)^{k-l'-1},
\]
and
\[
    \Big|\prod_{j=l+1}^{l'-1}\|M_0(\frac{i}{n})\otimes M_0(\frac{i-j}{n}\vee 0)\|_{\tilde q}\Big|_1 \le C(L)\cdot \big(\frac{1+\rho_2}{2}\big)^{l'-l-1},
\]
and as an implication of  \reff{largest_eigenvalue_garch_bounded2}, 
\[
    \Big|\|M_{0}(\frac{i}{n})\otimes I\|_{\tilde q}^{k'-k}\Big|_1 \le C(L)\cdot \big(\frac{1+\rho_2}{2}\big)^{k'-k}.
\]
Putting these results into \reff{largest_eigenvalue_garch_bounded_rep_local_eq1} yields
\begin{eqnarray}
    \big|\big\|D_{k,l} \otimes D_{k',l'}\big\|_{\tilde q}\big|_1 &\le& C(L)^4 \big(\frac{1+\rho_2}{2}\big)^{k'-2}\cdot \big|\|\{M_{0}(\frac{i-l}{n}\vee 0) - M_{0}(\frac{i}{n})\}\otimes M_{0}(\frac{i}{n})\|_{\tilde q}\big|_1\nonumber\\
    &&\quad\quad\quad\quad\times \big| \|M_{0}(\frac{i}{n})\otimes \{M_{0}(\frac{i-l'}{n}\vee 0) - M_{0}(\frac{i}{n})\}\|_{\tilde q}\big|_1\nonumber\\
    &&\quad\quad\quad\quad\times \big|\big\|a_{0}(\frac{i-k}{n}) \otimes a_0(\frac{i-k'}{n})\big\|_{\tilde q} \big|_1.\label{largest_eigenvalue_garch_bounded_rep_local_eq2}
\end{eqnarray}
As in \reff{largest_eigenvalue_garch_bounded_rep_sum1_eq1}, we obtain
\begin{eqnarray*}
    \big|\|\{M_{0}(\frac{i-l}{n}\vee 0) - M_{0}(\frac{i}{n})\}\otimes M_{0}(\frac{i}{n})\|_{\tilde q}\big|_1 &\le& C\cdot \frac{l}{n},\\
    \big| \|M_{0}(\frac{i}{n})\otimes \{M_{0}(\frac{i-l'}{n}\vee 0) - M_{0}(\frac{i}{n})\}\|_{\tilde q}\big|_1 &\le& C\cdot \frac{l'}{n}
\end{eqnarray*}
with some constant $C > 0$ independent of $i,l',l,k,k',n$, and
\[
    \big|\big\|a_{0}(\frac{i-k}{n}) \otimes a_0(\frac{i-k'}{n})\big\|_{\tilde q} \big|_1 \le C.
\]
Insertion into \reff{largest_eigenvalue_garch_bounded_rep_local_eq2} and using $l <l' < k < k'$ yields
\[
    \big|\big\|D_{k,l} \otimes D_{k',l'}\big\|_{\tilde q}\big|_1 \le C(L)^4 C^3 \big(\frac{1+\rho_2}{2}\big)^{k'-2}\cdot \frac{(k')^2}{n^2},
\]
thus
\[
    \Big|\sum_{0 \le l < l' < k < k' < \infty}\big\|D_{k,l} \otimes D_{k',l'}\big\|_{\tilde q}\Big|_1 \le \frac{C(L)^4 C^3}{n^2}\cdot \sum_{k'=0}^{\infty}\big(\frac{1+\rho_2}{2}\big)^{k'-2}\cdot (k')^5.
\]
Similar calculations for all other possibilities and summands in \reff{largest_eigenvalue_garch_bounded_rep_local} show that
\[
    \big|\big\|(\tilde P_i - \tilde P_i(i/n))^{\otimes 2}\big\|_{\tilde q}\big|_1 \le \frac{D}{n}
\]
with some $D > 0$ independent of $i,n$. We conclude that
\[
    \big\|Y_i - \tilde Y_i(i/n)\big\|_{\tilde q} \le \frac{D}{n},
\]
which shows the first part of \ref{ass3}\ref{ass3_stat}, equation \reff{eq:ass1_stat1}.

Let $\Sigma(x,\theta) := (\sigma(x,\theta)^2,\ldots,\sigma(x_{(l-1)\rightarrow},\theta)^2)\tran$ and $A(x,\theta) := (\alpha_0 + \sum_{j=1}^{m}\alpha_j x_{j},\ldots,\alpha_0 + \sum_{j=1}^{m}\alpha_j x_{j+l-1})\tran$, and
\[
    B(\theta) = \begin{pmatrix}
        \beta_1 & \dots & \dots & \dots & \beta_l\\
        1 & 0 & \dots & \dots & 0\\
        0 & \ddots & \ddots &  & \vdots\\
        \vdots & \ddots & \ddots & \ddots & 0\\
        0 & \dots & 0 & 1 & 0
    \end{pmatrix}.
\]
As said in Theorem 2.1 in \cite{momentgarch}, $\rho(\IE M_0(\theta)^{\otimes 2}) < 1$ is a necessary and sufficient condition for the corresponding GARCH process with parameters $\theta$ to have 4-th moments. We conclude that $\rho(\IE M_0(\theta)) < 1$ which by Proposition 1 in \cite{garch2004} implies $\rho(B(\theta)) < 1$. We have the explicit representation
\begin{equation}
    \sigma(x,\theta)^2 = \sum_{k=0}^{\infty}\big(B(\theta)^k A(x_{k\rightarrow},\theta)\big)_1.\label{garch:sigma_explicit}
\end{equation}
Since $A(0,\theta) = (\alpha_0,0,...,0)\tran$, we have
\[
\sigma(0,\theta)^2 = \alpha_0\sum_{k=0}^{\infty}(B(\theta)^k)_{11}.
\]
From \reff{garch:sigma_explicit} we also obtain that
\begin{equation}
        \sigma(x,\theta)^2 = c_0(\theta) + \sum_{j=1}^{\infty}c_j(\theta)\cdot x_{j},\label{garch:sigma_explicit2}
    \end{equation}
    where $c_j(\theta) \ge 0$ satisfies
    \begin{equation}
        \sup_{\theta\in\Theta}|c_j(\theta)| \le C \cdot \rho^j\label{eq:garchgeometricdecaycoefficients}
    \end{equation}
    with some $\rho \in (0,1)$ and $c_0(\theta) \ge \sigma_{min}^2 > 0$ (due to $\alpha_0 \ge \alpha_{min} > 0$). Due to the explicit representation \reff{garch:sigma_explicit} with geometrically decaying summands, it is easy to see that $\sigma(x,\theta)^2$ is four times continuously differentiable w.r.t. $\theta$ with
    \begin{equation}
        \nabla_{\theta}^k(\sigma(x,\theta)^2) = \nabla_{\theta}^k c_0(\theta) + \sum_{j=1}^{\infty}\nabla_{\theta}^k c_j(\theta)\cdot x_{j},\quad\quad k \in \{0,1,2,3,4\},\label{eq:explicitsigmarepresentation}
    \end{equation}
    where $(\nabla_{\theta}^k c_j(\theta))_{j}$ is still geometrically decaying with $\sup_{\theta \in \Theta}|\nabla_{\theta}^k c_j(\theta)|_{\infty} \le C \cdot \rho^j$, say.\\
    
    From \reff{eq:explicitsigmarepresentation} we conclude that (component-wise) for $k = 0,1,2,3$:
    \begin{eqnarray}
        |\nabla_{\theta}^k(\sigma(x,\theta)^2) - \nabla_{\theta}^k(\sigma(x',\theta)^2)| &\le& C |x - x'|_{(\rho^j)_j,1},\nonumber\\
        \label{eq:sigmagarch_xdiff}\\
        |\nabla_{\theta}^k(\sigma(x,\theta)^2) - \nabla_{\theta}^k(\sigma(x,\theta')^2)| &\le& |\theta - \theta'|_1 \cdot \sup_{\theta \in \Theta}|\nabla_{\theta}^{k+1}(\sigma(x,\theta)^2)|_{\infty} \le C |\theta - \theta'|_1 \cdot |x|_{(\rho^j)_j,1}.\nonumber\\\label{eq:sigmagarch_thetadiff}
    \end{eqnarray}
    
    We obtain that $\ell(y,x,\theta)$ is four times continuously differentiable and
\begin{eqnarray*}
    \ell(y,x,\theta) &=& \frac{1}{2}\Big(\frac{y}{\sigma(x,\theta)^2} + \log(\sigma(x,\theta)^2)\Big),\\
    \nabla_{\theta}\ell(y,x,\theta) &=& \frac{\nabla_{\theta}(\sigma(x,\theta)^2)}{2\sigma(x,\theta)^2}\Big(1-\frac{y}{\sigma(x,\theta)^2}\Big),\\
    \nabla_{\theta}^2 \ell(y,x,\theta) &=& \Big[-\frac{\nabla_{\theta}(\sigma(x,\theta)^2) \nabla_{\theta}(\sigma(x,\theta)^2)\tran}{2 \sigma(x,\theta)^4} +\frac{\nabla_{\theta}^2(\sigma(x,\theta)^2)}{2\sigma(x,\theta)^2} \Big]\Big(1-\frac{y}{\sigma(x,\theta)^2}\Big)\\
    &&\quad\quad\quad + \frac{\nabla_{\theta}(\sigma(x,\theta)^2) \nabla_{\theta}(\sigma(x,\theta)^2)\tran}{2\sigma(x,\theta)^4}\cdot \frac{y}{\sigma(x,\theta)^2}.
\end{eqnarray*}
It was shown in the proof of Theorem 2.1 in \cite{garch2004}, that $\theta \mapsto L(t,\theta) = \IE \ell(\tilde Z_0(t),\theta)$ is uniquely minimized in $\theta = \theta(t)$, which shows Assumption \ref{ass3}\ref{ass3_model}. As in the proof of Proposition \ref{example:tvrec}, we obtain that
\begin{eqnarray*}
    V(t) = \IE\big[\frac{\nabla_{\theta} (\sigma(\tilde X_0(t),\theta(t))^2) \nabla_{\theta}(\sigma(\tilde X_0(t),\theta(t))^2)\tran}{2\sigma(\tilde X_0(t),\theta(t))^4}\big] = I(t) \frac{2}{\IE \zeta_0^4-1}.
\end{eqnarray*}
Furthermore,
\[
    \nabla_{\theta}\ell(\tilde Z_i(t),\theta(t)) = \frac{\nabla_{\theta}(\sigma(\tilde X_i(t),\theta(t))^2)}{2\sigma(\tilde X_i(t),\theta(t))^2}\{1 - \zeta_i^2\},
\]
which shows that $\nabla_{\theta}\ell(\tilde Z_i(t),\theta(t))$ is a martingale difference sequence w.r.t. $\sF_i$. Thus $\Lambda(t) = I(t)$. It was shown in the proof of Theorem 2.2 in \cite{garch2004} that $V(t)$ is positive definite for each $t \in [0,1]$. By continuity, we conclude that Assumption \ref{ass3}\ref{ass3_matrix} is fulfilled.

\textbf{Proof of Assumption \ref{ass3}\ref{ass3_smooth}:} It holds that
    \[
        2|\ell(y,x,\theta) - \ell(y',x',\theta)| \le |y-y'|\cdot \frac{1}{\sigma(x,\theta)^2} + |y'|\cdot \Big|\frac{1}{\sigma(x,\theta)^2} - \frac{1}{\sigma(x',\theta)^2}\Big| + |\log( \sigma(x,\theta)^2) - \log(\sigma(x',\theta)^2)|.
    \]
    Since $\sigma(x,\theta)^2 \ge \sigma_{min}^2 > 0$, Lipschitz continuity of $\log$ on $[\sigma_{min},\infty)$ and \reff{eq:garchgeometricdecaycoefficients}, there exists some constant $C' > 0$ such that
    \begin{equation}
       2|\ell(y,x,\theta) - \ell(y',x',\theta)| \le C'(|y-y'| + |x-x'|_{(\rho^j)_j,1}) + |y'|\cdot \Big|\frac{1}{\sigma(x,\theta)^2} - \frac{1}{\sigma(x',\theta)^2}\Big|.\label{eq:garchlikelihood_upperbound}
    \end{equation}
    Note that
    \begin{eqnarray*}
        && \Big|\frac{1}{\sigma(x,\theta)^2} - \frac{1}{\sigma(x',\theta)^2}\Big| \le \frac{\sum_{j=0}^{\infty}c_j(\theta)|x_j . x_j'|}{\sigma(x,\theta)^2 \sigma(x',\theta)^2} \le \sum_{j=1}^{\infty}\frac{c_j(\theta)|x_j - x_j'|}{(\sigma_{min}^2 + c_j(\theta) x_j)(\sigma_{min}^2 + c_j(\theta) x_j')}\\
        &\le& \frac{1}{\sigma_{min}^2}\sum_{j=1}^{\infty}\frac{c_j(\theta)|x_j - x_j'|}{\sigma_{min}^2 + c_j(\theta) |x_j - x_j'|.}
    \end{eqnarray*}
    The last step holds due to the following argument: It holds either $|x_j - x_j'| \le x_j$ or $|x_j - x_j'| \le x_j'$ since $x_j,x_j' \ge 0$. Therefore, one factor in the denominator can be lower bounded by $\sigma_{min}$ and the other one by $\sigma_{min} + c_j(\theta) |x_j - x_j'|$.
Following the ideas of \cite{garch2004}, for arbitrarily small $s > 0$ we use the inequality $\frac{x}{1+x} \le x^s$ to obtain
    \begin{eqnarray*}
        && \Big|\frac{1}{\sigma(x,\theta)^2} - \frac{1}{\sigma(x',\theta)^2}\Big| \le \frac{1}{\sigma_{min}^{4+2s}}\sum_{j=1}^{\infty}c_j(\theta)^s|x_j - x_j'|^s \le \frac{C}{\sigma_{min}^{4+2s}}|x-x|_{(\rho^{js})_j,s}^s
    \end{eqnarray*}
Together with \reff{eq:garchlikelihood_upperbound}, we obtain \reff{eq:additionalgarchlipschitz}.

Using directly \reff{eq:garchlikelihood_upperbound} and \reff{eq:sigmagarch_xdiff}, we obtain
\[
    \sup_{\theta \in \Theta}\sup_{z\not=z'}\frac{|\ell(z,\theta) - \ell(z',\theta)|}{|z-z'|_{(\rho^j)_j,1}\cdot (1 + |z|_{(\rho^j)_j}^{2M-1} + |z'|_{(\rho^j)_j}^{2M-1})} < \infty.
\]
Note that with some constant $C' > 0$,
\begin{eqnarray*}
    2|\ell(z,\theta) - \ell(z,\theta')| &\le& |y|\cdot \Big[\frac{1}{\sigma(x,\theta)^2} - \frac{1}{\sigma(x,\theta')^2}\Big] + | \log(\sigma(x,\theta)^2) - \log(\sigma(x,\theta')^2)|\\
    &\le& C' (1+|y|)\cdot |\sigma(x,\theta)^2 - \sigma(x,\theta')^2|.
\end{eqnarray*}
Together with \reff{eq:sigmagarch_thetadiff}, we obtain
\[
    \sup_{\theta\not=\theta'}\sup_{z}\frac{|\ell(z,\theta) - \ell(z,\theta')|}{|\theta-\theta'|_{1}\cdot (1 + |z|_{(\rho^j)_j}^{2M} + |z'|_{(\rho^j)_j}^{2M})} < \infty.
\]
This shows $\ell \in \sH(2M, (\rho^j)_j,\bar C)$ with some suitably chosen $\bar C > 0$.

Let $s > 0$  be arbitrary. It was shown in \cite{garch2004}, (4.25) therein that with some small $\iota > 0$ only depending on $s,\Theta$, it holds that
\begin{equation}
    \sup_{|\tilde \theta - \theta| < \iota}\frac{\sigma(x,\tilde \theta)^2}{\sigma(x,\theta)^2} \le \bar C(1+ |x|_{(\rho^{js})_j,s}^s).\label{eq:garchsigmaproperty1}
\end{equation}
Similarly, one can obtain for $k = 1,2,3$ that
\begin{equation}
    \sup_{|\tilde \theta - \theta| < \iota}\frac{\nabla_{\theta}^k \sigma(x,\tilde \theta)^2}{\sigma(x,\theta)^2} \le \bar C(1+ |x|_{(\rho^{js})_j,s}^s).\label{eq:garchsigmaproperty2}
\end{equation}
In the following we show that $\nabla_{\theta}\ell \in \sH(2M(1+s),(\rho^j)_j,\bar C)$ with some suitably chosen $\bar C > 0$. We have (component-wise):
\begin{eqnarray*}
    && 2|\nabla_{\theta}\ell(y,x,\theta) - \nabla_{\theta}\ell(y',x',\theta)|\\
    &\le& |y - y'| \cdot \frac{1}{\sigma_{min}^2}\frac{|\nabla_{\theta}(\sigma(x,\theta)^2)|}{\sigma(x,\theta)^2} + |y'|\cdot \Big|\frac{1}{\sigma(x,\theta)^2} - \frac{1}{\sigma(x',\theta)}\Big| \cdot \frac{|\nabla_{\theta}(\sigma(x,\theta)^2)|}{\sigma(x,\theta)^2}\\
    &&\quad + \Big(1 + \frac{|y'|}{\sigma_{min}^2}\Big)\cdot \Big(\frac{|\nabla_{\theta}(\sigma(x,\theta)^2) - \nabla_{\theta}(\sigma(x',\theta)^2)|}{\sigma_{min}^2} + \frac{|\nabla_{\theta}(\sigma(x',\theta)^2)|}{\sigma(x',\theta)^2\sigma_{min}^2}|\sigma(x,\theta)^2 - \sigma(x,\theta')^2|\Big).
\end{eqnarray*}
Using \reff{eq:sigmagarch_xdiff} and \reff{eq:garchsigmaproperty2}, we obtain (component-wise) with some suitably chosen $\bar C > 0$:
\begin{equation}
    2|\nabla_{\theta}\ell(y,x,\theta) - \nabla_{\theta}\ell(y',x',\theta)| \le \bar C |z-z'|_{(\rho^j)_j,1}\cdot (1 + |z|_{(\rho^j)_j}^{2M-1} + |z'|_{(\rho^j)_j}^{2M-1})^{1+s}.\label{eq:garchprooflikelihood1}
\end{equation}
We have (component-wise):
\begin{eqnarray*}
    && 2|\nabla_{\theta}\ell(z,\theta) - \nabla_{\theta}\ell(z,\theta')|\\
    &\le& |y|\cdot \Big|\frac{1}{\sigma(x,\theta)^2} - \frac{1}{\sigma(x,\theta')}\Big| \cdot \frac{|\nabla_{\theta}(\sigma(x,\theta)^2)|}{\sigma(x,\theta)^2}\\
    &&\quad + \Big(1 + \frac{|y|}{\sigma_{min}^2}\Big)\cdot \Big(\frac{|\nabla_{\theta}(\sigma(x,\theta)^2) - \nabla_{\theta}(\sigma(x,\theta')^2)|}{\sigma_{min}^2} + \frac{|\nabla_{\theta}(\sigma(x,\theta')^2)|}{\sigma(x,\theta')^2\sigma_{min}^2}|\sigma(x,\theta)^2 - \sigma(x,\theta')^2|\Big).
\end{eqnarray*}
Using \reff{eq:sigmagarch_xdiff} and \reff{eq:garchsigmaproperty2}, we obtain (component-wise) with some suitably chosen $\bar C > 0$:
\begin{equation}
    2|\nabla_{\theta}\ell(z,\theta) - \nabla_{\theta}\ell(z,\theta')| \le \bar C |\theta - \theta'|_1 \cdot (1 + |z|_{(\rho^j)_j}^{2M} + |z'|_{(\rho^j)_j}^{2M})^{1+s}.\label{eq:garchprooflikelihood2}
\end{equation}
We conclude from \reff{eq:garchprooflikelihood1} and \reff{eq:garchprooflikelihood2} that $\nabla_{\theta}\ell \in \sH(2M(1+s),(\rho^j)_j,\bar C)$.
The proof for $\nabla_{\theta}^2\ell$ is similar in view of \reff{eq:sigmagarch_xdiff}, \reff{eq:sigmagarch_thetadiff} and  \reff{eq:garchsigmaproperty2} and therefore omitted.

Let $s > 0$ be arbitrary and $\iota > 0$ such that \reff{eq:garchsigmaproperty1} and \reff{eq:garchsigmaproperty2} hold. In the following we show that $\nabla_{\theta}\tilde \ell \in \sH_{\iota}^{mult}(M(1+s),(\rho^j)_j,\bar C)$ with some suitable chosen $\bar C > 0$. It holds that
\[
    \nabla_{\theta}\tilde \ell_{\tilde \theta}(y,x,\theta) = \frac{\nabla_{\theta}(\sigma(x,\theta)^2)}{2\sigma(x,\theta)^2}\Big(1-y\frac{\sigma(x,\tilde \theta)^2}{\sigma(x,\theta)^2}\Big).
\]
We have for $|\theta - \tilde \theta|_1 < \iota$:
\begin{eqnarray*}
    && 2|\nabla_{\theta}\tilde \ell_{\tilde \theta}(y,x,\theta) - \nabla_{\theta}\tilde \ell_{\tilde \theta}(y,x',\theta)|\\
    &\le& |y|\cdot \Big[\frac{|\sigma(x,\tilde \theta)^2 - \sigma(x',\tilde \theta)^2|}{\sigma_{min}^2} + \frac{\sigma(x',\tilde \theta)^2}{\sigma(x',\theta)^2 \sigma_{min}^2}|\sigma(x,\theta)^2 - \sigma(x',\theta)^2|\Big] \cdot \frac{|\nabla_{\theta}(\sigma(x,\theta)^2)|}{\sigma(x,\theta)^2}\\
    &&\quad + \Big(1 + |y|\cdot \frac{\sigma(x',\tilde \theta)^2}{\sigma(x',\theta)^2}\Big)\cdot \Big(\frac{|\nabla_{\theta}(\sigma(x,\theta)^2) - \nabla_{\theta}(\sigma(x',\theta)^2)|}{\sigma_{min}^2} + \frac{|\nabla_{\theta}(\sigma(x',\theta)^2)|}{\sigma(x',\theta)^2\sigma_{min}^2}|\sigma(x,\theta)^2 - \sigma(x,\theta')^2|\Big).
\end{eqnarray*}
Using \reff{eq:sigmagarch_xdiff} and \reff{eq:garchsigmaproperty2}, we obtain (component-wise) with some suitably chosen $\bar C > 0$:
\begin{equation}
    2|\nabla_{\theta}\tilde \ell_{\tilde \theta}(y,x,\theta) - \nabla_{\theta}\tilde \ell_{\tilde \theta}(y,x',\theta)| \le \bar C (1+|y|)\cdot |x-x'|_{(\rho^j)_j,1} \cdot |x|_{(\rho^{js})_j,s}^s.\label{eq:garchprooflikelihood3}
\end{equation}
We have for $|\theta - \tilde \theta|_1, |\theta' - \tilde \theta|_1 < \iota$:
\begin{eqnarray*}
    && 2|\nabla_{\theta}\tilde \ell_{\tilde \theta}(y,x,\theta) - \nabla_{\theta}\tilde \ell_{\tilde \theta}(y,x,\theta')|\\
    &\le& |y|\cdot \frac{\sigma(x,\tilde \theta)^2}{\sigma(x,\theta)^2 \sigma_{min}^2}|\sigma(x,\theta)^2 - \sigma(x,\theta')^2| \cdot \frac{|\nabla_{\theta}(\sigma(x,\theta)^2)|}{\sigma(x,\theta)^2}\\
    &&\quad + \Big(1 + |y|\cdot \frac{\sigma(x,\tilde \theta)^2}{\sigma(x,\theta')^2}\Big)\cdot \Big(\frac{|\nabla_{\theta}(\sigma(x,\theta)^2) - \nabla_{\theta}(\sigma(x,\theta')^2)|}{\sigma_{min}^2} + \frac{|\nabla_{\theta}(\sigma(x,\theta')^2)|}{\sigma(x,\theta')^2\sigma_{min}^2}|\sigma(x,\theta)^2 - \sigma(x,\theta')^2|\Big).
\end{eqnarray*}
Using \reff{eq:sigmagarch_thetadiff} and \reff{eq:garchsigmaproperty2}, we obtain (component-wise) with some suitably chosen $\bar C > 0$:
\begin{equation}
    2|\nabla_{\theta}\tilde \ell_{\tilde \theta}(y,x,\theta) - \nabla_{\theta}\tilde \ell_{\tilde \theta}(y,x,\theta')| \le \bar C (1+|y|)\cdot |\theta-\theta'|_{1} \cdot (1+|x|_{(\rho^j)_j}^{M}) \cdot |x|_{(\rho^{js})_j,s}^s.\label{eq:garchprooflikelihood4}
\end{equation}
We conclude from \reff{eq:garchprooflikelihood3} and \reff{eq:garchprooflikelihood4} that $\nabla_{\theta}\tilde\ell \in \sH_{\iota}^{mult}(M(1+s),(\rho^j)_j,\bar C)$.
The proof for $\nabla_{\theta}^2\tilde\ell$ is similar in view of \reff{eq:sigmagarch_xdiff}, \reff{eq:sigmagarch_thetadiff} and  \reff{eq:garchsigmaproperty2} and therefore omitted.

\end{proof}

\end{document}